\newcommand{\sC}{\mathscr{C}}
\newcommand{\sS}{\mathscr{S}}
\newcommand{\cA}{\mathcal{A}}
\newcommand{\cD}{\mathcal{D}}
\newcommand{\cF}{\mathcal{F}}
\newcommand{\cO}{\mathcal{O}}
\newcommand{\cS}{\mathcal{S}}
\newcommand{\cT}{\mathcal{T}}
\newcommand{\mN}{\mathbb{N}}
\newcommand{\mZ}{\mathbb{Z}}
\newcommand{\mR}{\mathbb{R}}
\newcommand{\mC}{\mathbb{C}}
\newcommand{\mG}{\mathbb{G}}
\newcommand{\mS}{\mathbb{S}}
 \newcommand{\be}{\ensuremath{\boldsymbol{e}}}
 \newcommand{\bb}{\ensuremath{\boldsymbol{b}}}
 \newcommand{\bt}{\ensuremath{\boldsymbol{t}}}
 \newcommand{\bj}{\ensuremath{\boldsymbol{j}}}
 \newcommand{\bx}{\ensuremath{\boldsymbol{x}}}
 \newcommand{\by}{\ensuremath{\boldsymbol{y}}}
 \newcommand{\ba}{\ensuremath{\boldsymbol{a}}}
  \newcommand{\bo}{\ensuremath{\boldsymbol{o}}}
 \newcommand{\bn}{\ensuremath{\boldsymbol{n}}}
 \newcommand{\bxi}{\ensuremath{\boldsymbol{\xi}}}
 \newcommand{\bdelta}{\ensuremath{\boldsymbol{\delta}}}
\newcommand{\tsigma}{\ensuremath{\tilde \sigma}}
\newcommand{\Text}[1]{\text{\textnormal{#1}}}
\newcommand{\D}{\Text{d}}
\newcommand{\I}{\Text{i}}
\newcommand{\E}{\Text{e}}
\newcommand{\MTEXT}[1]{\;\;\;\;\;\text{#1}\;\;\;\;\;}
\newcommand{\FA}{\MTEXT{for all}}
\newcommand{\closure}[2][3]{%
  {}\mkern#1mu\overline{\mkern-#1mu#2}}
\newcommand{\cc}[1]{{#1}^\ast}
\newcommand{\compl}[1]{#1^\textup{c}}
\newcommand{\F}{\lower0.9ex\hbox{$\mathchar'26$}\mkern-8mu \tNF}
\newcommand{\tNF}{\mathfrak{f}} 
\newcommand{\tNFa}[1]{\mathfrak{f}_{#1}} 
\newcommand{\mF}{m_{\mathfrak{f}}}
\newcommand{\mFa}[1]{m_{\mathfrak{f},#1}}
\newcommand{\nF}{n_{\mathfrak{f}}}
\newcommand{\kF}{k_{\mathfrak{f}}}
\newcommand{\sF}{\tilde \theta}
\newcommand{\eFa}[1]{\tilde \eta_{#1}}
\newcommand{\eFd}{\eFa{\tNFa {\Delta}}}
\newcommand{\iFd}{\tilde \iota_{\tNFa {\Delta}}}
\newcommand{\iFda}[1]{\tilde \iota_{\tNFa {\Delta}, #1}}
\newcommand{\SquareDom}{[-\frac 1 2; \frac 1 2]^m}
\newcommand{\SquareDomMin}[1]{[-\frac 1 2 + #1; \frac 1 2 - #1]^m}
\newcommand{\HIGHLIGHT}[1]{{#1}}	
\renewcommand{\Re}{\textup{Re}}
\newcommand{\tot}{{\textup{tot}}}
\newcommand{\sym}{{\textup{sym}}}
\newcommand{\obs}{{\textup{obs}}}
\newcommand{\leak}{{\textup{leak}}}
\newcommand{\stab}{{\textup{stab}}}
\newcommand{\low}{\textup{low}}
\newcommand{\band}{\textup{band}}
\newcommand{\real}{\textup{real}}
\newcommand{\BSpacea}[4]{\mathfrak{B}_{#1,#2,#3}^{#4}}
\newcommand{\BSpacem}[1]{\BSpacea{k}{r}{\bo}{#1}}
\newcommand{\BSpace}{\BSpacem{m}}
\newcommand{\CBand}{C_{\band}}
\newcommand{\CStab}{C_{\stab}}
\newcommand{\CRiesz}[1]{C_{\textup{Riesz},#1}}
\newcommand{\Csym}{C_{\textup{sym}}}
\newcommand{\Csyma}[1]{C_{\textup{sym}, #1}}
\newcommand{\OneD}{\textup{(1d)}}
\newcommand{\sfrac}[2]{{\textstyle \frac{#1}{#2}}}
\newcommand{\ssum}{{\textstyle \sum}}
\renewcommand{\L}{\ell}
\newcommand{\R}{{\textup r}}
\newcommand{\norm}[1]{\| #1 \|} 
\newcommand{\abs}[1]{| #1 |}
\newcommand{\parens}[1]{(#1 )}
\newcommand{\ceil}[1]{\lceil#1\rceil}
\newcommand{\ip}[2]{\langle #1, #2\rangle} 
\newcommand{\Norm}[1]{\left\| #1 \right\|} 
\newcommand{\Abs}[1]{\left| #1 \right|}
\newcommand{\Parens}[1]{\left(#1 \right)}
\newcommand{\Ip}[2]{\left\langle #1, #2\right\rangle} 
\newcommand{\bnorm}[1]{\big\| #1 \big\|} 
\newcommand{\babs}[1]{\big| #1 \big|}
\newcommand{\bparens}[1]{\big(#1 \big)}
\newcommand{\bip}[2]{\big\langle #1, #2\big\rangle} 
\newcommand{\Babs}[1]{\Big| #1 \Big|}
\newcommand{\Bparens}[1]{\Big(#1 \Big)}
\newcommand{\bbabs}[1]{\bigg| #1 \bigg|}
\newcommand{\bbparens}[1]{\bigg(#1 \bigg)}
\newcommand{\secref}[1]{$\S$\ref{#1}}
\newcommand{\nnl}{\nonumber \\}
\DeclareMathOperator*{\supp}{supp}
\DeclareMathOperator*{\dist}{dist}
\DeclareMathOperator*{\erfc}{erfc}
\DeclareMathOperator*{\sinc}{sinc}
\newtheorem{MyIP}{Inverse Problem}
\newtheorem{cor}[theorem]{Corollary}
\newtheorem{rem}[theorem]{Remark}
\newtheorem{example}[theorem]{Example}
\newtheorem{result}[theorem]{Result}
\numberwithin{theorem}{section}
\numberwithin{equation}{section}
\newcommand{\mylabel}[2]{#2\def\@currentlabel{#2}\label{#1}}
\crefname{MyIP}{IP}{IPs}
\Crefname{MyIP}{IP}{IPs}
\begin{document}

\title{{Locality estimates for Fresnel-wave-propagation and stability of X-ray \HIGHLIGHT{phase contrast imaging} with finite detectors}\thanks{Submitted to the editors of Inverse Problems on May 16, 2018.\funding{This work was funded by Deutsche Forschungsgemeinschaft DFG through Project C02
of SFB 755 - Nanoscale Photonic Imaging.}}}

\author{Simon Maretzke\thanks{Institute for Numerical and Applied Mathematics, University of Goettingen, Lotzestrasse 16-18, 37083 G\"ottingen, Germany (\email{simon.maretzke@googlemail.com}).}} 

\headers{Locality estimates for Fresnel-wave-propagation}{Simon Maretzke}



\maketitle

\begin{abstract}
 Coherent wave-propagation in the \HIGHLIGHT{near-field Fresnel-regime} is the underlying contrast-mechanism to \HIGHLIGHT{(propagation-based) X-ray  phase contrast imaging} (\HIGHLIGHT{XPCI}), an emerging lensless technique that enables 2D- and 3D-imaging of biological soft tissues and other light-element samples down to nanometer-resolutions. Mathematically, propagation is described by the Fresnel-propagator, a convolution with an arbitrarily non-local kernel. As real-world detectors may only capture a finite field-of-view, this non-locality implies that the recorded diffraction-patterns are necessarily incomplete. This raises the question of stability of image-reconstruction from the truncated data -- even if the complex-valued wave-field, and not just its modulus, could be measured. Contrary to the latter restriction of the acquisition, known as the phase-problem, the finite-detector-problem has not received much attention in literature.
 The present work therefore analyzes locality of Fresnel-propagation in order to establish stability of \HIGHLIGHT{XPCI} with finite detectors. Image-reconstruction is shown to be severely ill-posed in this setting -- even without a phase-problem. However, quantitative estimates of the leaked wave-field reveal that Lipschitz-stability holds down to a sharp resolution limit that depends on the detector-size and varies within the field-of-view. The smallest resolvable lengthscale is found to be $\approx \! 1/\F$ times the detector's aspect length, where $\F$ is the Fresnel number associated with the latter scale.
 The stability results are extended to phaseless imaging in the linear contrast-transfer-function regime. 
\end{abstract}

\begin{keywords}
Fresnel propagation, image reconstruction, stability, resolution, X-ray imaging, phase contrast 
\end{keywords}

\begin{AMS}
65R32, 92C55, 94A08 78A45, 78A46
\end{AMS}

\section{Introduction}

State-of-the-art high-resolution imaging techniques are a driving force behind current biomedical- and material science. Among such, \HIGHLIGHT{ (propagation-based) X-ray phase contrast imaging} (\HIGHLIGHT{XPCI}), also known as near-field holography, stands out as it yields two- or three-dimensional images down to nanometer-resolutions with high penetration-depths at relatively low radiation-dose and sample-preparation requirements  \cite{Wilkins1996,Pogany1997noninterferometric,Paganin1998,Cloetens1999holotomography,Bartels2015,MaretzkeEtAl2016OptExpr,Krenkel2017SingleCellHolotomography,Hagemann2017ResolutionExposureHoloVSCDI}.

 	\begin{figure}[hbt!]
 	 \centering
 	 \includegraphics[width=.8\textwidth]{./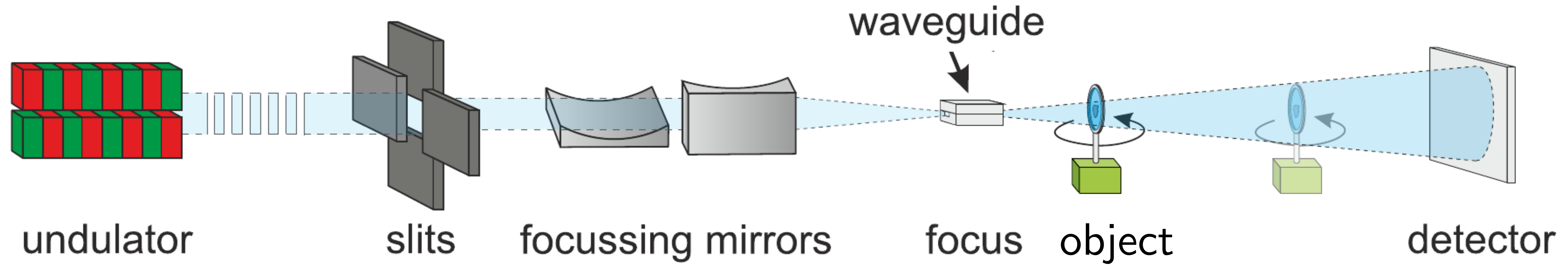}
 	 \caption{Exemplary experimental setup for (propagation-based) X-ray phase contrast imaging (XPCI) at synchrotrons (sketch of the GINIX-experiment \cite{Salditt2015GINIX} at P10-beamline, DESY).}
 	 \label{fig:setup_real}
 	\end{figure}

The  setup of \HIGHLIGHT{XPCI} is appealingly simple, see the example sketched in \cref{fig:setup_real}: essentially, it boils down to a coherent X-ray beam illuminating an unknown object and a detector that records the resulting near-field diffraction pattern, also termed \emph{hologram}, at a finite distance behind the sample. 
The coherent wave-propagation from the sample to the detector, described by the Helmholtz equation in the \emph{paraxial-} of \emph{Fresnel}-approximation 
\cite{Jonas2004TwoMeasUniquePhaseRetr,PaganinXRay}, is essential as it enables \emph{phase-contrast}: it partially encodes phase-shifts in the complex-valued X-ray wave-field $\Psi$ induced by refraction within the sample into measurable wave intensities $\propto |\Psi|^2$, thereby circumventing the well-known \emph{phase problem}, i.e.\ the inability to measure the phase of $\Psi$ directly. 
This permits imaging of biological soft tissues and other light-element samples, for which the absorption of X-rays -- but not refraction -- is negligible \cite{Pogany1997noninterferometric}.

To obtain an interpretable image of the sample, the induced phase-shifts (and absorption) have to be reconstructed from the measured hologram(s), i.e.\ an inverse problem has to be solved. By the limitation of the data to the squared modulus $|\Psi|^2$, this   requires to recover the missing phase-information. For the present setting, however, this task is comparably well-understood by now and routinely solved using data from multiple sample-detector-distances  along with a linearization of the contrast known as the contrast-transfer-function (CTF) model \cite{Cloetens1999holotomography,Turner2004FormulaWeakAbsSlowlyVarPhase,Krenkel2014BCAandCTF,Krenkel2017SingleCellHolotomography} and/or additional a priori knowledge on the recovered images \cite{Bartels2012,Bartels2015,PeinEtAl2016HoloWithShearlets,MaretzkeEtAl2016OptExpr}. Indeed, it is shown in previous work that the mild assumption of a known compact support of the image ensures well-posedness of the reconstruction in the linear CTF-regime \cite{MaretzkeHohage2017SIAM}.

What is typically tacitly ignored, however, is the data-incompleteness arising from the finiteness of the field-of-view captured by the detector due to the de-localizing action of (Fresnel-)wave-propagation: existing theory mostly assumes data within the complete infinite detector-plane and most reconstruction methods implicitly assume periodic detector-boundaries, possibly combined with artificial extension of the data by padding. While this produces reasonable results in practice, theoretical understanding for the effects of a finite detector and of the associated heuristic corrections is lacking.

This work aims to close this gap of theory by deriving rigorous estimates on the locality of information-transport by  wave-propagation in the Fresnel-regime with the ultimate goal of extending existing stability estimates for \HIGHLIGHT{XPCI} to settings with finite detectors. In particular, the focus is on the question of \emph{resolution}:
\vspace{.5em}
\begin{addmargin}[2em]{2em}
\emph{Given an \HIGHLIGHT{XPCI} setup, what is the size of the smallest sample-features that can be stably reconstructed from the measured data?}
\end{addmargin}
\vspace{.5em}
\HIGHLIGHT{In physics literature \cite{Nugent2010coherent,LatychenskaiaEtAl2012_HoloMeetsCDI(withHoloResolution)}, authors typically refer to Abbe's diffraction limit \cite{BornOptics,LipsonEtAl2010_OpticalPhysics}, defining the resolution via the {numerical aperture} associated with the detector-size. However, the underlying reasoning is heuristic and motivated by \emph{far-field} optics, despite the  {near-field} setting of the imaging technique. Rigorous theory is thus necessary to supplement physical intuition.}

The manuscript is organized as follows: \secref{S:Background} introduces the mathematical setting and notation as well as some preliminary insights  on the finite-detector problem. In \secref{S:GaussObservations}, the relation between resolution and detector-size is assessed by the study of Gaussian wave-packets, yielding \emph{best-case} estimates in some sense. These are then complemented by \emph{worst-case} estimates on stability of image-reconstruction derived in \secref{S:BoundsComplex}, \secref{S:Splines} and \secref{S:Real} under different a priori assumptions on the unknown objects. Having derived all of these results under the simplifying assumption that also the phase of the data is measured, the obtained locality- and stability estimates are then extended to the phaseless case of linearized \HIGHLIGHT{XPCI} in \secref{S:Phaseless}. \secref{S:Conclusions} concludes this work.

Despite the focus on \HIGHLIGHT{XPCI}, note that the derived estimates may be extended to a wide range of wave-propagation problems from classical physics and quantum-mechanics.

\vspace{.5em}

\section{Background} \label{S:Background}

\subsection{Basic setting} \label{SS2.0}

\subsubsection{Fresnel-propagation} \label{SSS:MathParaxProp} We consider the problem of reconstructing a function $h : \mR^m \to \mC$  from 
partial knowledge of  Fresnel-data:
\begin{align}
   \cD (h) &:= \cF \left( \mF \cdot \cF^{-1} ( h ) \right) \MTEXT{with}    \mF(\bxi) := \exp \left( - {\I \bxi^2 }/{(2 \tNF)}  \right), \quad \tNF > 0, \, \bxi \in \mR^m. \label{eq:DefFresnelProp} %
\end{align}
 $\cF (f)(\bxi) := (2\pi)^{-m/2} \int_{\mR^m} \exp(-\I \bxi \cdot \bx) f(\bx) \, \D \bx$ denotes the $m$-dimensional Fourier transform.
The \emph{Fresnel-propagator} $\cD$ models the free-space propagation of time-harmonic wave-fields $ \Psi(\bx, z) =  \tilde \Psi(\bx, z) \exp( \I \tNF z )$ with slowly varying envelope $\tilde \Psi$ within the regime of the paraxial Helmholtz-equation (see  e.g.\ \cite{PaganinXRay} for details):
\begin{align}
 \begin{cases} 
  \Parens{ 2 \I \tNF \partial_z + \Delta _{\bx} } \tilde \Psi(\bx, z) = 0  &(\bx,z) \in \mR^m \times (0;1) \\
    \tilde \Psi(\bx, 0) = h(\bx) &\bx \in \mR^m 
 \end{cases}
\quad \Rightarrow \quad \tilde \Psi(\cdot, 1) = \cD \Parens{ \tilde \Psi(\cdot, 0) }  \label{eq:ParaxialHelmholtz}
\end{align}

 	\begin{figure}[hbt!]
 	 \centering
 	 \includegraphics[width=.8\textwidth]{./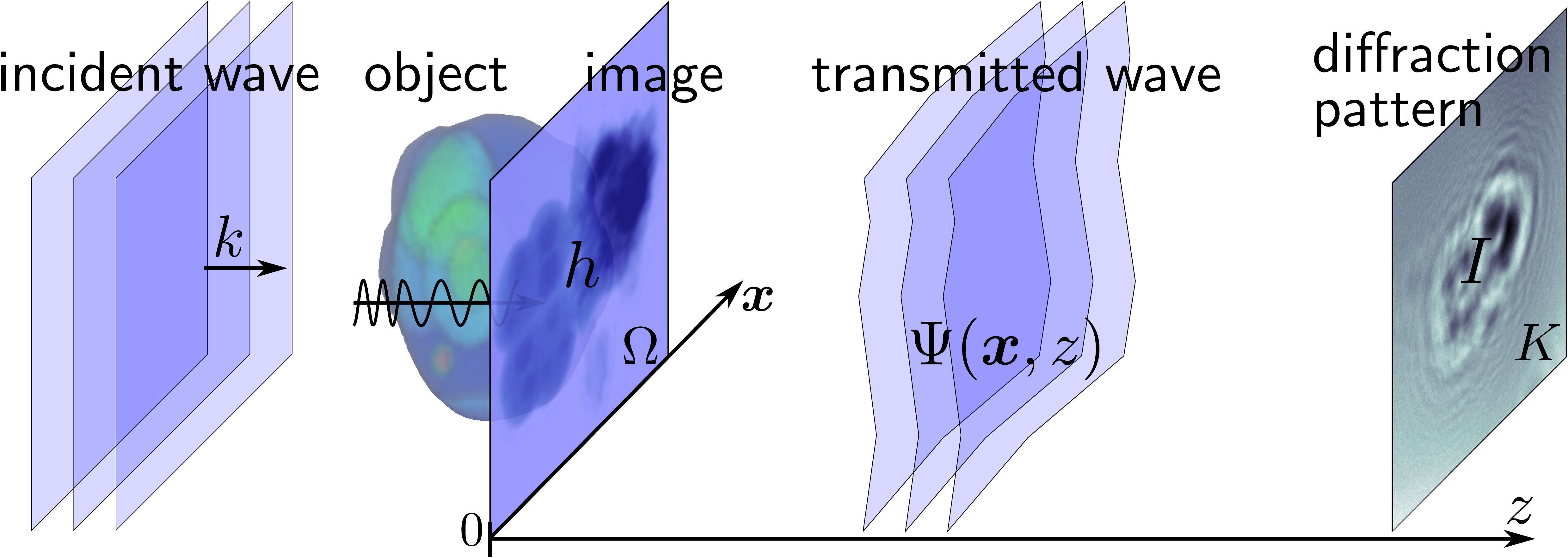}
 	 \caption{Schematic model of \HIGHLIGHT{propagation-based XPCI}: incident plane waves are scattered by a sample, imprinting phase-shifts and absorption $h =-\I \phi - \mu$ upon the transmitted wave-field $\Psi(\cdot,0)$ within the \emph{object-domain} $\Omega$. The intensity of the resulting near-field diffraction pattern $I= |\Psi(\cdot, d)|^2 $ is recorded within the \emph{detection-domain} $K$ at some distance behind the object.} 
 	 \label{fig:setup_sketch}
 	\end{figure}

\subsubsection{Forward models} \label{SSS:MathForwardModels}  
As detailed in \cite{Pogany1997noninterferometric,Jonas2004TwoMeasUniquePhaseRetr,PaganinXRay,MaretzkeHohage2017SIAM}, Fresnel-diffraction data arises in X-ray \HIGHLIGHT{ phase contrast imaging} (\HIGHLIGHT{XPCI}): if the incident beam in \cref{fig:setup_real} is modeled by a plane wave as sketched in \cref{fig:setup_sketch}, the wave-field in the object's exit-plane $z = 0$ is 
  \begin{align}
  \tilde \Psi(\cdot,0) &=   \exp(h)  \MTEXT{with} h = -\I \phi - \mu =- \I k \int_{\mR} \big( \delta - \I \beta \big)  \D z,  \label{eq:PhysModel}
\end{align}
 (within some standard approximations of X-ray optics \cite{Jonas2004TwoMeasUniquePhaseRetr}), where $n(\bx,z) = 1- \delta(\bx,z) + \I \beta(\bx, z)$ is the spatially varying refractive index of the sample. The complex-valued \emph{image} $h$ is thus a projection of the sample-characterizing quantities $\delta, \beta$. As the wave-field in the detector-plane relates to $\tilde \Psi(\cdot,0)$ via Fresnel-propagation, the detected intensities are given by
\begin{align}
 I(\bx) = \babs{ \cD\bparens{ \tilde \Psi(\cdot,0) } (\bx) }^2 = \Abs{ \cD\bparens{ \exp(h) } (\bx) }^2.  \label{eq:NLModel}
\end{align}

Under the additional assumption that the object is sufficiently weakly scattering for the image to be ``small'' in a suitable sense, \eqref{eq:NLModel} may be linearized:
\begin{align}
  I = 1 + \cT(h) + \cO( h ^ 2 ) \MTEXT{with} \cT(h) = 2 \Re \parens{ \cD( h ) }, \label{eq:LinModel}
\end{align}
where $\Re$ is the pointwise real part. In Fourier-space, the contrast in the phase-shifts $\phi$ and attenuation $\mu$ is then described by oscillatory \emph{contrast-transfer-functions} (CTF):
\begin{align}
 \cF\bparens{ \cT(-\I\phi-\mu) } (\bxi ) = -2 \sin \left( |\bxi|^2 / (2\tNF) \right) \cF(\phi)(\bxi )  -2 \cos \left( |\bxi|^2 / (2\tNF) \right) \cF(\mu)(\bxi )
\end{align}
for all $\bxi \in \mR^m$. Therefore, the linearized \HIGHLIGHT{XPCI}-model is also termed CTF-model.

Furthermore, it is often  assumed \cite{Paganin2002simultaneous,Turner2004FormulaWeakAbsSlowlyVarPhase} that the object is \emph{homogeneous}, in the sense that 
refraction and absorption
are proportional: $h =  -  \mu - \I \phi = -\I \E^{-\I \alpha} \varphi$
  for some $\alpha \in [0;\pi)$ and a  \emph{real-valued} function $\varphi$. In the linearized case, this yields a modified CTF-model: 
  \begin{align}
  \cS_\alpha ( \varphi) :=  \cT(h) =  -2 \cF^{-1} \left( s_\alpha \cdot \cF (\varphi) \right), \quad s_\alpha(\bxi ) := \sin \left(|\bxi|^2 / (2\tNF)  + \alpha \right)\label{eq:fwModel2}
\end{align}
 The case $\alpha = 0$ corresponds to \emph{pure phase objects} which induce negligible absorption $\mu \approx 0$. 
  
  \vspace{.5em}
\subsubsection{Object- and detection-domains} \label{SSS:MathObjDetDomains} We assume that the approximate size of the imaged object is known a priori. Then there exists a bounded  \emph{object-domain} $\Omega \subset \mR^m$ such that the unknown image $h$ satisfies $\supp(h) = \closure{ \{ \bx \in \mR^m : h(\bx) \neq 0 \} } \subset \Omega$, where the overbar denotes set-closure. We consider the $L^2$-functions  satisfying this \emph{support-constraint}:
\begin{align}
 h \in L^2 (\Omega) := \big\{ h: \mR^m \to \mC : \supp(h ) \subset \Omega, \, {\textstyle \int_{\mR^m} |h|^2\, \D \bx } < \infty \big\}.
\end{align}
Throughout this work, $\ip f g := \int_{\mR^m} f(\bx) g(\bx)^\ast \, \D \bx$ and $\norm{h} := \ip h h ^{1/2}$ refer to the inner-product and norm in the space of square-integrable functions $L^2(\mR^m)$.

Contrary to most previous work, we account for the fact that real-world detectors may only record data 
within a bounded \emph{detection-domain} $K \subsetneq \mR^m$, also referred to as \emph{field-of-view} (\emph{FoV}) or simply \emph{detector}. 
Thus, only \emph{restrictions} $I|_K$ of the intensity-data in \eqref{eq:PhysModel}, defined by $I|_K(\bx) = I(\bx)$ for $\bx \in K$ and $I|_K(\bx) = 0$ otherwise, are available. By considering continuous measurements, however, we neglect that detectors are composed of \emph{discrete} pixels.

For the \HIGHLIGHT{XPCI}-setting, a two-dimensional square detector $K = [-\frac 1 2; \frac 1 2]^2$ is certainly of highest practical relevance. By the analysis in \cite{Kostenko2013_AllAtOncePCTWithTV,Ruhlandt2016_RadonPCICommute}, however, also the case $m = 3$ is of interest as it arises in a linearized model of \emph{tomographic} imaging. Moreover, $m = 3$ is also the natural dimension for an alternate application from quantum-mechanics:
\vspace{.5em}
\begin{rem}[Application in quantum mechanics] \label{rem:Schroedinger}
 The paraxial Helmholtz equation in \eqref{eq:ParaxialHelmholtz} is equivalent to the time-dependent Schr\"odinger-equation for a free electron if $z$ is identified with the time-dimension. Accordingly, all results of this work can be interpreted in view of the question how much probability-mass of a \HIGHLIGHT{quantum-mechanical} wave-function, initially localized in $\Omega \subset \mR^m$, leaks out of some domain $K \subset \mR^m$ upon time-propagation.
\end{rem}
\vspace{.5em}
\noindent Therefore, the analysis is carried out independently of the dimension $m$ as far as possible.

\vspace{.5em}
\subsubsection{Fresnel number(s)} \label{SSS:MathFresnelNumber} The dimensionless parameter $\tNF$ in \eqref{eq:DefFresnelProp} is the (modified) Fresnel number of the imaging setup (related to the classically defined Fresnel number $\F$ by a convenient $2\pi$-factor: $\tNF= 2\pi \F$). It is defined as $\tNF = k b^2/d$, where $k$ is the wavenumber of the incident plane-wave in \cref{fig:setup_sketch}, $d$ is the distance between object- and detector-plane and $b$ is the physical length that corresponds to unity in the dimensionless coordinates $\bx$. The value of $\tNF$ determines how strongly structures of lengthscale 1 in an object $h$ are distorted upon Fresnel-propagation $h \mapsto \cD(h)$: for $\tNF \gg 1$, structures are essentially preserved whereas $\tNF \ll 1$ corresponds to full far-field diffraction.

The FoV will typically be taken as the unit-square, $K = \SquareDom$. Thereby, the Fresnel number $\tNF$ is implicitly defined with $b$ as the detector's physical aspect length. Typical values are then in the range $10^3 \lesssim \tNF \lesssim 10^5$ for high-resolution \HIGHLIGHT{XPCI}-experiments at synchrotrons. By the freedom in choosing $b$, however, one can also associate a Fresnel number with any other lateral scale: if $\sigma$ is a dimensionless length, $\tNFa \sigma := \sigma^2 \tNF$ is the Fresnel number that describes diffraction on the physical scale corresponding to $\sigma$.

%
\vspace{.5em}
\subsubsection{Inverse problems} \label{SSS:MathInvProblems} In order to study \HIGHLIGHT{XPCI} with a finite FoV, we consider image-reconstruction problems both with complex- and phaseless Fresnel-data ($\bdelta$: data-errors):

\vspace{.5em}
 \begin{MyIP}[Reconstruction of complex-valued images] \label{IP:Complex}
  For $\Omega, K \subset \mR^m$, reconstruct a \emph{complex-valued} $h \in L^2(\Omega)$ from either of the following data: \vspace{.25em}
  \begin{enumerate}[leftmargin=2em]
   \item[\textup{(a)}] $g^\obs_{\textup{(a)}}  =  \cD(h)|_K  + \bdelta$
   \item[\textup{(b)}] $g^\obs_{\textup{(b)}}  =  \cT(h)|_K + \bdelta$ 
   \item[\textup{(c)}] $g^\obs_{\textup{(c)}}  = \babs{\cD(\exp(h))}^2 |_K + \bdelta$
  \end{enumerate} 
 \end{MyIP}
 \vspace{.5em}
   \begin{MyIP}[Reconstruction of real-valued (homogeneous) images] \label{IP:Real}
  For  $\Omega, K \subset \mR^m$ and $\alpha \in [0;\pi)$, reconstruct a \emph{real-valued} $\varphi \in L^2(\Omega, \mR)$ from either of the following data: \vspace{.25em}
  \begin{enumerate}[leftmargin=2em]
   \item[\textup{(a)}] $g^\obs_{\textup{(a)}}  =  \cD(\varphi)|_K  + \bdelta$
   \item[\textup{(b)}] $g^\obs_{\textup{(b)}}  =  \cS_\alpha(\varphi) |_K + \bdelta$	
    \item[\textup{(c)}] $g^\obs_{\textup{(c)}}  = \babs{\cD(\exp( -\I \E^{-\I \alpha} \varphi ))}^2 |_K + \bdelta$
  \end{enumerate}  
 \end{MyIP}
\vspace{.5em}
 
It should be emphasized that reconstructions in the setting of \cref{IP:Real}(b),(c), 
are currently standard in \HIGHLIGHT{XPCI}, 
whereas solving \cref{IP:Complex}(b),(c) is typically considered too unstable due to the larger number of unknowns to be recovered. Yet, it is not at all obvious that \cref{IP:Complex} and \cref{IP:Real} also exhibit different effects due to a finite FoV, \HIGHLIGHT{i.e.\ that real-valuedness\footnotemark[1] is relevant for the present study}. Surprisingly, however, this indeed turns out to be the case.
  
To identify the effects of a finite FoV,
we will mostly consider the non-phaseless problems \cref{IP:Complex}(a) and \cref{IP:Real}(a). By the richness of measured data, however, the problems (b) and (c) are clearly harder to solve than the variants (a) and \cref{IP:Real}(a)  is easier to solve than any of the others. In particular, this ``hierarchy-of-difficulties'' means that any instabilities in \cref{IP:Complex}(a) and \cref{IP:Real}(a)  will necessarily also be present in the phaseless problems.

\footnotetext[1]{\HIGHLIGHT{Although we will refer to ``real-valued'' signals throughout the work, note that all the results obtained for such trivially extend to signals given by real-functions multiplied by a global complex phase.}}



\vspace{.5em}
\subsection{Properties of the Fresnel propagator} \label{SS:FresnelProps}
As a preparation for the subsequent analysis, we summarize some basic properties of the Fresnel propagator, see also \cite{PaganinXRay,LieblingEtAl2003Fresnelets,HomannDiss2015}:
\vspace{.5em}
\begin{enumerate} 
 \item[(P1)] \emph{Unitary operator:} The map $\cD: L^2(\mR^m) \to L^2(\mR^m)$ defines a linear \emph{isometry}:
 \begin{align}
  \norm{\cD(f)} = \norm{f}  \FA  f  \in L^2(\mR^m).  \label{eq:FresnelPropUnitary} \tag{P1}  
 \end{align} 
 This also implies that $\cT, \cS_\alpha\!: L^2(\mR^m) \to L^2(\mR^m)$ are bounded with $\norm{\cT}, \norm{\cS_\alpha} \leq 2$.  \vspace{.75em}
 \item[(P2)] \emph{Convolution form:} As a Fourier-multiplier, $\cD$ can be alternatively written as a convolution: for all $f  \in L^1(\mR^m) \cap L^2(\mR^m)$, it holds that
  \begin{align}
  \cD(f)(\bx) &=  \left( \nF \ast f \right)(\bx) = \int_{\mR^m} \kF(\bx  - \by) f(\by) \D \by \FA \bx \in \mR^m \nnl
  \kF &= u_0 \Parens{ \tNF /( 2 \pi ) }^{\frac m 2 } \cdot  \nF, \quad \nF(\bx) = \exp\left( \I \tNF \bx^2 / 2 \right), \quad   u_0 = \exp\left(-{\I m \pi}/4 \right)    \label{eq:FresnelPropConvForm} \tag{P2}
 \end{align} \vspace{-.75em}
 \item[(P3)] \emph{Alternate form:} By rearranging the convolution-formulation \eqref{eq:FresnelPropConvForm}, the following alternate form of the Fresnel propagator can be obtained: 
   \begin{align}
  \cD(f)(\bx) &= u_0 \tNF^{\frac m 2 } \nF(\bx) \cdot \cF\left( \nF \cdot f \right)(\tNF \bx) \FA \bx \in \mR^m \label{eq:FresnelPropAltForm}  \tag{P3}
 \end{align} \vspace{-.75em}
 \item[(P4)] \emph{Separability:} $\mF$ factorizes into a product of functions of a single coordinate:
  \begin{align}
  \mF(\bxi) &= \exp\left( - \frac{\I  \bxi^2}{2\tNF} \right) =  \exp\bbparens{ - \frac{\I }{2\tNF} \sum_{j=1}^m \xi_j^2 } = \prod_{j=1}^m \exp\left( - \frac{\I  \xi_j^2}{2\tNF} \right)  = \prod_{j=1}^m \mFa j (\bxi), \nnl
  \mFa j (\bxi) &:= \mF(\xi_j)   \FA \bxi = (\xi_1,\ldots, \xi_m) \in \mR^m.  \nonumber
 \end{align}
 Consequently, $\cD$ factorizes into a commuting product of quasi-1D Fresnel-propagators acting along the different dimensions:
 \begin{align}
  \cD(f) &= \cD_1 \ldots \cD_m(f) = \cD_m \ldots \cD_1(f) \label{eq:FresnelPropSeparable}  \tag{P4} \\
  \cD_j(f) &:= \cF^{-1} \left( \mFa j \cdot \cF(f) \right) =  \cF_j^{-1} \left( \mFa j \cdot \cF_j(f) \right). \nonumber
 \end{align}
  $\cF_j: L^2(\mR^m) \to L^2(\mR^m)$ is the 1D-Fourier transform along the $j$th dimension. \vspace{.75em}
 
 \item[(P5)]  \emph{Isotropy and translation invariance:} As a convolution operator, $\cD$ is translation invariant, i.e.\ commutes with coordinate-shifts. As $\mF$ is invariant under orthogonal transformations, i.e.\ $\mF(A\bxi) = \mF(\bxi)$ for all $\bxi \in \mR^m, \, A \in O(m)$, $\cD$ also commutes with orthogonal coordinate transforms, i.e.\ acts \emph{isotropically} along all dimensions:
 \begin{align}
   \cD \cA =  \cA \cD \MTEXT{for all} \cA : L^2(\mR^m) \to L^2(\mR^m); \; &\cA(f)(\bx) = f(A\bx + \ba) \label{eq:FresnelPropIsotropy}  \tag{P5} \\
   &A \in O(m), \ba\in \mR^m. \nonumber
 \end{align} \vspace{-.75em}
 \item[(P6)]  \emph{Extension to distributions:} $\cD$ can be extended to tempered distributions $\sS(\mR^m)'$, i.e.\ to the dual space of smooth and rapidly decaying Schwartz-functions $\sS(\mR^m)$:
 \begin{align}
   (\cD (T))(u) := T( \cD(u) ) \FA T \in \sS(\mR^m)', u \in \sS(\mR^m). \label{eq:FresnelPropDistributions} \tag{P6}
 \end{align}
 In particular, one has $\cD(1) = 1$ for the constant $1$-function. Moreover, by continuity of $\cD, \cF: \sS(\mR^m)' \to \sS(\mR^m)'$, \eqref{eq:FresnelPropAltForm} remains valid in a distributional sense.
\end{enumerate}

\vspace{.5em}
\subsection{Preliminary results}	

 We aim to characterize the ill-posedness of inverse problems \cref{IP:Complex} and \cref{IP:Real}. Let us first note that Fresnel-propagation is, in principle, arbitrarily non-local:
 
   \vspace{.5em}
  \begin{theorem}[Arbitrary non-locality of Fresnel-propagation] \label{thm:FresnelNonlocal}
  Let $0 \neq h \in L^2(\mR^m)$ have compact support. Then $\cD(h)$ is supported within the whole $\mR^m$, $\supp(\cD(h)) = \mR^m$.
 \end{theorem}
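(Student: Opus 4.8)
The plan is to reduce the statement to a uniqueness property of Fourier transforms of compactly supported functions, by way of the alternate representation \eqref{eq:FresnelPropAltForm} of the Fresnel propagator. The point of (P3) is that it expresses $\cD(h)$ as a nowhere-vanishing factor times a rescaled Fourier transform $\cF(\nF \cdot h)$, and $\nF \cdot h$ inherits the compact support of $h$. Fourier transforms of compactly supported ($L^1$-)functions are real-analytic, so they cannot vanish on a nonempty open set without vanishing identically; feeding this back through (P3) forces $h = 0$.

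Concretely, I would first note that a compactly supported $h \in L^2(\mR^m)$ also lies in $L^1(\mR^m)$, hence so does $g := \nF \cdot h$, with $\supp(g) = \supp(h)$ compact. Then (P2)–(P3) apply and give
\begin{align}
 \cD(h)(\bx) = u_0 \tNF^{m/2} \, \nF(\bx) \cdot \cF(g)(\tNF \bx), \qquad \bx \in \mR^m . \nonumber
\end{align}
Since $g \in L^1(\mR^m)$, the function $\cF(g)$ is continuous, so $\cD(h)$ is continuous, and because $\nF(\bx) = \exp(\I \tNF \bx^2 / 2)$ never vanishes we have the pointwise equivalence $\cD(h)(\bx) = 0 \iff \cF(g)(\tNF \bx) = 0$. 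I would invoke the Paley–Wiener theorem (or, equivalently, that the Fourier transform of a compactly supported integrable function extends to an entire function on $\mC^m$) to conclude that $\cF(g)$ is real-analytic on $\mR^m$.

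With this in hand the argument is a short contradiction: if $\supp(\cD(h)) \neq \mR^m$, then $U := \mR^m \setminus \supp(\cD(h))$ is a nonempty open set on which the continuous function $\cD(h)$ vanishes, hence $\cF(g)$ vanishes on the nonempty open set $\{\tNF \bx : \bx \in U\}$. By the identity theorem for real-analytic functions on the connected domain $\mR^m$, $\cF(g) \equiv 0$, so $g = 0$ almost everywhere, and therefore $h = \nF^{-1} g = 0$, contradicting $h \neq 0$. Hence $\supp(\cD(h)) = \mR^m$. Note the argument is insensitive to the dimension $m$.

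I expect the only delicate point to be the analyticity input: one must make sure (P3) genuinely applies to $h$ (which it does, via $h \in L^1 \cap L^2$), that $\cD(h)$ is honestly continuous so that its support is the closure of its nonvanishing set, and that the real-analytic identity theorem is cited in the correct form (vanishing on a nonempty open subset of the connected set $\mR^m$, not merely at isolated points). Everything else — the passage between $\cD(h)$, $\cF(g)$ and $h$ via the nowhere-zero chirp $\nF$ — is routine bookkeeping.
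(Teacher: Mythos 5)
Your proof is correct and follows essentially the same route as the paper: both invoke the alternate form \eqref{eq:FresnelPropAltForm} to write $\cD(h)$ as a nonvanishing chirp times a rescaled Fourier transform of the compactly supported function $\nF\cdot h$, and then use Paley--Wiener analyticity to rule out vanishing on any open set. Your version merely spells out the details (the $L^1$-membership, continuity, and the real-analytic identity theorem) that the paper's one-line proof leaves implicit.
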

 \vspace{.5em}
 \begin{proof}
  By \eqref{eq:FresnelPropAltForm} and the Paley-Wiener-Schwartz-theorem, $\cD(h)$ is an entire analytic function. Thus, $\cD(h)$ is non-zero almost everywhere in $\mR^m$.
 \end{proof}
 \vspace{.5em}
 
 Accordingly, measuring diffraction-data only within a finite FoV will always result in some information-\emph{leakage}. One might think that this ultimately introduces \emph{non-uniqueness} of the reconstruction. This is however not the case, as has been shown in previous work:
 
  \vspace{.5em}
  \begin{theorem}[Uniqueness \text{\cite{Maretzke2015IP}}] \label{thm:Unique}
  Let $\Omega \subset \mR^m$ bounded and let $K \subset\mR^m$ contain an open set. Then \cref{IP:Complex} and \cref{IP:Real} are uniquely solvable (up to periodicity of the exponential in (c)).
 \end{theorem}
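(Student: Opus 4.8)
The plan is to reduce the statement, in all cases and for both inverse problems, to the uniqueness results for a \emph{full} detector $K = \mR^m$, exploiting that the Fresnel-data of a compactly supported object depends real-analytically on the spatial variable.

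First I would observe that for bounded $\Omega$ and $h \in L^2(\Omega)$ the function $\cD(h)$ extends to an entire function on $\mC^m$---this is precisely the Paley--Wiener--Schwartz argument from the proof of \cref{thm:FresnelNonlocal}, applied through the alternate form \eqref{eq:FresnelPropAltForm}. From this I would read off, case by case, that the measured data is the restriction to $\mR^m$ of an entire function. In case~(a) this is immediate. In case~(b), $\cT(h) = \cD(h) + \overline{\cD(h)}$, and on $\mR^m$ the second summand coincides with the entire function $\bx \mapsto \overline{u_0}(\tNF/2\pi)^{m/2}\int_\Omega \E^{-\I\tNF(\bx-\by)^2/2}\,\overline{h(\by)}\,\D\by$ obtained from the conjugate chirp kernel; hence $\cT(h)|_K$, and likewise $\cS_\alpha(\varphi)|_K$, has an entire extension. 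In case~(c), $\cD(1) = 1$ by \eqref{eq:FresnelPropDistributions} gives $\cD(\exp h) = 1 + \cD(\exp h - 1)$ with $\cD(\exp h - 1)$ entire, so $\cD(\exp h)$ extends to an entire $\psi$, and $|\cD(\exp h)|^2 = \psi \cdot \psi^\ast$ on $\mR^m$ with $\psi^\ast(\bx) := \overline{\psi(\overline{\bx})}$ again entire---so this data too has an entire extension. In every case the data is (the restriction of) a real-analytic function on the connected set $\mR^m$; therefore, if two admissible solutions produce data that agree on the open subset of $K$ guaranteed by the hypothesis, the difference of their two (real-analytic) data functions vanishes on that open set, hence on all of $\mR^m$. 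It thus suffices to prove uniqueness for $K = \mR^m$.

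For $K = \mR^m$, case~(a) is immediate from injectivity of the isometry $\cD$, \eqref{eq:FresnelPropUnitary}. For case~(b) I would set $g := h_1 - h_2 = -\I\phi - \mu$ with real-valued $\phi = -\Im g$ and $\mu = -\Re g$, both in $L^1(\Omega) \cap L^2(\Omega)$; then $\cT(g) \equiv 0$ and the contrast-transfer-function formula for $\cT$ give $\sin(|\bxi|^2/(2\tNF))\,\cF(\phi)(\bxi) + \cos(|\bxi|^2/(2\tNF))\,\cF(\mu)(\bxi) = 0$ for all $\bxi \in \mR^m$ (pointwise, as $\cF(\phi)$, $\cF(\mu)$ are continuous). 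On every sphere $|\bxi|^2 = (2k+1)\pi\tNF$ the cosine vanishes while the sine does not, so $\cF(\phi)$ vanishes there; restricting $\cF(\phi)$ to an arbitrary ray $t \mapsto \cF(\phi)(t\be)$ through the origin yields an entire function of exponential type (Paley--Wiener, since $\phi$ has compact support) whose zeros include $\pm\sqrt{(2k+1)\pi\tNF}$ for all $k \geq 0$---a set whose counting function grows like $r^2$, which is incompatible with finite exponential type unless the function vanishes identically. Hence $\cF(\phi) \equiv 0$; the spheres $|\bxi|^2 = 2k\pi\tNF$ force $\cF(\mu) \equiv 0$ in exactly the same way, so $g = 0$. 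Case~(c) for $K = \mR^m$ is precisely the uniqueness of propagation-based phase retrieval from a single hologram under a support constraint, which I would invoke from \cite{Maretzke2015IP}: $|\cD(\exp h_1)|^2 = |\cD(\exp h_2)|^2$ forces $\exp h_1 = \exp h_2$, i.e.\ $h_1$ is determined up to addition of a $2\pi\I\mZ$-valued function. Finally, \cref{IP:Real} is subsumed, since each of its cases (a), (b), (c) is the corresponding case of \cref{IP:Complex} with $h = \varphi$, resp.\ $h = -\I\E^{-\I\alpha}\varphi$; the periodicity ambiguity in (c) then vanishes for $\alpha \in (0;\pi)$ (as $\varphi$ is real) and reduces to a $2\pi\mZ$-valued shift for $\alpha = 0$.

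The entire-extension bookkeeping and the ray/zero-counting estimate in case~(b) are routine. The genuine obstacle is case~(c): phaseless data determines only the product $\psi\psi^\ast$, and recovering $\psi$ from such a product is the classically ill-posed, generically non-unique, ``zero-flipping'' problem of phase retrieval; uniqueness is restored only through the interplay of the explicit Fresnel structure $\psi = 1 + \cD(\exp h - 1)$ with the constraint $\supp(\exp h - 1) \subset \Omega$, which is exactly what \cite{Maretzke2015IP} establishes and which I would cite rather than reprove.
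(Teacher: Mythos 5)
The paper itself gives no proof of \cref{thm:Unique} --- it is imported wholesale from \cite{Maretzke2015IP} --- so there is no in-paper argument to compare against; your proposal is, however, a correct reconstruction of the strategy of that reference: reduce to $K=\mR^m$ via the entire (Paley--Wiener--Schwartz) extension of the data, settle (a) by unitarity of $\cD$, settle (b) by the vanishing of $\cF(\phi)$ resp.\ $\cF(\mu)$ on the spheres where the cosine resp.\ sine CTF-factor vanishes together with a Jensen-type zero-counting bound for the exponential-type restrictions $t\mapsto\cF(\phi)(t\be)$, and delegate the genuinely hard phaseless case (c) to \cite{Maretzke2015IP}, exactly as the paper does. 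The only caveat worth recording is that in case (c) the step ``$\exp(h)-1\in L^2(\Omega)$, hence $\cD(\exp(h)-1)$ is entire'' requires square-integrability of $\exp(h)$, which is not automatic for arbitrary complex $h\in L^2(\Omega)$ (it does hold under the physical constraint $\Re\, h=-\mu\le 0$ or for bounded $h$); this is a well-definedness issue of the forward map in (c) rather than a gap in your reasoning.
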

 \vspace{.5em}
 
  \Cref{thm:Unique} means that the question, whether a small detection-domain $K$ raises issues, admits no simple yes-no-answer. Indeed, it implies that the effects of the size of $K$ can only be understood by studying \emph{stability}. We recall that -- for infinite detectors -- the \emph{linear} inverse problems \cref{IP:Complex}(a),(b) and \cref{IP:Real}(a),(b) are Lipschitz-stable, i.e.\ \emph{well-posed}:

   \vspace{.5em}
  \begin{theorem}[Well-posedness for infinite detectors and compact supports \text{\cite{MaretzkeHohage2017SIAM}}] \label{thm:WellPosed}
  Let $\Omega \subset \mR^m$ be bounded and let $K = \mR^m$. Then \cref{IP:Complex}(a),(b) and \cref{IP:Real}(a),(b) are well-posed, i.e.\ if $T: L^2(\Omega) \to L^2(\mR^m)$ denotes the corresponding forward operator, then there exist constants $C_{\textup{stab}}^{\textup{IP}\ast} > 0$, depending on $\tNF, m, \Omega$ (and $\alpha$), such that 
  \begin{align}
   \Norm{ T ( h) } \geq C_{\textup{stab}}^{\textup{IP}\ast}\norm{h} \MTEXT{for all} h \in L^2(\Omega). \label{eq:thm-WellPosed-1}
  \end{align} 
 \end{theorem}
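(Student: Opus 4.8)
The plan is to separate the trivial parts~\textup{(a)} from the substantial parts~\textup{(b)}. For parts~\textup{(a)} there is nothing to do: when $K=\mR^m$ the forward operator is $\cD$ restricted to $L^2(\Omega)$ (resp.\ to its real-valued elements), which is an isometry by~\eqref{eq:FresnelPropUnitary}, so~\eqref{eq:thm-WellPosed-1} holds with $C_{\textup{stab}}^{\textup{IP}\ast}=1$. The content is the lower bound for the linearized operators $\cT$ of \cref{IP:Complex}(b) and $\cS_\alpha$ of \cref{IP:Real}(b); here I would follow the strategy of \cite{MaretzkeHohage2017SIAM}. For $\cS_\alpha=-2\cF^{-1}(s_\alpha\cdot\cF(\,\cdot\,))$, Plancherel gives $\norm{\cS_\alpha(\varphi)}^2=4\int_{\mR^m}s_\alpha^2\,\abs{\cF(\varphi)}^2\,\D\bxi$ and $\norm{\varphi}^2=\int_{\mR^m}\abs{\cF(\varphi)}^2\,\D\bxi$, so~\eqref{eq:thm-WellPosed-1} becomes the statement that the mass of $\abs{\cF(\varphi)}^2$ cannot concentrate near the zero set of $s_\alpha(\bxi)=\sin(\abs{\bxi}^2/(2\tNF)+\alpha)$, i.e.\ near the union of concentric spheres $\{\abs{\bxi}^2=2\tNF(\pi k-\alpha)\}$. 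The only available leverage is the support constraint: by the Paley--Wiener theorem, $\supp\varphi\subset\Omega$ forces $\cF(\varphi)$ to extend to an entire function of exponential type $O(\diam\Omega)$ with $\abs{\cF(\varphi)}\le(2\pi)^{-m/2}\abs{\Omega}^{1/2}\norm{\varphi}$ on $\mR^m$, and the idea is to play this rigidity off against the oscillation of $s_\alpha$, separately at high and at low frequencies. (For $\cT$ one has $\norm{\cT(h)}=2\norm{\Re\cD(h)}$ while $\norm{h}^2=\norm{\Re\cD(h)}^2+\norm{\Im\cD(h)}^2$, so the claim becomes that $\cD(h)$ is never almost purely imaginary; expressing $\widehat{\cT(h)}$ through the pair $\cF(h)(\bxi)$, $\overline{\cF(h)(-\bxi)}$ coupled by the unimodular factor $\E^{\I\abs{\bxi}^2/\tNF}$ puts this into the same shape, and the steps below apply.)

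The main step would be a high-frequency estimate: that there exist $\rho,c_1>0$ depending only on $m,\tNF,\Omega$ (and $\alpha$) with $\int_{\abs{\bxi}\ge\rho}s_\alpha^2\,\abs{\cF(\varphi)}^2\,\D\bxi\ge c_1\int_{\abs{\bxi}\ge\rho}\abs{\cF(\varphi)}^2\,\D\bxi$ for all $\varphi\in L^2(\Omega)$. Since $s_\alpha$ depends only on $\abs{\bxi}$, I would work along rays: on each ray through the origin the profile of $\cF(\varphi)$ is entire of exponential type $\tau=O(\diam\Omega)$, and I would partition the radial variable $r\ge\rho$ into consecutive cells $I_\ell$ of length $\abs{I_\ell}\sim\pi\tNF/R_\ell$ (with $R_\ell$ the left endpoint of $I_\ell$), chosen so that $r\mapsto r^2/(2\tNF)$ advances by roughly one period of $\sin^2$ across each $I_\ell$; then $s_\alpha^2\ge\tfrac12$ on a subcell $J_\ell\subset I_\ell$ with $\abs{J_\ell}\gtrsim\abs{I_\ell}$. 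Since $\tau\abs{I_\ell}\lesssim\diam\Omega\cdot\tNF/R_\ell$, once $\rho\gtrsim\tNF\diam\Omega$ the profile is almost frozen over each $I_\ell$, so a Remez/Tur\'an-type inequality for exponential-type functions on a short interval yields $\sup_{I_\ell}\abs{\cF(\varphi)}^2\lesssim\abs{J_\ell}^{-1}\int_{J_\ell}\abs{\cF(\varphi)}^2\,\D r$ with a constant uniform in $\ell$ and in the ray; combining this with $s_\alpha^2\ge\tfrac12$ on $J_\ell$, summing over $\ell$ and integrating over rays (the radial Jacobian $r^{m-1}$ is comparable across each short cell) gives the estimate. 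I expect this radial estimate — quantifying the competition between the \emph{fixed} Paley--Wiener bandwidth $\sim\diam\Omega$ and the \emph{shrinking} width $\sim\tNF/r$ of the near-zero annuli of $s_\alpha$, which is what fixes the threshold $\rho\sim\tNF\diam\Omega$ — to be the crux of the argument. For $\cT$ the analogous high-frequency estimate holds because the coupling phase $\E^{\I\abs{\bxi}^2/\tNF}$ then oscillates faster than $\cF(h)$ varies, so no cancellation survives averaging over a cell.

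It then remains to absorb the bounded frequency region $\{\abs{\bxi}<\rho\}$, which I would handle by a soft compactness argument rather than an explicit bound. If~\eqref{eq:thm-WellPosed-1} failed for $\cS_\alpha$, there would be $\varphi_n\in L^2(\Omega)$ with $\norm{\varphi_n}=1$ and $\norm{\cS_\alpha(\varphi_n)}\to0$; the high-frequency estimate then forces $\int_{\abs{\bxi}\ge\rho}\abs{\cF(\varphi_n)}^2\to0$, hence $\int_{\abs{\bxi}<\rho}\abs{\cF(\varphi_n)}^2\to1$. Passing to a subsequence, $\varphi_n\rightharpoonup\varphi_\ast$ weakly in $L^2(\Omega)$; since $\cF(\varphi_n)(\bxi)=(2\pi)^{-m/2}\int_\Omega\E^{-\I\bxi\cdot\bx}\varphi_n(\bx)\,\D\bx$ is obtained by pairing $\varphi_n$ with a fixed exponential in $L^2(\Omega)$, and $\{\cF(\varphi_n)\}$ is uniformly bounded and equicontinuous on bounded sets, one gets $\cF(\varphi_n)\to\cF(\varphi_\ast)$ locally uniformly, so $\int_{\abs{\bxi}<\rho}\abs{\cF(\varphi_\ast)}^2=1$ and in particular $\varphi_\ast\neq0$. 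On the other hand $\cS_\alpha$ is bounded, hence weakly continuous, so $\cS_\alpha(\varphi_\ast)$ is the weak limit of $\cS_\alpha(\varphi_n)\to0$, i.e.\ $\cS_\alpha(\varphi_\ast)=0$; by the uniqueness statement of \cref{thm:Unique} (applicable since $K=\mR^m$ contains an open set) this forces $\varphi_\ast=0$ — a contradiction. Running the same scheme with the frequencies $\bxi,-\bxi$ paired as above and \cref{thm:Unique} invoked for \cref{IP:Complex}(b) then gives the bound for $\cT$, which completes the proof.
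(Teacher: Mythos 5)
Your treatment of parts (a) coincides with the paper's: unitarity of $\cD$ gives \eqref{eq:thm-WellPosed-1} with constant $1$, and there is nothing more to say. For parts (b) the paper offers no argument at all --- the theorem is imported from \cite{MaretzkeHohage2017SIAM} and the ``proof'' is the citation --- so what you have written is a reconstruction of that reference's argument rather than a match or a mismatch with anything in this manuscript. Your reconstruction follows the same high-frequency strategy (analyticity of $\cF(\varphi)$ from the support constraint played against the shrinking width $\sim\tNF/|\bxi|$ of the near-zero annuli of $s_\alpha$), but replaces the explicit low-frequency bounds of the reference by a soft compactness-plus-uniqueness contradiction. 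That substitution is legitimate for the statement as written, which only asserts \emph{existence} of $C_{\textup{stab}}^{\textup{IP}\ast}>0$, and the compactness step itself is sound: the uniform bound and equicontinuity of $\cF(\varphi_n)$ on bounded sets do force a nonzero weak limit once the high-frequency mass is killed, and \cref{thm:Unique} finishes it. What you buy is brevity and self-containedness; what you lose is every quantitative statement about $C_{\textup{stab}}^{\textup{IP}\ast}$, in particular the decay rates in $\tNF$ and $\Omega$ that the paper actually uses later in \secref{S:Phaseless}.

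Two places in the high-frequency estimate are genuine gaps rather than routine omissions. First, the Remez/Tur\'an-type inequality $\sup_{I_\ell}\abs{\cF(\varphi)}^2\lesssim\abs{J_\ell}^{-1}\int_{J_\ell}\abs{\cF(\varphi)}^2$ is the crux of the whole argument and is only asserted; you need a version for entire functions of exponential type $\tau$ on an interval with $\tau\abs{I_\ell}\lesssim1$ whose constant is uniform in the cell index, in the ray and in $\varphi$ (the Tur\'an--Nazarov lemma provides this, but the dependence on the ratio $\abs{J_\ell}/\abs{I_\ell}$ must be checked against your choice of subcells where $s_\alpha^2\ge\tfrac12$). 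Second, the operator $\cT$ is dispatched in one sentence, but it is structurally different from $\cS_\alpha$: one has
\begin{align*}
 \Norm{\cT(h)}^2 \;=\; 2\Norm{h}^2 + 2\,\Re\!\int_{\mR^m}\E^{-\I\abs{\bxi}^2/\tNF}\,\cF(h)(\bxi)\,\cF(h)(-\bxi)\,\D\bxi,
\end{align*}
so the task is to show the oscillatory cross term cannot cancel $2\Norm{h}^2$; cell by cell this needs an estimate of $\int_{I_\ell}\E^{-\I r^2/\tNF}\,\D r$ together with Bernstein-type control of the variation of $\cF(h)$ over $I_\ell$, none of which follows formally from the computation for the real multiplier $s_\alpha$. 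Neither gap looks fatal, and both can plausibly be filled along the lines you indicate, but as written the proposal proves parts (a) and only sketches parts (b).
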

 \vspace{.5em}
 \begin{proof}
  For \cref{IP:Complex}(a) and \cref{IP:Real}(a), the result is due to the unitarity of the Fresnel propagator \eqref{eq:FresnelPropUnitary} and one has $C_{\textup{stab}}^{\textup{IP1(a)}}  = C_{\textup{stab}}^{\textup{IP2(a)}}  = 1$. For \cref{IP:Complex}(b) and \cref{IP:Real}(b), the general statement along with estimates of the constants $C_{\textup{stab}}^{\textup{IP}\ast}$ is proven in \cite{MaretzkeHohage2017SIAM}.
 \end{proof}
 \vspace{.5em}
 
 The point of Lipschitz-stability estimates of the form \eqref{eq:thm-WellPosed-1} is that they are necessary and sufficient for the operator $T$ to have a bounded (pseudo-)inverse $T^\dagger$ and thereby ensure that data-errors $\bdelta$ induce only \emph{bounded} deviations $\leq \parens{ C_{\textup{stab}}^{\textup{IP}\ast} }^{-1} \norm{\bdelta}$ in the reconstructions. Clearly, one would like to have similar results for finite detectors $K \subsetneq \mR^m$. 
 However, the following theorem shows that stability may deteriorate dramatically due to a finite FoV:

  \vspace{.5em}
 \begin{theorem}[Severe ill-posedness for bounded detectors] \label{thm:Illposed}
  Let $\Omega, K \subset \mR^m$ be bounded with non-empty interior. Then \cref{IP:Complex} and \cref{IP:Real} are severely ill-posed.
 \end{theorem}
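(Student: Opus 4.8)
The plan is to reduce everything to a single ``master operator'' and to show that its singular values decay exponentially. Let $T_0\colon L^2(\Omega)\to L^2(K)$, $T_0 h:=\cD(h)|_K$. First I would note that the forward operator of \cref{IP:Complex}(a) \emph{is} $T_0$, that of \cref{IP:Real}(a) is the restriction of $T_0$ to the real‑linear subspace $L^2(\Omega,\mR)$, and those of the (b)-variants are $\cT(\cdot)|_K=2\Re(\cD(\cdot))|_K$, resp.\ $\cS_\alpha(\cdot)|_K=2\Re\bparens{\E^{-\I\alpha}\cD(-\I\,\cdot)}|_K$, i.e.\ $T_0$ post‑composed with the pointwise contraction $2\Re(\cdot)$ (and, for \cref{IP:Real}, precomposed with the inclusion of real‑valued inputs). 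Finally, the \emph{nonlinear} forward maps of (c) linearize at the trivial image $h=0$ (resp.\ $\varphi=0$) exactly to $\cT(\cdot)|_K$, resp.\ $\cS_\alpha(\cdot)|_K$ — this is the paper's own CTF‑convention, cf.\ \eqref{eq:LinModel}, \eqref{eq:fwModel2}, and it uses $\cD(1)=1$ from \eqref{eq:FresnelPropDistributions} together with linearity of $\cD$. Since post‑composition with a pointwise contraction does not enlarge operator norms, and restriction of the domain to a real‑linear subspace at most doubles the rank of a finite‑rank approximant, exponential decay of the singular values $s_n(T_0)$ forces the same super‑polynomial decay for all six (linearized) forward operators, which is precisely severe ill‑posedness.

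The second step puts $T_0$ into a form where the decay is visible. Fix $R>0$ with $\Omega\subset\overline{B_R(0)}$ and $\rho>0$ with $\tNF K\subset\overline{B_\rho(0)}$ (possible since $\Omega,K$ are bounded). By the alternate form \eqref{eq:FresnelPropAltForm}, $\cD(h)(\bx)=u_0\,\tNF^{m/2}\,\nF(\bx)\,\cF(\nF h)(\tNF\bx)$. Because $\abs{\nF}\equiv1$ and $\supp(\nF h)=\supp h\subset\Omega$, the Paley--Wiener theorem (as already used in \cref{thm:FresnelNonlocal}) together with Cauchy--Schwarz shows that $G_h:=\cF(\nF h)$ extends to an entire function on $\mC^m$ with $\abs{G_h(\bxi)}\le(2\pi)^{-m/2}\abs{\Omega}^{1/2}\norm{h}\,\E^{R\abs{\Im\bxi}}$, and $h\mapsto G_h$ is linear. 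As the unimodular factor $\nF$ and the dilation $\bx\mapsto\tNF\bx$ (whose $L^2$-Jacobian cancels the $\tNF^{m/2}$) are isometries, $\norm{T_0 h}_{L^2(K)}=\norm{G_h}_{L^2(\tNF K)}$; hence $T_0$ has the same singular values as the operator ``restrict the exponential‑type entire function $G_h$, encoded by $h\in L^2(\Omega)$, to the bounded set $\tNF K$'' — the classical space/band‑limiting (prolate‑spheroidal) concentration operator, whose severe ill‑posedness is documented in the Landau--Pollak--Slepian theory.

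The third step makes this quantitative. I would approximate $G_h$ on $\tNF K$ by its degree‑$N$ Taylor polynomial $G_h^{(N)}$ at the origin. Cauchy's estimate on polydiscs of radius $r$, combined with the exponential‑type bound and the optimal choice $r=\abs{\gamma}/(R\sqrt m)$, gives for every multi‑index $\gamma$
\[
  \Abs{ \frac{\partial^\gamma G_h(0)}{\gamma!} } \;\le\; (2\pi)^{-m/2}\abs{\Omega}^{1/2}\norm{h}\; \Parens{ \frac{\E R\sqrt m}{\abs{\gamma}} }^{\abs{\gamma}} ,
\]
and summing the tail over $\abs{\gamma}>N$, using $\abs{\bxi^\gamma}\le\rho^{\abs{\gamma}}$ on $\tNF K$ and the count $\#\{\gamma:\abs{\gamma}=k\}\le(k+1)^{m-1}$, yields $\norm{G_h-G_h^{(N)}}_{L^2(\tNF K)}\le C_1\norm{h}\,2^{-N/2}$ for all $N\ge N_0$, with $C_1,N_0$ depending only on $\Omega,K,\tNF,m$. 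The linear operator $h\mapsto G_h^{(N)}|_{\tNF K}$ has rank at most $\binom{N+m}{m}\le(N+m)^m$, so (after undoing the isometries above) this exhibits, for every $N\ge N_0$, a rank‑$(N+m)^m$ operator within distance $C_1 2^{-N/2}$ of $T_0$. Inverting $n\approx(N+m)^m$ and using monotonicity of the singular values to fill the gaps, one obtains $s_n(T_0)\le C_2\,\E^{-c_2 n^{1/m}}$ for all $n\ge1$; in particular $T_0$ is Hilbert--Schmidt with exponentially decaying singular values, hence severely ill‑posed, and by the first step so are all variants of \cref{IP:Complex} and \cref{IP:Real}. (Note how this contrasts with the infinite‑detector case, \cref{thm:WellPosed}, where $s_n\equiv1$.)

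The hard part is the quantitative estimate in the third step: controlling the Taylor remainder of an entire function of exponential type uniformly on a bounded set, in terms only of the $L^2(\Omega)$-norm of the ``data'' $h$, with the multi‑dimensional bookkeeping (the optimization of the Cauchy radius and the multi‑index tail sum) done carefully enough to extract the rate $\E^{-c n^{1/m}}$. This is the elementary heart of the Landau--Widom eigenvalue asymptotics for time/band‑limiting operators, but it must be set up cleanly; everything else — the reduction via pointwise real parts, the restriction to real‑valued inputs, the linearization of the nonlinear (c)-maps at the zero image via $\cD(1)=1$, and the Paley--Wiener input — is routine and parallels material already used in the excerpt.
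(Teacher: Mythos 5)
Your argument is correct, but it takes a genuinely different route from the paper. The paper reduces to \cref{IP:Real}(a) via the hierarchy-of-difficulty and then simply computes $T^\ast T$ as an integral operator whose kernel $k(\bx,\by')=\int_K \Re\parens{\kF(\bx-\by)\cc{\kF(\by-\by')}}\,\D\by$ is $\sC^\infty$ and square-integrable on the bounded set $\Omega\times\Omega$; it then invokes the standard fact that infinitely smoothing integral operators have super-algebraically decaying eigenvalues. That is a short, soft argument with no explicit rate. You instead work with the master operator $T_0 h=\cD(h)|_K$ directly, use the alternate form \eqref{eq:FresnelPropAltForm} and Paley--Wiener to identify $T_0$ (up to unimodular factors and a dilation, both isometries) with the restriction of entire functions of exponential type $R$ to the bounded set $\tNF K$ --- the Landau--Pollak--Slepian concentration operator --- and then produce explicit finite-rank approximants via Taylor truncation and optimized Cauchy estimates. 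Your computations check out: the optimal radius $r=\abs{\gamma}/(R\sqrt m)$ gives the stated coefficient bound, the multi-index tail sum converges geometrically once $\abs{\gamma}\gtrsim \E R\sqrt m\,\rho$, and the approximation-number characterization of singular values then yields $s_n(T_0)\le C\,\E^{-c n^{1/m}}$, which is strictly stronger than the paper's qualitative ``super-algebraic decay.'' Your reduction of the remaining five variants to $T_0$ (post-composition with the contraction $2\Re$, restriction to real-valued inputs, linearization of the (c)-maps at $h=0$ using $\cD(1)=1$) is sound and matches the paper's hierarchy argument in substance; the only cosmetic imprecision is that restricting the domain to a real-linear subspace never increases the rank of an approximant --- the factor of $2$ arises only from recasting complex-linear approximants as real-linear ones, which is harmless here. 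In short: the paper's proof is shorter and non-quantitative; yours is longer but delivers the sharp exponential decay rate and makes the connection to classical time/band-limiting theory explicit.
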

 \vspace{.5em}
 \begin{proof}
  By the hierarchy-of-difficulty discussed in \secref{SS2.0}, it is sufficient to prove the claim for \cref{IP:Real}(a). Accordingly, we have to consider the singular values of the forward operator $T: L^2(\Omega, \mR) \to L^2(K); \, h \mapsto \cD(h)|_K$. Thus, we compute $T^\ast T$. Using the convolution-form \eqref{eq:FresnelPropConvForm}, it can be shown that, for arbitrary $h \in L^2(\Omega, \mR)$,
  \begin{align*}
   T^\ast T(\bx) 
   &= \int_{\Omega} \bbparens{ \underbrace{ \int_{K} \Re\Parens{  \kF(\bx - \by) \cdot  \cc{\kF(\by- \by') } } \, \D \by }_{k (\bx, \by')} } h(\by') \, \D \by' \MTEXT{for all} \bx \in \Omega.
  \end{align*}
  Accordingly, $T^\ast T$ is given by an integral-operator with kernel $k $. Since $\kF$ is bounded and infinitely smooth, so is $k $ and $k  \in L^2(\Omega \times \Omega)$ by boundedness of $\Omega$. In total, this implies that $T^\ast T$ is an infinitely smoothing compact integral-operator so that its eigenvalues, the squared singular values of $T$, decay super-algebraically. This shows that \cref{IP:Real}(a) and hence all considered inverse problems are severely ill-posed.
 \end{proof}
  \vspace{.5em}
  
  Importantly, the severe ill-posedness arises \emph{independently} of the phase-problem, i.e.\ also for reconstructions from seemingly complete Fresnel-data $\cD(h)|_K$.
  In practice, the result means that there will always be a large number of image-modes that cannot be recovered from finite detector data at any realistically achievable noise-levels.
  This prediction is in contradiction to the stable reconstructions achieved in practical \HIGHLIGHT{XPCI} and thus necessitates a deeper analysis of the nature  of the found ill-posedness.

\vspace{.5em}
\section{Assessment by Gaussian wave-packets} \label{S:GaussObservations}

In the following, we aim to assess stability of \cref{IP:Complex} and \cref{IP:Real} by considering Gaussian wave-packets as a special class of object-signals $h$, for which Fresnel-propagation may be computed analytically. The theory is completely analogous to the textbook-example of wave-packets for the time-dependent Schr\"odinger-equation.

\vspace{.5em}
\subsection{The Gaussian-beam solution} \label{SS:GaussianBeam}

We consider centered Gaussians of  width $\sigma > 0$:  
\begin{align}
 p_{\sigma}(\bx) =   \left( 2 \pi \sigma^2 \right)^{-m/2} \exp \left( -   \frac{\bx^2}{2 \sigma^2}   \right) \MTEXT{for all} \bx   \in \mR^m \label{eq:FoV-2}
\end{align}
Owing to the Gaussian form,  $\cD( p_{\sigma} )$ can be computed explicitly. It constitutes an exact solution to the paraxial Helmholtz equation \eqref{eq:ParaxialHelmholtz} known as the \emph{Gaussian beam}, see e.g.\ \cite[Sec.\ 3.1]{Teich1991Photonics}. With a certain unitary factor $c_0$, it can be written in the form
\begin{align}
   \cD( p_{\sigma} )(\bx)   &= \frac{ \tsigma^{m/2} c_0 }{ \sigma^{m/2} } \exp\left(\frac{\I \bx^2}{2 \eta^2} \right)  p_{\tsigma}(\bx), \qquad   \eta^2 := \frac{ 1 + \sigma^4 \tNF^2 }{\tNF},  \quad \tsigma ^2  := \frac{ \eta^2 }{ \sigma^2 \tNF }. \label{eq:GaussianBeam}
\end{align}
Accordingly, $ \cD( p_{\sigma} ) $ is again of Gaussian shape, yet modulated by a unitary oscillatory factor.

Consider the limit $\sigma \to 0$ of a more and more localized peak. Then the \emph{propagated width} $\tsigma$ tends to infinity according to \eqref{eq:GaussianBeam}, i.e.\ the propagated Gaussian $\cD( p_{\sigma} )$ becomes arbitrarily \emph{delocalized}. 
Indeed, it holds that
\begin{align}
 \lim_{\sigma \to 0}  \norm{\cD( p_{\sigma} )|_{K}}/\norm{ p_{\sigma}} = 0  \label{eq:FoV-2-limit}
\end{align}
for any bounded detection-domain $K \subset \mR^m$. The example indicates that, asymptotically, the sharper a feature in the object the less contrast it induces in the diffraction data on a finite detector $K$. Accordingly, a finite FoV limits the achievable \emph{resolution}. 

\vspace{.5em}
\subsection{Gaussian wave-packets}\label{SS:GaussianWavePackets}

In order to further investigate the relation between the detection-domain $K$ and resolution, we study the propagation of Gaussian wave-packets, given by a Gaussian peak that is modulated by a sinusoidal oscillation:
\begin{align}
 h _{\bxi, \ba}(\bx) :=  \exp\bparens{ \I \bxi  \cdot (\bx - \ba) } p_\sigma(\bx - \ba), \quad \bxi, \ba \in \mR^m. \label{GaussWavePacket}
\end{align}
Analytical propagation of such signals is
enabled by the following lemma:

\vspace{.5em}
\begin{lemma}[Fresnel propagation under frequency shifts] \label{lem:FresnelPropFreqShift}
 For $\bxi, \bb \in \mR^m$,  $\be_{\bxi }(\bx) := \exp(\I \bxi \cdot \bx)$ denote the Fourier mode to the frequency $\bxi$ and $T_{\bb}: f \mapsto f((\cdot) + \bb)$ the translation by $\bb$. Then it holds for all $f \in L^2(\mR^m)$ that
   \begin{align}
  \cD( \be_{\bxi} \cdot f ) &=   \mF(\bxi) \cdot \be_{\bxi}  \cdot T_{-\bxi/\tNF}\left( \cD  (f)  \right)  \label{eq:FresnelPropFreqShift}
 \end{align}
 where $\mF$ is the Fresnel factor from \eqref{eq:DefFresnelProp}.
\end{lemma}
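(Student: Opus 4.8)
The plan is to verify \eqref{eq:FresnelPropFreqShift} by a direct computation in Fourier space, starting from the multiplier definition $\cD(g) = \cF\bparens{\mF \cdot \cF^{-1}(g)}$ in \eqref{eq:DefFresnelProp}. Since $\cD$, multiplication by the unimodular function $\be_{\bxi}$, multiplication by the unimodular constant $\mF(\bxi)$, and the translation $T_{-\bxi/\tNF}$ are all isometries of $L^2(\mR^m)$, it suffices to prove the identity for $f$ in the dense subspace $\sS(\mR^m)$ and then pass to the limit; for Schwartz $f$ all Fourier transforms below are absolutely convergent integrals, so the manipulations are elementary.

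First I would use the modulation--translation duality of the Fourier transform: with the sign convention of \eqref{eq:DefFresnelProp}, $\cF^{-1}(\be_{\bxi} \cdot f) = T_{\bxi}\bparens{\cF^{-1}(f)}$, i.e.\ multiplication by $\be_{\bxi}$ in real space turns into a shift by $\bxi$ in frequency space. Hence $\cD(\be_{\bxi} \cdot f) = \cF\bparens{\mF \cdot T_{\bxi}(\cF^{-1}(f))}$, whose integrand at frequency $\bw$ is $\mF(\bw)\,\cF^{-1}(f)(\bw + \bxi)$. The second ingredient is the algebraic identity $\bw^2 = (\bw+\bxi)^2 - 2\bxi\cdot(\bw+\bxi) + \bxi^2$, which factorizes the quadratic phase as $\mF(\bw) = \mF(\bw+\bxi) \cdot \exp\bparens{\I \bxi\cdot(\bw+\bxi)/\tNF} \cdot \mF(\bxi)$. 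Substituting this and changing the integration variable to $\bv := \bw + \bxi$ in $\cF\bparens{\mF \cdot T_{\bxi}(\cF^{-1}(f))}(\bx)$, the factor $\mF(\bxi)$ comes out as a constant, the factor $\exp(\I \bx\cdot\bxi) = \be_{\bxi}(\bx)$ comes out of the integral, the remaining exponential combines with $\exp(-\I \bx\cdot\bw)$ into $\exp(-\I(\bx - \bxi/\tNF)\cdot\bv)$, and what is left is exactly $\cF\bparens{\mF\cdot\cF^{-1}(f)}(\bx - \bxi/\tNF) = \cD(f)(\bx - \bxi/\tNF) = \bparens{T_{-\bxi/\tNF}\cD(f)}(\bx)$. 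Collecting the three factors gives \eqref{eq:FresnelPropFreqShift}.

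There is no genuinely hard step here: the computation is just bookkeeping with the quadratic phase $\mF$ (``completing the square''), and the only point that deserves a word is the reduction from $L^2(\mR^m)$ to $\sS(\mR^m)$, which is legitimate because all operators appearing on both sides are bounded (in fact isometric) on $L^2(\mR^m)$ and $\sS(\mR^m)$ is dense. As a consistency check, note that \eqref{eq:FresnelPropFreqShift} is precisely the Galilean-boost invariance of the free Schr\"odinger / paraxial Helmholtz evolution \eqref{eq:ParaxialHelmholtz}: a wave packet carrying transverse momentum $\bxi$ propagates like the zero-momentum packet, translated by $\bxi/\tNF$ and multiplied by a global phase. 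One could alternatively obtain the identity from the alternate form \eqref{eq:FresnelPropAltForm}, or by checking that both sides solve the paraxial Helmholtz equation with initial datum $\be_{\bxi}\cdot f$, but the Fourier computation above is the most direct route.
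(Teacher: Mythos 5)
Your proof is correct, and it takes a slightly different (Fourier-dual) route from the paper's. The paper proves \cref{lem:FresnelPropFreqShift} in \cref{App:FresnelPropFreqShift} by starting from the alternate chirp--Fourier--chirp form \eqref{eq:FresnelPropAltForm} of $\cD$ and completing the square in \emph{real space}: it shows $\nF\cdot\be_{\bxi} = \mF(\bxi)\, T_{\bxi/\tNF}(\nF)$, then pushes the resulting translation through $\cF$ (picking up the modulation $\be_{\bxi}$) and finally through $\cD$ itself, using that $\cD$ commutes with translations as a convolution operator. You instead stay with the multiplier definition \eqref{eq:DefFresnelProp} and complete the square on the symbol, via $\bw^2=(\bw+\bxi)^2-2\bxi\cdot(\bw+\bxi)+\bxi^2$, after converting the modulation into a frequency shift; the change of variables then produces the three factors $\mF(\bxi)$, $\be_{\bxi}$ and $T_{-\bxi/\tNF}(\cD(f))$ directly. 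The two computations are exact duals of one another, but yours is arguably the more self-contained, since it does not presuppose \eqref{eq:FresnelPropAltForm} or the convolution form \eqref{eq:FresnelPropConvForm}. The density reduction to $\sS(\mR^m)$ is handled correctly (every operator on either side of \eqref{eq:FresnelPropFreqShift} is an isometry of $L^2(\mR^m)$), the signs work out with the paper's Fourier convention, and the closing remark identifying the identity as Galilean-boost covariance of \eqref{eq:ParaxialHelmholtz} is a sound sanity check.
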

\vspace{.5em}
 
 \Cref{lem:FresnelPropFreqShift} is proven in \cref{App:FresnelPropFreqShift}. It states that the Fresnel propagator partly translates frequency-shifts into spatial shifts. By applying \eqref{eq:FresnelPropFreqShift} to the Gaussian-beam \eqref{eq:GaussianBeam}, we obtain an analytical formula for the propagation of Gaussian wave-packets:
\begin{subequations} \label{GaussWavePacketProp} 
 \begin{align}
 \cD\bparens{ h _{\bxi, \ba} } (\bx) =  \nu_{\bxi}\Parens{\bx - \ba - \bxi/\tNF}  p_{\tilde \sigma}(\bx - \ba - \bxi/\tNF)  \label{GaussWavePacketProp-a} \\
  \nu_{\bxi}\Parens{\bx} = \frac{ \tsigma^{m/2} c_0 }{ \sigma^{m/2} }  \exp  \Parens{ \I  \bbparens{ \frac{\bx^2 }{2 \eta^2} + \bxi \cdot \bx + \frac{\bxi^2}{2 \tNF} } }.  \label{GaussWavePacketProp-b} 
\end{align}
\end{subequations}
The oscillatory factor $\nu_{\bxi}$ has constant modulus. Hence, the envelope $\babs{\cD\bparens{ h _{\bxi, \ba} }} \propto p_{\tilde \sigma}((\cdot) - \ba - \bxi/\tNF)$ is again a Gaussian of width $\tsigma$, whose center is  \emph{shifted} by $\bxi/\tNF$ with respect to that of the original wave-packet $h_{\bxi, \ba}$. Accordingly, wave-packets propagate \emph{laterally} within the field-of-view upon action of the Fresnel-propagator.

\vspace{.5em}
\subsection{Resolution estimates via Gaussian wave-packets} \label{SS:GaussianResoEstimates}


We aim to use the analytical propagation formula \eqref{GaussWavePacketProp} for Gaussian wave-packets to derive \emph{upper}  bounds the achievable resolution in the reconstruction for \cref{IP:Complex} and \cref{IP:Real}. 
Since uniqueness always holds, see \cref{thm:Unique}, the only reasonable way to define resolution is via \emph{stability}: if we claim that the reconstruction has a resolution $1/r$, i.e.\ that features of the object down to a size $r > 0$ are faithfully recovered, then the reconstruction should be \emph{stable}  to perturbations of the object $h$ by any function $\tilde h $ that varies on lengthscales $r$, i.e.\ the induced contrast in the data
%
should be sufficiently large compared to $\norm{ \tilde h }$. By the hierarchy-of-difficulty of the considered inverse problems and linearity of $\cD$, a necessary condition for this to hold is that $\norm{ \cD( h + \tilde h)|_K - \cD( h)|_K }/\norm{ \tilde h }  =  \norm{ \cD(\tilde h)|_K }/\norm{ \tilde h }$ 
is non-negligible.  

Gaussian wave-packets $\tilde h = h_{\bxi, \bx_0}$ of frequency $|\bxi| \leq \pi/r$ constitute \emph{special} perturbations varying on lengthscales $\gtrsim r$. 
Thus, we can  derive \emph{upper}, i.e.\ possibly optimistic bounds on the achievable resolution $1/r$ by identifying parameter-regimes, for which $\norm{ \cD( h_{\bxi, \bx_0})|_K } / \norm{ h_{\bxi, \bx_0} }$ is negligibly small.


\vspace{.5em}
\subsubsection{Resolution for complex-valued images} \label{SSS:GaussianResoEstimatesComplex}

We study \cref{IP:Complex}(a) for a square detection-domain $K :=  [-\frac 1 2; \frac 1 2]^m$, $\Omega \subset K$.
In this setting, the unknown image $h \in L^2(\Omega)$ is complex-valued so that Gaussian wave-packets $\tilde h = h_{\bxi, \bx_0}$ of the form \eqref{GaussWavePacket}  centered at some point $\bx_0 \in \Omega$ constitute admissible perturbations\footnotemark[2].
 As seen from \eqref{GaussWavePacketProp}, the center of the Gaussian is then shifted to the point $\bx_{\textup{prop}} := \bx_0 + \bxi / \tNF $ upon Fresnel-propagation. Accordingly, if we consider wave-packets of larger and larger frequency $|\bxi|$, then the propagated wave-packet will eventually leave the detection domain, as visualized in \cref{fig:WavepacketPropSequenceComplex}.
More quantitatively, upon defining the path-length from a point $\bx $ to the detector-boundary $\partial K$  along a direction $\bn$,
\begin{align}
 \dist\!_{\bn} (\bx, \partial K) &= \inf \left\{y \geq 0: \bx + y \bn \notin K \right\} \MTEXT{for} \bx, \bn \in \mR^m: | \bn | = 1,
\end{align}
 the propagated center $\bx_{\textup{prop}}$ is inside  $K$ if and only if $\dist\!_{\bxi/|\bxi|} (\bx_0, \partial K) \leq |\bxi| / \tNF $. If $\bx_{\textup{prop}} \in K$, then the induced data-contrast is non-negligible: 
\begin{align} 
 \norm{\cD(h_{\bxi, \bx_0})|_K} \geq 2^{-m/2} \norm{ h_{\bxi, \bx_0} } \MTEXT{if} \bx_{\textup{prop}} \in K. \label{eq:WavePacketContrastEstimateInner}
\end{align}
On the contrary, if $\bx_{\textup{prop}} \notin K$ with distance $\dist(\bx_{\textup{prop}}, K) \gtrsim \tsigma $ greater than the propagated width $\tsigma$ of the wave-packet, then the contrast may be quite small:
\begin{align}
   \Norm{ \cD \parens{ h _{\bxi, \bx_0} }| _{K} }    \leq   \frac 1 2 \erfc \bbparens{ \frac{ \dist(\bx_{\textup{prop}}, K)   }{ \tsigma } }^{\frac 1 2}  \Norm{ h _{\bxi, \bx_0}  }.
  \label{eq:WavePacketContrastEstimateOuter}
\end{align}
As the complementary error function $\erfc(x)$ decays very fast for $x \gtrsim 1$, \eqref{eq:WavePacketContrastEstimateOuter} shows that the perturbation $h \mapsto h + h_{\bxi, \bx_0}$ is practically \emph{invisible} in the data $\cD(h + h_{\bxi, \bx_0})|_K$ if $|\bxi|$ is sufficiently large. In other words, oscillations at $\bx_0$ above a certain cutoff-frequency cannot be \emph{resolved}.

\begin{figure}[hbt!]
 \centering
 \includegraphics[width=\textwidth]{./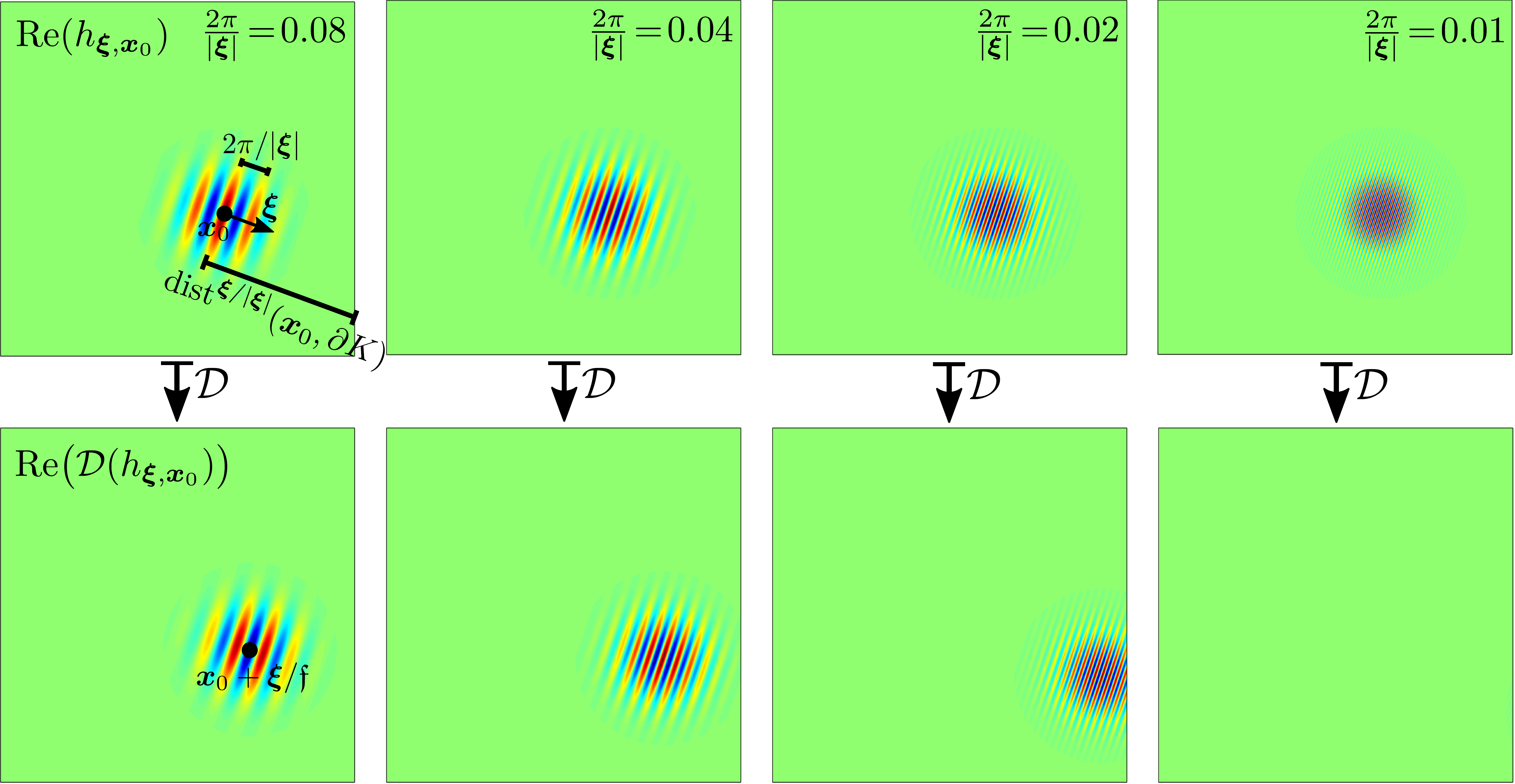}
 \caption{Propagation of a Gaussian wave-packet $h_{\bxi, \bx_0}$ for $m = 2$, $K = [-\frac 1 2; \frac 1 2]^2 $, $\tNF = 10^3$, $\sigma = 0.08$. Plotted are the real-parts of the complex-valued wave-packet (top row) and its propagated version $\cD(h_{\bxi, \bx_0})|_K$ (bottom row) computed via  \eqref{GaussWavePacketProp} for different frequencies $|\bxi|$. As $|\bxi|$ increases from left to right, the propagated wave-packet $\cD(h_{\bxi, \bx_0})$ is more and more  shifted with respect to $h_{\bxi, \bx_0}$ until it leaves the field of view $K$ (right-most column)  and is thus practically invisible to the considered imaging setup. The linear colorscale is identical in all images.} 
 \label{fig:WavepacketPropSequenceComplex}
\end{figure}

\footnotetext[2]{We ignore that the Gaussian wave-packet is technically not compactly supported and thus $h + h_{\bxi, \bx_0} \notin L^2(\Omega)$. Note, however, that $h_{\bxi, \bx_0}|_\Omega \approx h_{\bxi, \bx_0}$ up to a very small $L^2$-error given that $\bx_0$ is sufficiently far from the boundary of $\Omega$ in units of the Gaussian's width $\sigma$.}

The construction reveals that the local resolution $1/r(\bx_0)$ at a point $\bx_0$ is closely related to the distance to the detector-boundary $\dist  (\bx, \partial K)  = \min_{|\bn| = 1}  \dist\!_{\bn} (\bx, \partial K)$:
\vspace{.5em}
\begin{itemize}
 \item[$\boldsymbol 1$] For all Gaussian wave-packets $h_{\bxi, \bx_0}$ with $|\bxi| < \tNF \dist  (\bx_0, \partial K)$, the propagated center $\bx_{\textup{prop}}$ lies within the detection-domain $K$  \vspace{.25em}
 \item[$\boldsymbol 2$] For all frequencies $\xi > \tNF \dist  (\bx, \partial K)$, there exists a wave-packet $h_{\bxi, \bx}$ with $|\bxi| = \xi$, such that the propagated center $\bx_{\textup{prop}}$ lies outside $K$ 
\end{itemize}
\vspace{.5em}
As wave-packets leaving the field-of-view $K$ correspond to non-resolvable lengthscales, these observations translate into a resolution estimate:

\vspace{.5em}
\begin{result}[Resolution limit for complex-valued image reconstruction] \label{res:ComplexReso}
 For $K$ convex and $\Omega\subset K$, stable reconstruction in \cref{IP:Complex} can only be achieved down to a \emph{local resolution limit}
 \begin{align}
  1/ r(\bx) \lesssim  \frac {\tNF \dist(\bx, \partial K)} \pi \MTEXT{for all} \bx \in \Omega, \label{eq:GaussResoEstimateComplex}
 \end{align}
 where $r(\bx)$ denotes the smallest resolvable feature-size of the image $h$ at position $\bx$.
 
 In particular, for $K = \SquareDom$, the global maximum resolution is  bounded by the \emph{classical} Fresnel number  of the imaging setup (see \secref{SSS:MathFresnelNumber}): $\max_{\bx \in K} 1/ r(\bx) \lesssim  {\tNF}/{(2\pi)} = \F$. 
\end{result}
\vspace{.5em}

The resolution limit stated in \cref{res:ComplexReso} is \emph{isotropic} -- the resolution for features along a specific direction may be higher. 
\Cref{fig:WavepacketResolution}(a) shows the spatially varying resolution according to the estimate \eqref{eq:GaussResoEstimateComplex} for the exemplary setting $m = 2$, $K = [-\frac 1 2 ; \frac 1 2 ]^2$, $\tNF = 10^4$. \HIGHLIGHT{Note that the maximum resolution $\max_{\bx\in K} 1/r(\bx) = \F$ coincides with predictions according to Abbe's diffraction limit if the detector-size defines the numerical aperture, compare \cite{Nugent2010coherent,LatychenskaiaEtAl2012_HoloMeetsCDI(withHoloResolution)}. 
 Interestingly, however, the resolution only attains this optimum in the very center of the FoV as it decreases towards the detector-edges.}

\begin{figure}[hbt!]
 \centering
 \includegraphics[width=\textwidth]{./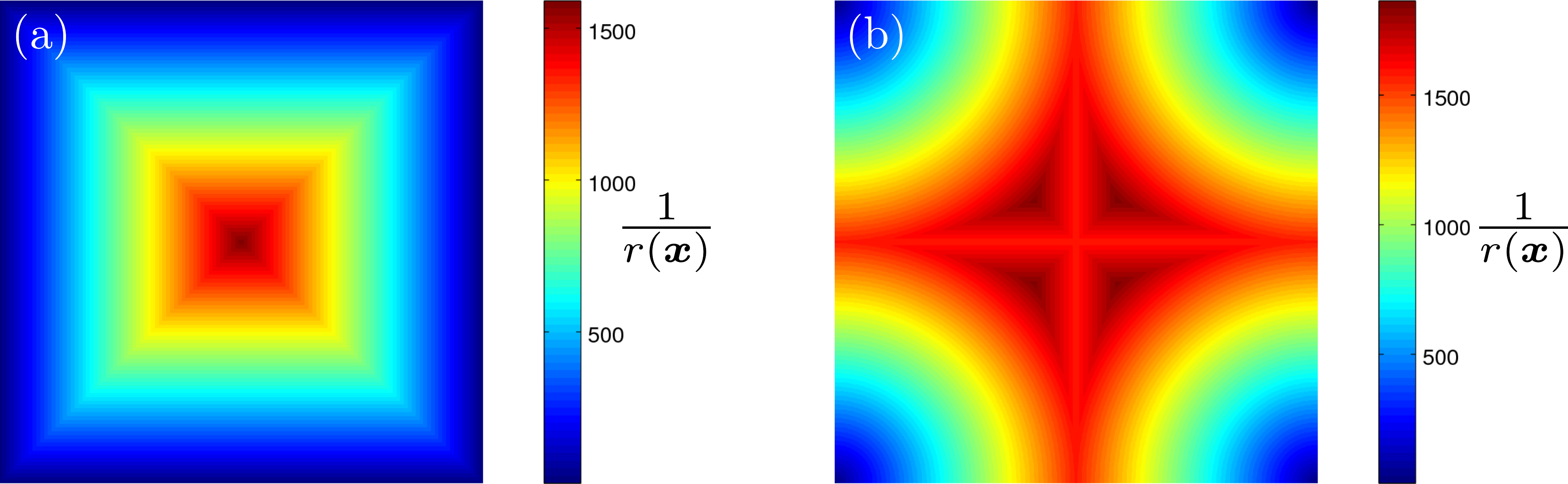}
 \caption{(a) Upper bound on the stably reconstructible local resolution $1/r(\bx)$ in \cref{IP:Complex} (complex-valued images) for $m = 2$, $K = [-\frac 1 2; \frac 1 2]^2 $, $\tNF = 10^4$ according to the estimate \eqref{eq:GaussResoEstimateComplex}. (b) Same plot for \cref{IP:Real}, i.e.\ for real-valued image-reconstruction, according to the estimate \eqref{eq:GaussResoEstimateReal}.}
 \label{fig:WavepacketResolution}
\end{figure}

\vspace{.5em}
\subsubsection{Resolution for real-valued images} \label{SSS:GaussianResoEstimatesReal}

In the case of \cref{IP:Real}(a), real-valued images are to be reconstructed so that complex-valued Gaussian wave-packets are no longer admissible perturbations. Accordingly, we study \emph{real-valued} wave-packets. Such signals are given by a superposition of  two Gaussian wave-packets with wavevectors $\bxi$ and $-\bxi$:
 \begin{align}
  h_{\bxi, \ba}^{\textup{real}}(\bx) &:=  \cos\bparens{ \bxi  \cdot (\bx - \ba) + \beta } p_\sigma(\bx - \ba) \nnl
  &= \Re \Parens{ \E^{\I \beta} h _{\bxi, \ba}(\bx)  } = \sfrac 1 2 \Parens{\E^{\I \beta} h _{\bxi, \ba}(\bx) +  \E^{-\I \beta}h _{-\bxi, \ba}(\bx)}
 \end{align}
 for $\bx, \bxi, \ba \in \mR^m, \beta \in [0; 2\pi)$.
Using \eqref{GaussWavePacketProp} and linearity of the Fresnel-propagator,  an analytical propagation formula is obtained for $h_{\bxi, \ba}^{\textup{real}}$:
\begin{align}
 \cD\bparens{h_{\bxi, \ba}^{\textup{real}}}(\bx) &= \frac{ \E^{\I \beta} } 2 \cD\Parens{h_{\bxi, \ba} }(\bx)   + \frac{ \E^{-\I \beta} } 2 \cD\Parens{h_{-\bxi, \ba} }(\bx) \nnl
 &= \frac{ \E^{\I \beta} } 2   \nu_{\bxi}\Parens{\bx - \ba - \bxi/\tNF}  p_{\tilde \sigma}(\bx - \ba - \bxi/\tNF) +  \frac{ \E^{-\I \beta} } 2   \nu_{-\bxi}\Parens{\bx - \ba + \bxi/\tNF}  p_{\tilde \sigma}(\bx - \ba + \bxi/\tNF) \label{eq:GaussWavePacketRealProp}
\end{align}

\begin{figure}[hbt!]
 \centering
 \includegraphics[width=\textwidth]{./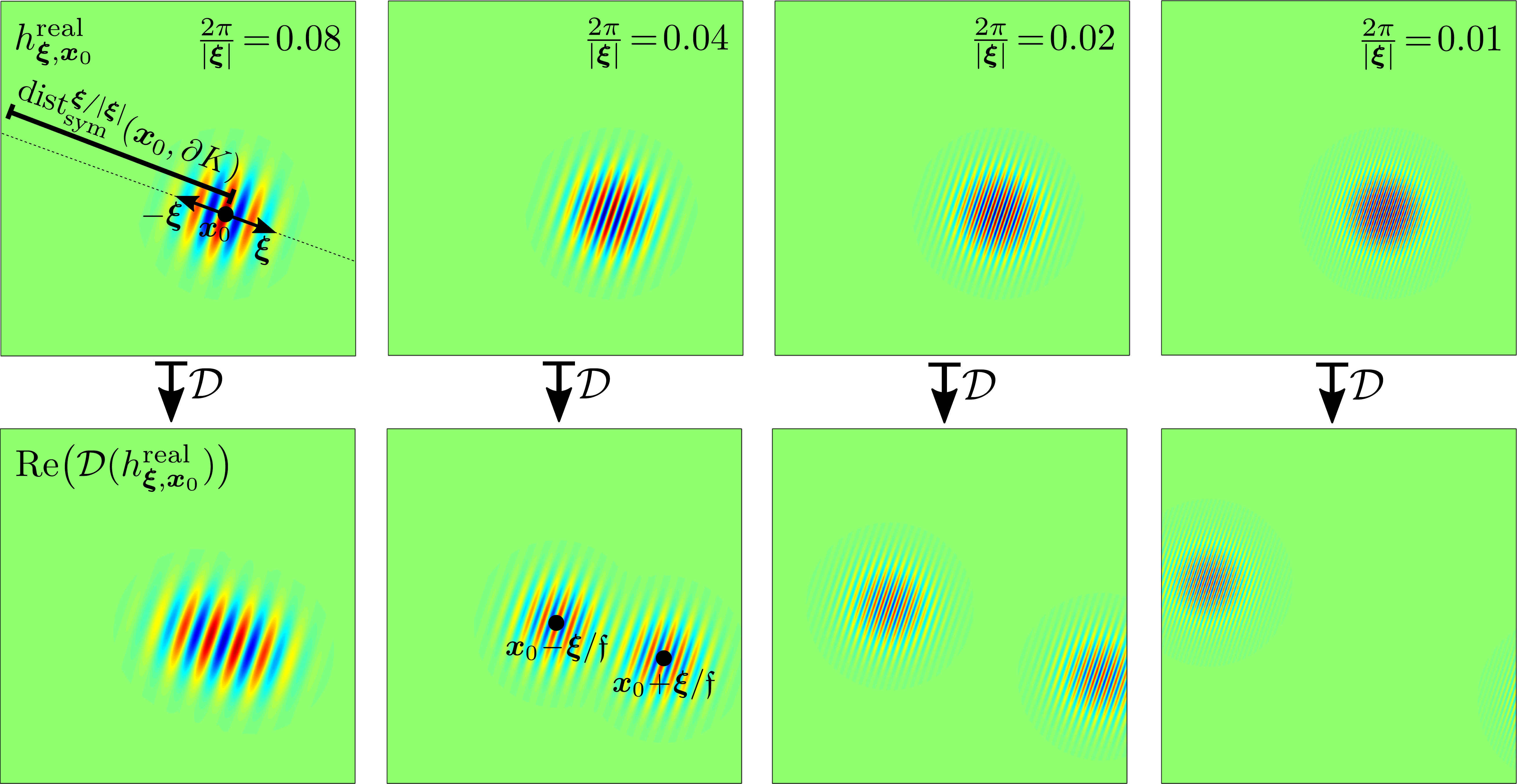}
 \caption{Analogue of \cref{fig:WavepacketPropSequenceComplex} for a real-valued wave-packet $h_{\bxi, \bx_0}^{\textup{real}}$. Upon Fresnel-propagation, such split up into two wave-packets that laterally propagate along opposite directions $\pm \bxi$. Consequently, the induced data contrast $\cD(h_{\bxi, \bx_0}^{\textup{real}})|_K$ is non-negligible under milder conditions than in the complex-valued case. For details, see text.}
 \label{fig:WavepacketPropSequenceReal}
\end{figure}

The analytical solution \eqref{eq:GaussWavePacketRealProp} reveals surprising features of the propagated signal:
upon propagation, the wave-packet splits up into two packets propagating into opposite directions $\pm \bxi$ as visualized in \cref{fig:WavepacketPropSequenceReal}. This has important consequences in terms of stability: if an object $h \in L^2(\Omega, \mR)$ is perturbed by a real-valued wave-packet $h_{\bxi, \bx}^{\textup{real}}$ at some point $\bx \in K$, then this perturbation manifests non-negligibly in the data $\cD(h + h_{\bxi, \bx}^{\textup{real}})|_K$ as long as \emph{either} of the two wave-packets remains within the field-of-view $K$.
For a point $\bx \in K$ and a direction $\bn$, we therefore introduce the following distance-measure:
\begin{align}
 \dist\! _{\textup{sym} }^{\bn} (\bx, \partial K) &= \inf \left\{y \geq 0: \bx + y \bn \notin K \textup{ and }  \bx - y \bn \notin K \right\} \\
 \dist\! _{\textup{sym} } (\bx, \partial K) &= \inf_{|\bn| = 1} \dist\! _{\textup{sym} }^{\bn}  (\bx, \partial K ).
\end{align}
$ \dist\! _{\textup{sym} }^{\bn} (\bx, \partial K)$ gives the larger length of the two line-segments $\{\bx \pm y \bn: y \geq 0 \} \cap K$, which connect $\bx$ with the boundary of $\partial K$ along $\bn$.
In view of wave-packets, the interpretation is simple: for $\bx, \bxi \in \mR^m$, the centers of \emph{both} propagating wave-packets forming $\cD\parens{h_{\bxi, \bx}^{\textup{real}}}$ lie outside $K$ if and only if $\dist\! _{\textup{sym} }^{\bxi/|\bxi|} (\bx, \partial K) < |\bxi|/\tNF$. Hence, the following relations hold true:
 \vspace{.5em}
\begin{itemize}
 \item[$\boldsymbol 1$] For all wave-packets $h_{\bxi, \bx}^{\textup{real}}$ with $|\bxi| < \tNF \dist\! _{\textup{sym} } (\bx, \partial K)$, the center of one of the propagating wave-packets lies within $K$. \vspace{.25em}
 \item[$\boldsymbol 2$] For all frequencies $\xi > \tNF \dist\! _{\textup{sym} } (\bx, \partial K)$, there exists a wave-packet $h_{\bxi, \bx}^{\textup{real}}$ with $|\bxi| = \xi$, such that the center of both  wave-packets  lie outside of $K$.  
\end{itemize}
 \vspace{.5em}
Accordingly, the quantity $\dist\! _{\textup{sym} } (\bx, \partial K)$ yields an upper bound for the local resolution in the real-valued setting:

\vspace{.5em}
\begin{result}[Resolution limit for real-valued image reconstruction]  \label{res:RealReso}
 For $K$ convex and $\Omega\subset K$, stable reconstruction in \cref{IP:Real} can only be achieved down to a \emph{local resolution limit}
 \begin{align}
  1/ r(\bx) \lesssim  \frac {\tNF \dist\! _{\textup{sym} }(\bx, \partial K)} \pi \MTEXT{for all} \bx \in \Omega, \label{eq:GaussResoEstimateReal}
 \end{align}
 where $r(\bx)$ denotes the smallest resolvable feature-size of the image $\varphi$ at position $\bx$. 
\end{result}
\vspace{.5em}
For $m = 2$, $K = [-\frac 1 2 ; \frac 1 2 ]^2$, $\dist\! _{\textup{sym} }$ can be evaluated analytically:
\begin{align}
 \dist\! _{\textup{sym} } \bparens{ (x_1,x_2), \partial K } = \min \Big\{ \Bparens{ \bparens{\sfrac 1 2 - |x_1|}^2 + \bparens{\sfrac 1 2 - |x_2|}^2 }^{\frac 1 2}, \min_{j \in \{1,2\}} \max_{   \pm}  \sfrac 1 2 \pm x_j \Big\}
\end{align}
The resulting spatially varying resolution for $\tNF = 10^4$ is plotted in \cref{fig:WavepacketResolution}(b). Notably, the maximum resolution is attained slightly off-center and is higher than in complex-valued case, compare \cref{fig:WavepacketResolution}(a). Moreover, a high resolution $1/r \geq 1000$ is obtained within a much larger subdomain of the field of view $K$. Most prominently, the resolution in \cref{fig:WavepacketResolution}(b) even remains large near the detector boundary -- except for the  corners of $K$.
Yet, the maximum resolution remains  essentially bounded by $\max_{\bx\in K} 1/ r(\bx) \lesssim \F$.

\vspace{.5em}
\section{Locality estimates for complex-valued objects} \label{S:BoundsComplex}

The goal of the subsequent sections is to complement the (potentially) \emph{optimistic} resolution estimates from \secref{S:GaussObservations} with \emph{worst-case} bounds. Accordingly, we aim to prove that stable image reconstruction can indeed be achieved down to a certain resolution. Note that this is necessarily more involved than the preceding analysis because stability has to be proven with respect to \emph{general} perturbations instead of considering just a special class like Gaussian wave-packets. 

\vspace{.5em}
\subsection{Basic idea and preliminaries} \label{SS:BoundsComplex-Prelim}

The principal difficulty in proving stability-estimates for bounded detection domains $K \subset \mR^m$ lies in the pronounced \emph{non-locality} of the Fresnel-propagator: according to \eqref{eq:FresnelPropConvForm}, it is given by a convolution with a kernel $\kF(\bx) \propto \exp(\I \tNF \bx / 2)$ that shows \emph{no spatial decay whatsoever!} Hence, Fresnel-propagation may transport object-information over arbitrary lateral distances in principle, i.e. features of the imaged object $h \in L^2(\Omega)$ with $\Omega \subset K$ may manifest far outside the field-of-view $K$ in the diffraction data $\cD(h)$. In addition to this non-locality in real-space, any restriction to $K \subsetneq \mR^m$ breaks the translational invariance of  $\cD$ and thus its diagonality, i.e.\ locality, in Fourier-space.

On the other hand, it has been seen in \secref{S:GaussObservations} that the distance, by which object-information is transported laterally, depends on the spatial frequencies of the signal. 
Accordingly, locality might be established by restricting to lower frequencies, i.e.\ to sufficiently smooth objects.

The principal idea of the subsequent analysis is to decompose the convolution kernel $\kF$ into an inner, local part, and an outer non-local part:
\begin{align}
 \kF = \kF | _{P} + \kF|_{\compl P} \MTEXT{for some} P \subset \mR^m.
\end{align}
For an object $h \in L^2(\Omega)$ supported in $\Omega \subset K \subset \mR^m$ and a suitably chosen $P$, the wave-field leaked outside $K$ depends only on the outer part: $\cD(h)|_{\compl K} = \parens{ \kF \ast h }|_{\compl K} = \parens{ \kF|_{\compl P} \ast h }|_{\compl K}$. This implies estimates of the form $\norm{\cD(h)|_{\compl K}} \leq \norm{\kF|_{\compl P} \ast h}$, which are diagonal in Fourier-space and thus simple to interpret as the norm of a \emph{filtered} object.

\paragraph{Notation: indicator functions}  For a set $ A \subset \mR^m$, let $\boldsymbol 1 _A: \mR^m \to \mR$ be defined by $\boldsymbol 1 _A(\bx) = 1$ if $\bx \in A$ and $\boldsymbol 1 _A(\bx) = 0$ otherwise.  


\vspace{.5em}
\subsection{Principal leakage estimates} \label{SS:BoundsComplex-ErrorBounds} 

Our principal leakage estimate is based on the insight that the frequency response of a restricted Fresnel-kernel $\kF | _{P}$ is readily computable:

\vspace{.5em}
\begin{lemma}[Frequency response of a restricted Fresnel-kernels]   \label{lem:LocalFresnel}
 Let $P \subset \mR^m$ be a measurable set such that  $\cD(  \boldsymbol 1 _{P} ) \in L^{\infty}(\mR^m)$ is well-defined and bounded. Let $\kF$ denote the convolution-kernel of the Fresnel-propagator. Then it holds for all $h \in L^2(\mR^m)$ that
      \begin{align}
      \cF \Parens{ \kF|_P \ast h }(\bxi) &=  \mF(\bxi) \cdot \cD(\boldsymbol 1_P) (\bxi/\tNF) \cdot  \cF \Parens{ h }(\bxi) \MTEXT{for almost all} \bxi \in \mR^m    \label{eq:lem-LocalFresnel-1}
      \end{align}
      and in particular for any measurable set $A \subset \mR^m$:
      \begin{align}
      \Norm{ (\kF|_P \ast h)|_A } &\leq   \Norm{\cD(\boldsymbol 1_P) (\cdot /\tNF) \cdot  \cF \Parens{ h }}.   \label{eq:lem-LocalFresnel-2} 
    \end{align} 
 \end{lemma}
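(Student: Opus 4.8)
The plan is to reduce both parts of the lemma to a single computation, namely the Fourier transform of the \emph{truncated} kernel $\kF|_P := \kF\cdot\boldsymbol 1_P$. Once $\cF(\kF|_P)$ is in hand, \eqref{eq:lem-LocalFresnel-1} drops out of the convolution theorem, and \eqref{eq:lem-LocalFresnel-2} follows from Plancherel's identity together with the fact that $\abs{\mF}\equiv 1$.

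First I would fix normalizations. Since $\cD(f)=\kF\ast f=\cF^{-1}(\mF\cdot\cF(f))$ and, with the convention of the paper, $\cF(f\ast g)=(2\pi)^{m/2}\cF(f)\cF(g)$, one gets $\cF(\kF)=(2\pi)^{-m/2}\mF$. The heart of the argument is the analogous identity for $\kF|_P$, expressed through $\cD(\boldsymbol 1_P)$. Writing $\kF = u_0\bparens{\tNF/(2\pi)}^{m/2}\nF$ with $\nF(\bx)=\exp(\I\tNF\bx^2/2)$ as in \eqref{eq:FresnelPropConvForm}, we have $\kF|_P = u_0\bparens{\tNF/(2\pi)}^{m/2}\,\nF\cdot\boldsymbol 1_P$, so it suffices to compute $\cF(\nF\cdot\boldsymbol 1_P)$ --- and this is precisely what the alternate form \eqref{eq:FresnelPropAltForm} of the propagator provides. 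Applying \eqref{eq:FresnelPropAltForm} to $f=\boldsymbol 1_P$ (legitimate in the distributional sense by \eqref{eq:FresnelPropDistributions}) gives $\cD(\boldsymbol 1_P)(\bx)=u_0\tNF^{m/2}\nF(\bx)\,\cF(\nF\cdot\boldsymbol 1_P)(\tNF\bx)$; solving for $\cF(\nF\cdot\boldsymbol 1_P)$, substituting $\bxi=\tNF\bx$, and using the elementary identity $\nF(\bxi/\tNF)=\mF(\bxi)^{-1}$ (the $u_0$-phases cancel), one arrives at $\cF(\kF|_P)(\bxi) = (2\pi)^{-m/2}\,\mF(\bxi)\,\cD(\boldsymbol 1_P)(\bxi/\tNF)$ for almost every $\bxi$. (As a sanity check, for $P=\mR^m$ this recovers $\cF(\kF)=(2\pi)^{-m/2}\mF$ since $\cD(1)=1$.) Plugging this into $\cF(\kF|_P\ast h)=(2\pi)^{m/2}\cF(\kF|_P)\cF(h)$ yields \eqref{eq:lem-LocalFresnel-1}.

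The step I expect to be the main obstacle is the functional-analytic bookkeeping around the convolution $\kF|_P\ast h$, which is exactly where the hypothesis $\cD(\boldsymbol 1_P)\in L^\infty(\mR^m)$ enters. Since $\kF|_P$ has constant modulus on $P$, it lies in $L^1(\mR^m)$ only when $\abs{P}<\infty$; for general $P$ the product $\kF|_P\ast h$ has to be understood as a tempered-distributional convolution (equivalently, via the Fourier multiplier $\cF(\kF|_P)$). When $\abs{P}<\infty$ there is nothing to check: $\kF|_P,\boldsymbol 1_P\in L^1(\mR^m)\cap L^2(\mR^m)$ and every manipulation above is classical. For general $P$ I would first establish \eqref{eq:lem-LocalFresnel-1} for \emph{compactly supported} $h\in L^2(\mR^m)$, where the distributional convolution theorem applies because one of the factors has compact support; there the right-hand side of \eqref{eq:lem-LocalFresnel-1} is the product of $\mF\cdot\cD(\boldsymbol 1_P)(\cdot/\tNF)\in L^\infty(\mR^m)$ with $\cF(h)\in L^2(\mR^m)$, hence lies in $L^2(\mR^m)$ --- which simultaneously shows that $\kF|_P\ast h$ is a well-defined $L^2$-function and proves the identity. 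Density of the compactly supported functions in $L^2(\mR^m)$, together with boundedness of the multiplication operator $g\mapsto\mF\cdot\cD(\boldsymbol 1_P)(\cdot/\tNF)\cdot g$ on $L^2(\mR^m)$, then extends \eqref{eq:lem-LocalFresnel-1} to all $h\in L^2(\mR^m)$.

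Finally, \eqref{eq:lem-LocalFresnel-2} is an immediate corollary: restricting an $L^2$-function to a measurable set $A$ cannot increase its norm, so $\norm{(\kF|_P\ast h)|_A}\leq\norm{\kF|_P\ast h}$; by Plancherel and \eqref{eq:lem-LocalFresnel-1} the latter equals $\norm{\mF\cdot\cD(\boldsymbol 1_P)(\cdot/\tNF)\cdot\cF(h)}$, and since $\abs{\mF(\bxi)}=1$ for every $\bxi$ the unimodular factor $\mF$ may be dropped, leaving $\norm{\cD(\boldsymbol 1_P)(\cdot/\tNF)\cdot\cF(h)}$ as claimed.
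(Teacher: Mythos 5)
Your proposal is correct and follows essentially the same route as the paper: compute $\cF(\kF|_P)$ by applying the alternate form \eqref{eq:FresnelPropAltForm} to $\boldsymbol 1_P$, invoke the (distributional) convolution theorem on a dense class of $h$, extend by $L^2$-continuity using $\cD(\boldsymbol 1_P)\in L^\infty$, and deduce \eqref{eq:lem-LocalFresnel-2} from Plancherel, $\abs{\mF}\equiv 1$, and monotonicity of restriction. The only cosmetic difference is the dense subclass used (compactly supported $L^2$-functions rather than Schwartz functions), and your bookkeeping of the $(2\pi)^{\pm m/2}$ factors is in fact cleaner than the paper's.
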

\vspace{.5em}
\begin{proof}
 By the assumption $\cD(  \boldsymbol 1 _{P} ) \in L^{\infty}(\mR^m)$, both sides of the equation \eqref{eq:lem-LocalFresnel-1} are continuous in $h$ with respect to the $L^2$-norm. Hence, it is sufficient to prove the claim for Schwartz-functions $h \in \sS(\mR^m)$ by denseness of these in $L^2(\mR^m)$.
 
 For $h \in \sS(\mR^m)$, the convolution $\kF|_P \ast h $ is well-defined in a pointwise sense but can also be regarded as convolution between a Schwartz-function and a tempered distribution $\kF|_P \in \sS(\mR^m)'$. Accordingly, the convolution theorem holds, i.e.\
 \begin{align}
  \cF \Parens{ (\boldsymbol 1 _{P} \cdot \kF) \ast h } = (2\pi)^{-m/2} \cF(\boldsymbol 1 _{P} \cdot \kF) \cdot \cF(h) \label{eq:lem-LocalFresnel-pf1} 
 \end{align}
 in a distributional sense. Recalling that the alternate form of the Fresnel-propagator \eqref{eq:FresnelPropAltForm} remains valid for tempered distributions, we get
 \begin{align}
   \cF( \boldsymbol 1 _P \cdot \kF ) = u_0 \tNF^{\frac m 2 } \cF( \boldsymbol 1 _{P} \cdot \nF )  &= \frac 1 { \nF\Parens{\cdot/\tNF }}  \Parens{ u_0 \tNF^{\frac m 2 } \nF\Parens{\cdot/\tNF }  \cdot \cF( \boldsymbol 1 _P \cdot \nF ) \Parens{ \tNF \Parens{\cdot/\tNF } }  } \nnl
  &\stackrel{\eqref{eq:FresnelPropAltForm}}= \mF \cdot \cD(  \boldsymbol 1 _P ) (\cdot /\tNF )   \label{eq:lem-LocalFresnel-pf2} 
 \end{align}
 Inserting \eqref{eq:lem-LocalFresnel-pf2} into \eqref{eq:lem-LocalFresnel-pf1} yields \eqref{eq:lem-LocalFresnel-1}.
 The inequality \eqref{eq:lem-LocalFresnel-2} now follows by using unitarity of the Fourier transform along with the observations that $\mF$ has constant modulus 1 and that the restriction-operation $f \mapsto f|_A$ is non-increasing in the $L^2$-norm:
 \begin{align}
   \Norm{ (\kF|_P \ast h)|_A } &\leq  \Norm{ \kF|_P \ast h } = \Norm{\cF( \kF|_P \ast h ) } \stackrel{\eqref{eq:lem-LocalFresnel-1}}= \Norm{\mF \cdot \cD(\boldsymbol 1_P) (\cdot /\tNF) \cdot  \cF \Parens{ h }} \nnl
   &= \Norm{ \cD(\boldsymbol 1_P) (\cdot /\tNF) \cdot  \cF \Parens{ h }}. 
 \end{align}
\end{proof}
\vspace{.5em}

A surprising feature of \cref{lem:LocalFresnel} is that $\cD(\ldots)$ occurs as a factor in \emph{Fourier-space}. Similar as \cref{lem:FresnelPropFreqShift}, this reveals an interesting real-space-Fourier-space-duality of the Fresnel-propagator.
Using \cref{lem:LocalFresnel}, we may derive leakage estimates as outlined in \secref{SS:BoundsComplex-Prelim}:

\vspace{.5em}
\begin{theorem}[Principal leakage estimate] \label{thm:PrincipalLeakage}
 Let $K, \Omega, P_{\leak} \subset \mR^m$ be measurable sets such that the boundary $\partial K$ has Lebesgue-measure zero and $\Omega + P_{\leak} = \{ \bx + \by : \bx \in \Omega , \by \in P_{\leak} \} \subset K$. Moreover, let $\cD(  \boldsymbol 1 _{\compl P_{\leak}} ) \in L^{\infty}(\mR^m)$. 
 Then it holds for all $h \in L^2(\Omega)$
      \begin{align}
      \Norm{  \cD \Parens{ h }|_{\compl K} } \leq  \bnorm{ \hat p^{\leak} \cdot  \cF \Parens{ h } }, \qquad  p^{\leak}( \bxi ) := \Abs{ \cD(  \boldsymbol 1 _{\compl P_{\leak}} ) (\bxi /\tNF ) }.   \label{eq:thm-PrincipalLeakage-1} 
    \end{align}
 \end{theorem}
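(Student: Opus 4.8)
The plan is to turn the leaked field $\cD(h)|_{\compl K}$ into a plain Fourier multiplier acting on $h$, by exploiting the kernel decomposition $\kF = \kF|_{P_{\leak}} + \kF|_{\compl P_{\leak}}$ announced in \secref{SS:BoundsComplex-Prelim}. First I would observe that the hypothesis $\cD(\boldsymbol 1_{\compl P_{\leak}}) \in L^\infty(\mR^m)$ also controls the complementary indicator: since $\boldsymbol 1_{P_{\leak}} + \boldsymbol 1_{\compl P_{\leak}} \equiv 1$ and $\cD(1) = 1$ by \eqref{eq:FresnelPropDistributions}, linearity of $\cD$ on $\sS(\mR^m)'$ gives $\cD(\boldsymbol 1_{P_{\leak}}) = 1 - \cD(\boldsymbol 1_{\compl P_{\leak}}) \in L^\infty(\mR^m)$ as well. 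Hence \cref{lem:LocalFresnel} is applicable both with $P = P_{\leak}$ and with $P = \compl P_{\leak}$, so $T_{\textup{in}}\colon h \mapsto \kF|_{P_{\leak}} \ast h$ and $T_{\textup{out}}\colon h \mapsto \kF|_{\compl P_{\leak}} \ast h$ extend to bounded operators on $L^2(\mR^m)$; by \eqref{eq:lem-LocalFresnel-1} their Fourier symbols add up to $\mF(\bxi)\cdot\cD(\boldsymbol 1_{P_{\leak}} + \boldsymbol 1_{\compl P_{\leak}})(\bxi/\tNF) = \mF(\bxi)$, i.e.\ $T_{\textup{in}} + T_{\textup{out}} = \cD$.

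The heart of the argument is a support statement: for $h \in L^2(\Omega)$ the inner part $T_{\textup{in}}h$ vanishes a.e.\ on $\compl K$. I would first establish this for $h \in L^2(\Omega) \cap L^1(\mR^m)$ — in particular for compactly supported $h$ — where $T_{\textup{in}}h$ coincides a.e.\ with the absolutely convergent pointwise convolution $\bx \mapsto \int_{P_{\leak}} \kF(\by)\, h(\bx - \by)\,\D\by$. (This integral converges because $\kF$ is bounded and $h \in L^1$; its identification with the spectrally-defined $T_{\textup{in}}h$ is obtained by approximating $h$ by Schwartz functions converging to it in $L^1 \cap L^2$, on which the two notions of convolution agree, together with $\norm{\kF}_\infty < \infty$ giving uniform convergence of the pointwise convolutions.) Now if $\bx \notin \Omega + P_{\leak}$, then $\bx - \by \notin \Omega$ for every $\by \in P_{\leak}$, and since $\supp h \subset \Omega$ this forces $h(\bx - \by) = 0$; hence the integrand vanishes identically and $T_{\textup{in}}h = 0$ on $\compl(\Omega + P_{\leak})$, which contains $\compl K$ because $\Omega + P_{\leak} \subset K$. (The assumption that $\partial K$ is Lebesgue-null only serves to let the $L^2$-restriction $f \mapsto f|_{\compl K}$ be read interchangeably with respect to $\compl K$ and to the complement of $\overline K$.)

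Putting the pieces together, for $h \in L^2(\Omega) \cap L^1(\mR^m)$ one has
\[
 \cD(h)|_{\compl K} = (T_{\textup{in}}h)|_{\compl K} + (T_{\textup{out}}h)|_{\compl K} = (\kF|_{\compl P_{\leak}} \ast h)|_{\compl K},
\]
and \eqref{eq:lem-LocalFresnel-2} of \cref{lem:LocalFresnel}, applied with $P = \compl P_{\leak}$ and $A = \compl K$, bounds the right-hand side in $L^2$-norm by $\norm{\cD(\boldsymbol 1_{\compl P_{\leak}})(\cdot/\tNF)\cdot \cF(h)} = \norm{p^{\leak}\cdot \cF(h)}$, using that $p^{\leak} = \abs{\cD(\boldsymbol 1_{\compl P_{\leak}})(\cdot/\tNF)}$ pointwise. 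This is \eqref{eq:thm-PrincipalLeakage-1} for such $h$; the general case $h \in L^2(\Omega)$ then follows by density (truncating $h$ to $\Omega \cap B_R$) together with the fact that both sides of \eqref{eq:thm-PrincipalLeakage-1} are continuous on $L^2(\Omega)$ — the left-hand side because $\cD$ is an isometry and restriction is norm-nonincreasing, the right-hand side because $p^{\leak} \in L^\infty(\mR^m)$ and $\cF$ is unitary. The only genuinely delicate point is the bridge in the second paragraph between the spectrally-defined multiplier $T_{\textup{in}}$ and the real-space convolution that carries the support information; everything else is routine bookkeeping.
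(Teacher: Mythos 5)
Your proof is correct and follows essentially the same route as the paper's: decompose the Fresnel kernel as $\kF = \kF|_{P_{\leak}} + \kF|_{\compl P_{\leak}}$, show the inner part cannot contribute on $\compl K$ because $\Omega + P_{\leak} \subset K$, and bound the outer part via \cref{lem:LocalFresnel} with $A = \compl K$. The only (harmless) deviations are technical: you verify the vanishing of the inner convolution by a direct pointwise integral on $L^1 \cap L^2$ functions (which, as you note, even makes the $\partial K$ null-measure hypothesis superfluous), whereas the paper restricts to Schwartz functions and invokes the standard support-of-convolution inclusion $\supp(f \ast g) \subset \closure{\supp f + \supp g}$.
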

\vspace{.5em}
\begin{proof}
 By a similar continuity argument as in \cref{lem:LocalFresnel}
 it is sufficient to prove the claim for Schwartz-functions $h \in L^2(\Omega) \cap \sS(\mR^m)$. Then the convolution-form \eqref{eq:FresnelPropConvForm} of the  Fresnel-propagator may be used. Hence, we have
 \begin{align}
    \cD \Parens{ h }|_{\compl K} &=   \Parens{ \kF \ast h } |_{\compl K} =   \Parens{ \Parens{ \boldsymbol 1 _{P_{\leak}} \cdot \kF } \ast h } |_{\compl K} + \Parens{     \Parens{   \boldsymbol 1 _{\compl P_{\leak}} \cdot \kF } \ast h } |_{\compl K}    \label{eq:thm-PrincipalLeakage-pf1} 
 \end{align}
 According to standard results on the support of convolutions, it holds that	
\begin{align}
 \supp \Parens{ \Parens{ \boldsymbol 1 _{P_{\leak}} \cdot \kF } \ast h } &\subset \closure{ \supp \Parens{  \boldsymbol 1 _{P_{\leak}} \cdot \kF   } +  \supp \Parens{  h  } } \subset \closure{ P_{\leak} + \Omega } \subset \closure{ K }.  \label{eq:thm-PrincipalLeakage-pf2}
\end{align}
 \eqref{eq:thm-PrincipalLeakage-pf2} implies that $ \Parens{ \Parens{ \boldsymbol 1 _{P_{\leak}} \cdot \kF } \ast h } |_{\compl K}$ vanishes except for possibly the boundary $\partial K$. As $\partial K$ is a set of measure zero, $ \Parens{ \Parens{ \boldsymbol 1 _{P_{\leak}} \cdot \kF } \ast h } |_{\compl K} = 0$ holds in an $L^2$-sense. Thus, \eqref{eq:thm-PrincipalLeakage-pf1} yields
 \begin{align}
     \Norm{  \cD \Parens{ h }|_{\compl K}  }   &= \Norm{\Parens{  \Parens{   \boldsymbol 1 _{\compl P_{\leak}} \cdot \kF } \ast h } |_{\compl K}    }   \label{eq:thm-PrincipalLeakage-pf3}
 \end{align}
 Applying the bound \eqref{eq:lem-LocalFresnel-2}  to the right-hand side of \eqref{eq:thm-PrincipalLeakage-pf3} now yields the assertion.
\end{proof}
\vspace{.5em}

 \Cref{thm:PrincipalLeakage} bounds the leaked wave-field $\cD \parens{ h }|_{\compl K}$ in terms of a \emph{filtering-operation}. In order to predict in which cases leakage is small or large, we need to understand the nature of the filter-response $\hat p^{\leak}$ that weights the Fourier-components of $h$ in \eqref{eq:thm-PrincipalLeakage-1}. If  $\Omega \subset K \subset \mR^m$, then the largest admissible set $P$ in \cref{thm:PrincipalLeakage} is some bounded domain containing 0, where the exact size of $P$ depends on the distance between $\Omega$ and $\partial K$. Let us assume that the size of $P$ is much larger than $1/\tNF^{\frac 1 2}$ as will be the typical case in the following. Then the indicator-function $\boldsymbol 1_{\compl P} $ is essentially preserved  upon Fresnel-propagation, i.e. $\cD(\boldsymbol 1_{\compl P} ) \approx \boldsymbol 1_{\compl P} = 1- \boldsymbol 1_{P}$ up to some oscillations near the boundary of $P$. Accordingly, $\hat p^{\leak} = \cD(\boldsymbol 1_{\compl P} )(\cdot/\tNF)$ acts as a \emph{high-pass filter}, essentially damping out all Fourier-frequencies within the domain $\tNF \cdot \boldsymbol 1_{P}$. Thus, the right-hand side of \eqref{eq:thm-PrincipalLeakage-1} is small for sufficiently \emph{smooth} objects $h$.
 
%
%

\vspace{.5em}
\subsection{Explicit leakage bounds for rectangular domains} \label{SS:BoundsComplex-ExplicitFilters} 

For general sets $P_{\leak}$, $\hat p^\leak$ from \cref{thm:PrincipalLeakage} 
cannot be computed explicitly. An exception is given by rectangular domains owing to the known Fresnel-transform of the Heaviside-function \HIGHLIGHT{$\theta =  \boldsymbol 1 _{\mR_{\geq 0}}$} in terms of \emph{Fresnel-integrals} \cite{LieblingEtAl2003Fresnelets}:
\begin{align}
 \cD( \theta ) (x) 
 &= \frac 1 2 - \frac{1- \I} 2 \bbparens{ \textup{C} \bbparens{ \frac{ -\tNF^{\frac 1 2 } x }{ \pi^{\frac 1 2}}  } + \I \textup{S} \bbparens{ \frac{ -\tNF^{\frac 1 2 } x }{ \pi^{\frac 1 2}}  } }  =: \tilde \theta( \tNF^{\frac 1 2 } x ) \nnl
 \textup{C}(x) &:= \int_{0}^{x} \cos\Parens{\frac \pi 2 t^2} \, \D t, \quad \textup{S}(x) := \int_{0}^{x} \sin\Parens{\frac \pi 2 t^2} \, \D t  \MTEXT{for all} x \in \mR   \label{eq:FresnelPropEdge}
\end{align}
Note that $\sF$ is an entire analytic function and bounded with $\max_{x\in \mR} |\sF(x)| \leq 1.171$. 

By the separability- and isotropy-properties of the Fresnel-propagator, \eqref{eq:FresnelPropSeparable} and \eqref{eq:FresnelPropIsotropy}, the explicit solution generalizes to half-spaces $H_{a, \bn} := \{ \bx \in \mR^m: \bn \cdot \bx \geq a \}$ in arbitrary dimensions with $a \in \mR $ and $\bn \in \mS^{m-1} = \{\bx\in \mR^m: |\bn| = 1\} $:
\begin{align}
 \cD( \boldsymbol 1 _{H_{a, \bn}} ) (\bx) =  \sF \bparens{ \tNF^{\frac 1 2 } ( \bn \cdot \bx - a) } \MTEXT{for all} \bx \in \mR^m. \label{eq:FresnelPropHalfspace}
\end{align}
Using linearity of $\cD$, \eqref{eq:FresnelPropEdge} furthermore yields the Fresnel-transform of intervals:
\begin{align}
 \cD( \boldsymbol 1 _{[-\Delta; \Delta]} ) (x) &=  \cD( \boldsymbol 1 _{[\Delta; \infty)} ) (x) -  \cD( \boldsymbol 1 _{[-\Delta; \infty)} ) (x) = \sF \Parens{ \tNF^{\frac 1 2 } ( x - \Delta) } - \sF \bparens{ \tNF^{\frac 1 2 } ( x + \Delta) }  \nnl
 &=  \sF \bparens{  \tNF^{\frac 1 2 }  x -  \tNF_\Delta^{\frac 1 2 }  }- \sF \bparens{  \tNF^{\frac 1 2 }  x +  \tNF_\Delta^{\frac 1 2 }  } =: \iFd (\tNF^{\frac 1 2} x) \label{eq:FresnelPropInt}
\end{align}
Here, we introduced the Fresnel number associated with the lateral lengthscale $\Delta > 0$, $\tNFa {\Delta} := \Delta^2 \tNF$, compare \secref{SSS:MathFresnelNumber}.
Again by the separability of the Fresnel-propagator, this generalizes to stripe-shaped domains $S_{\Delta, \bn} := \{ \bx \in \mR^m: -\Delta \leq \bn \cdot \bx \leq \Delta \}$ and squares:
\begin{align}
\cD( S_{\Delta, \bn} ) (\bx) &=  \iFd \Parens{ \tNF^{\frac 1 2} (\bn\cdot \bx) } =: \iFda {\bn} (\bx), \qquad \iFda j :=  \iFda {\be_j}  \label{eq:FresnelPropStripe} \\
\cD( \boldsymbol 1 _{[-\Delta; \Delta]^m} ) (\bx) &=  \prod_{j = 1}^m \cD( \boldsymbol 1 _{[-\Delta; \Delta]} ) (x_j) = \prod_{j = 1}^m \iFd (\tNF^{\frac 1 2} x_j)  = \prod_{j = 1}^m \iFda j (\tNF^{\frac 1 2} \bx )\label{eq:FresnelPropSquare}
\end{align}
for all $\bx = (x_1,\ldots, x_m) \in \mR^m $, where $\be_j\in \mR^m$ denotes the $j$-th unit normal vector.
Finally, indicator functions of \emph{complements} are simple to propagate using linearity and $\cD(1) = 1$:
\begin{align}
  \cD(\boldsymbol 1_{\compl A} ) = \cD(1 - \boldsymbol 1_{ A} ) = \cD(1)  -  \cD(\boldsymbol 1_{ A} ) = 1 -  \cD(\boldsymbol 1_{ A} ) \MTEXT{for} A \subset \mR^m. \label{eq:FresnelPropCompl}
\end{align}
Using the formulas derived above, we can explicitly write down the filter from \cref{thm:PrincipalLeakage} for the important special case of square domains:

\vspace{.5em}
\begin{cor}[Leakage bound for square domains]   \label{cor:LeakageSquare}
Let $ K = \SquareDom $ and $\Omega = \SquareDomMin \Delta$ for some $0 < \Delta < \frac 1 2$. 
Then it holds for all $h \in L^2(\Omega)$
\begin{align}
\Norm{  \cD \Parens{ h }|_{\compl K} } \leq  \bnorm{\hat p^{\leak}_{\Box, \tNF, \tNFa {\Delta}} \cdot \cF(h)}, \qquad  \hat p^{\leak}_{\Box, \tNF, \tNFa {\Delta}}(\bxi) &:= \bbabs{ 1 - \prod_{j = 1}^m \iFda j ( \bxi /  \tNF^{\frac 1 2} ) }, \quad \bxi \in \mR^m \label{eq:cor-LeakageSquare-1}
\end{align}
 \end{cor}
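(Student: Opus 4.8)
The plan is to read this off from the principal leakage estimate, \cref{thm:PrincipalLeakage}, with the leakage-set taken to be the cube $P_{\leak} := [-\Delta;\Delta]^m$. With this choice the hypothesis $\Omega + P_{\leak}\subset K$ holds, in fact with equality: coordinate-wise, $\SquareDomMin{\Delta}$ shifted by $[-\Delta;\Delta]^m$ is exactly $\SquareDom = K$. The boundary $\partial K$ is a finite union of faces of the cube and hence Lebesgue-null, so the only remaining hypothesis of \cref{thm:PrincipalLeakage} left to verify is $\cD(\boldsymbol 1_{\compl P_{\leak}})\in L^\infty(\mR^m)$.

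For that I would use the explicit Fresnel transforms collected just before the corollary. By linearity and $\cD(1)=1$, i.e.\ \eqref{eq:FresnelPropCompl}, one has $\cD(\boldsymbol 1_{\compl P_{\leak}}) = 1 - \cD(\boldsymbol 1_{[-\Delta;\Delta]^m})$, and by separability \eqref{eq:FresnelPropSquare} the latter is an $m$-fold product of one-dimensional interval-transforms $\iFd(\tNF^{1/2}\,\cdot\,)$, each of which is, by \eqref{eq:FresnelPropInt}, a difference of two translates of the Fresnel-integral function $\sF$. Since $\sF$ is bounded with $\max_{\mR}|\sF|\le 1.171$ (stated after \eqref{eq:FresnelPropEdge}), each factor is bounded by $2\cdot 1.171$, so the finite product lies in $L^\infty(\mR^m)$; hence so does $\cD(\boldsymbol 1_{\compl P_{\leak}})$, as required.

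It then remains to identify the filter. \Cref{thm:PrincipalLeakage} gives $\Norm{\cD(h)|_{\compl K}} \le \bnorm{\hat p^{\leak}\cdot\cF(h)}$ with $p^{\leak}(\bxi) = |\cD(\boldsymbol 1_{\compl P_{\leak}})(\bxi/\tNF)|$; substituting the product form of $\cD(\boldsymbol 1_{\compl P_{\leak}})$ obtained above and tracking the rescaling $\bxi\mapsto\bxi/\tNF$ (so that the factor $\tNF^{1/2}$ inside the interval-transforms combines with the $1/\tNF$ to a single $1/\tNF^{1/2}$) turns $p^{\leak}$ into $\babs{1 - \prod_{j=1}^m \iFda j(\bxi/\tNF^{1/2})}$, which is precisely $\hat p^{\leak}_{\Box,\tNF,\tNFa{\Delta}}$ from the statement. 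This completes the argument.

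There is no real obstacle here: the corollary is a direct specialization of \cref{thm:PrincipalLeakage} once the explicit half-space/interval Fresnel transforms \eqref{eq:FresnelPropEdge}--\eqref{eq:FresnelPropSquare} are available. The only points demanding minor care are (i) checking the $L^\infty$-admissibility of $\cD(\boldsymbol 1_{\compl P_{\leak}})$, where the uniform bound on $\sF$ is used, and (ii) the bookkeeping of the various factors $\tNF^{1/2}$ that accumulate when the one-dimensional formulas are multiplied together and then evaluated at $\bxi/\tNF$.
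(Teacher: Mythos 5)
Your proposal is correct and follows exactly the paper's own route: choose $P_{\leak}=[-\Delta;\Delta]^m$, apply \cref{thm:PrincipalLeakage}, and identify the filter via \eqref{eq:FresnelPropSquare} and \eqref{eq:FresnelPropCompl}. The only difference is that you spell out the verification of the hypothesis $\cD(\boldsymbol 1_{\compl P_{\leak}})\in L^\infty(\mR^m)$ and the $\tNF^{1/2}$-bookkeeping, which the paper leaves implicit.
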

\begin{proof}
 If we set $P_{\leak} := [-\Delta; \Delta]^m$, the assumptions of \cref{thm:PrincipalLeakage} are satisfied.  
 The expression for the filter in \eqref{eq:cor-LeakageSquare-1} follows by using \eqref{eq:FresnelPropSquare} along with \eqref{eq:FresnelPropCompl}.
\end{proof}
\begin{figure}[hbt!]
 \centering
 \includegraphics[width=\textwidth]{./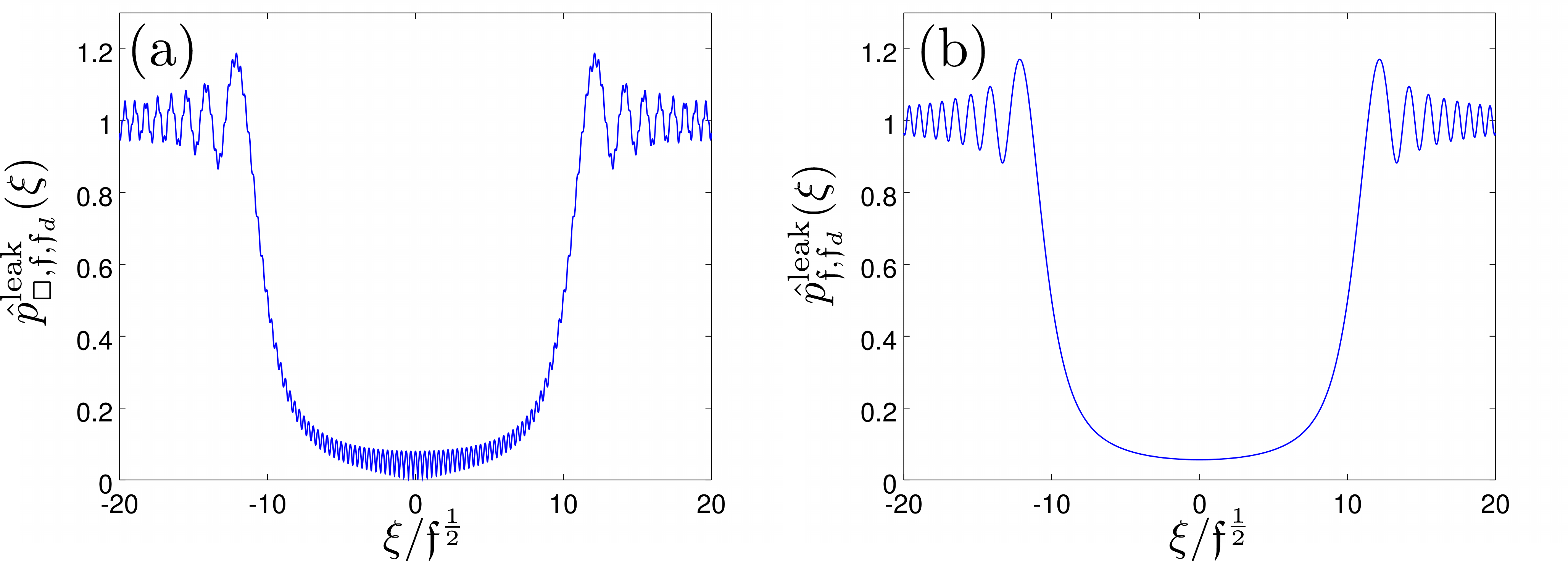}
 \caption{Plot of the leakage-filters for square-domains from (a) \cref{cor:LeakageSquare} and (b) \cref{thm:LeakageSquareSimple} for $m = 1$ dimensions and $\tNFa {\Delta} = 100$.}
 \label{fig:Filters}
\end{figure}

The filter-response $\hat p^{\leak}_{\Box, \tNF, \tNFa {\Delta}}$ from \cref{cor:LeakageSquare} is plotted in \cref{fig:Filters}(a) for an exemplary 1D-setting. It can be seen to be a \emph{high-pass} filter of width $\approx 2 (\tNF \cdot \tNFa {\Delta})^{1/2} = 2 \tNF \Delta$  in Fourier-space. Note that this width is in perfect agreement with the expected cut-off frequency from the Gaussian wave-packet analysis in \secref{SSS:GaussianResoEstimatesComplex} for the considered distance $\dist(\Omega, \partial K) = \Delta$ between object-domain and detector-boundary. However, $\hat p^{\leak}_{\Box, \tNF, \tNFa {\Delta}}$ is heavily oscillatory on fine scales and is \emph{not} everywhere $\leq 1$, although this would be reasonable by unitarity of the Fresnel-propagator. Another drawback of the filter-response  in \eqref{eq:cor-LeakageSquare-1} is that it varies 
in a non-trivial manner in higher dimensions. 

Both the oscillatory behavior and the complicated high-dimensional structure can be resolved by exploiting the simple rectangular geometry to obtain an alternative filter: 

\vspace{.5em}
\begin{theorem}[Leakage bound for square domains with simplified filter]   \label{thm:LeakageSquareSimple}
Within the setting of \cref{cor:LeakageSquare}, it holds for all $h \in L^2(\Omega)$
\begin{align}
  \Norm{  \cD \Parens{ h }|_{\compl K} }  &\leq  \bnorm{\hat p^{\leak}_{\tNF, \tNFa {\Delta}} \cdot \cF(h)}  , \qquad   \hat p^{\leak}_{\tNF, \tNFa {\Delta}}(\bxi)  := \bbparens{ \sum_{  j = 1}^m  \Abs{ \eFd \Parens{  \tNF^{-\frac 1 2} \be_j \cdot \bxi }  }^2 }^{\frac 1 2} \label{eq:thm-LeakageSquareSimple-1} \\
 \eFd(x) &:= \bparens{ \babs{\sF(x -\tNFa {\Delta}^{\frac 1 2})}^2  + \babs{\sF(-x -\tNFa {\Delta}^{\frac 1 2})}^2  }^{\frac 1 2 }  \label{eq:thm-LeakageSquareSimple-2}  
\end{align}
where $\be_j$ denotes the unit normal vector along the $j$-th dimension.
 \end{theorem}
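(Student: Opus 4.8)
The plan is to reduce the estimate to $2m$ applications of the principal leakage bound \Cref{thm:PrincipalLeakage}, one for each half-space attached to a face of the square $K = \SquareDom$, exploiting that a cubic detector decouples coordinate-wise.

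\emph{Step 1 (geometric decomposition).} First I would cover $\compl K$ by the $2m$ half-spaces $R_j := \{\bx\in\mR^m : x_j > \frac12\}$ and $L_j := \{\bx\in\mR^m : x_j < -\frac12\}$, $j=1,\dots,m$, so that $\compl K \subset \bigcup_{j=1}^m(R_j\cup L_j)$. Since $R_j$ and $L_j$ are disjoint for each fixed $j$, monotonicity and finite subadditivity of $f\mapsto\int|f|^2$ over this cover (which only overlaps across distinct $j$) give
\begin{align*}
 \Norm{\cD(h)|_{\compl K}}^2 \;\le\; \sum_{j=1}^m\Parens{\Norm{\cD(h)|_{R_j}}^2 + \Norm{\cD(h)|_{L_j}}^2},
\end{align*}
so it remains to bound each of the $2m$ terms on the right.

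\emph{Step 2 (per-face leakage bound).} For $\Norm{\cD(h)|_{R_j}}$ I would invoke \Cref{thm:PrincipalLeakage} with the set ``$K$'' there replaced by $\compl R_j = \{\bx:x_j\le\frac12\}$, whose boundary is a hyperplane and hence Lebesgue-null, and with $P_{\leak}:=\{\bx\in\mR^m:x_j\le\Delta\}$. Because $\Omega = \SquareDomMin{\Delta}\subset\{\bx:|x_j|\le\frac12-\Delta\}$, the required inclusion $\Omega+P_{\leak}\subset\{\bx:x_j\le\frac12\}$ holds, and $\cD(\boldsymbol 1_{\compl P_{\leak}}) = \cD(\boldsymbol 1_{\{x_j>\Delta\}})$ is bounded by the explicit half-space formula \eqref{eq:FresnelPropHalfspace}. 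Since $\cD(\boldsymbol 1_{\{x_j>\Delta\}})(\bx)=\sF(\tNF^{1/2}(x_j-\Delta))$ and $\tNF^{1/2}\Delta=\tNFa{\Delta}^{1/2}$, \Cref{thm:PrincipalLeakage} gives $\Norm{\cD(h)|_{R_j}}\le\Norm{q_j^{+}\cdot\cF(h)}$ with $q_j^{+}(\bxi):=\babs{\sF(\tNF^{-1/2}\be_j\cdot\bxi-\tNFa{\Delta}^{1/2})}$. The term $\Norm{\cD(h)|_{L_j}}$ is treated symmetrically with $P_{\leak}:=\{\bx:x_j\ge-\Delta\}$; now $\cD(\boldsymbol 1_{\{x_j<-\Delta\}})(\bx)=\sF(\tNF^{1/2}(-x_j-\Delta))$ by \eqref{eq:FresnelPropHalfspace} applied to the half-space with outward normal $-\be_j$, giving $\Norm{\cD(h)|_{L_j}}\le\Norm{q_j^{-}\cdot\cF(h)}$ with $q_j^{-}(\bxi):=\babs{\sF(-\tNF^{-1/2}\be_j\cdot\bxi-\tNFa{\Delta}^{1/2})}$.

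\emph{Step 3 (reassembly).} Adding the two squared bounds for fixed $j$ and using Plancherel, the combined weight acting on $|\cF(h)(\bxi)|^2$ is $q_j^{+}(\bxi)^2+q_j^{-}(\bxi)^2$, which equals $\babs{\eFd(\tNF^{-1/2}\be_j\cdot\bxi)}^2$ by the very definition \eqref{eq:thm-LeakageSquareSimple-2}. Summing over $j$ and collecting the weights recovers $\hat p^{\leak}_{\tNF,\tNFa{\Delta}}$ of \eqref{eq:thm-LeakageSquareSimple-1}, which is the claimed bound. No new estimate is needed: the argument is a structural decomposition plus a handful of one-line invocations of \Cref{thm:PrincipalLeakage} and \eqref{eq:FresnelPropHalfspace}. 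The only real pitfall is the bookkeeping — keeping the half-space orientations and offsets consistent so that the arguments of $\sF$ come out exactly as $\pm\tNF^{-1/2}\be_j\cdot\bxi-\tNFa{\Delta}^{1/2}$ rather than some mirrored or shifted variant — together with a routine check of the hypotheses of \Cref{thm:PrincipalLeakage}, notably that $h\in L^1(\mR^m)$ as well (immediate from boundedness of $\Omega$), which is implicitly used when that theorem passes through the convolution form \eqref{eq:FresnelPropConvForm}.
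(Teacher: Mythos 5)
Your proposal is correct and follows essentially the same route as the paper: cover $\compl K$ by the $2m$ half-spaces attached to the faces of the cube, apply \cref{thm:PrincipalLeakage} to each with a half-space $P_{\leak}$ shifted inward by $\Delta$, evaluate the resulting filters via the explicit half-space formula \eqref{eq:FresnelPropHalfspace}, and sum the squared bounds to recover $\eFd$. The sign/offset bookkeeping you flag as the main pitfall comes out exactly as in the paper's \eqref{eq:thm-LeakageSquareSimple-pf2}, so nothing is missing.
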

\vspace{.5em}
\begin{proof}
 Let $h \in L^2(\Omega)$.
  If we define the half-spaces $H_{j,\pm} :=  \mR^{j-1} \times \pm[-\frac 1 2 ; \infty) \times \mR^{m-j}$, then it holds that $\compl K = \bigcup_{j = 1, \pm }^m \compl{H_{j,\pm}}$. Thus,
  \begin{align}
   \Norm{  \cD \Parens{ h }|_{\compl K} } ^2 \leq \sum_{\pm, j = 1}^m  \Norm{ \cD(h)|_{H_{j,\pm}} }^2  \label{eq:thm-LeakageSquareSimple-pf1}
  \end{align}
  Upon setting $\tilde K := H_{j,\pm}$ and $P_{\leak} := \mR^{j-1} \times \pm [-\Delta ; \infty) \times \mR^{m-j}$,  \cref{thm:PrincipalLeakage} is applicable so that each of the squared norms on the right-hand side \eqref{eq:thm-LeakageSquareSimple-pf1} can be bounded via \eqref{eq:thm-PrincipalLeakage-1}:
  \begin{align}
     \Norm{ \cD(h)|_{H_{j,\pm}} }^2 &\leq \bip{\cF(h)}{ |\hat p^{\leak}_{\tNF, \tNFa {\Delta}, j, \pm}|^2 \cdot \cF(h)} \nnl
     \hat p^{\leak}_{\tNF, \tNFa {\Delta}, j, \pm}(\bxi) &= \cD( \boldsymbol 1_{\compl{P_{\leak}}}) (\bxi/\tNF) \stackrel{\eqref{eq:FresnelPropHalfspace}}=  \sF \bparens{ \pm \tNF^{-\frac 1 2} \be_j \cdot \bxi - \tNFa {\Delta}^{\frac 1 2} }. \label{eq:thm-LeakageSquareSimple-pf2}
  \end{align}
  Substituting \eqref{eq:thm-LeakageSquareSimple-pf2} into \eqref{eq:thm-LeakageSquareSimple-pf1} and using sesqui-linearity yields the assertion.
\end{proof}
\vspace{.5em}

\Cref{fig:Filters}(b) plots the alternate filter-response $\hat p^{\leak}_{\tNF, \tNFa {\Delta}}$ for the same 1D-setting as in \cref{fig:Filters}(a). The plot reveals that the filter-profiles are almost identical except that the oscillations are eliminated from the low-frequency regime. This makes it easier to derive bounds for $|\hat p^{\leak}_{\tNF, \tNFa {\Delta}}|$ in a given frequency-interval compared to the original filter $\hat p^{\leak}_{\Box, \tNF, \tNFa {\Delta}}$.

\vspace{.5em}
\subsection{Stability estimates}

By unitarity of the Fresnel-propagator $\cD$, \emph{upper} bounds on the wave-field leaked into $\compl K$ induce \emph{lower} bounds on the contrast  {within} the field-of-view $K$, 
i.e.\ stability estimates for the reconstruction of $h$ from  data $\cD(h)|_K$:

\vspace{.5em}
\begin{cor}[Stability estimates for square domains] \label{cor:StabilitySquare}
Let $ K = \SquareDom$ and $\Omega = \SquareDomMin \Delta$ for some $0 < \Delta < \frac 1 2$. 
Then it holds for all $h \in L^2(\Omega)$
\begin{align}
   \Norm{  \cD \Parens{ h }|_{ K} }^2 \geq  \bip{\cF(h)}{ \bparens{ 1 - \babs{ \hat p^{\leak} }^2 }  \cdot \cF(h)} \label{eq:cor-StabilitySquare-1}
\end{align}
with  $\hat p^{\leak} \in \{ \hat p^{\leak}_{\tNF, \tNFa {\Delta}}, \hat p^{\leak}_{\Box, \tNF, \tNFa {\Delta}} \}$ 
as defined in \cref{thm:LeakageSquareSimple,cor:LeakageSquare}.
\end{cor}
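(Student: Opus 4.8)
The plan is to read off the stated lower bound as the complementary statement to the leakage upper bounds already established in \cref{cor:LeakageSquare,thm:LeakageSquareSimple}, using only the unitarity \eqref{eq:FresnelPropUnitary} of $\cD$ and Plancherel's theorem. The key observation is that the detection-domain $K = \SquareDom$ and its complement $\compl K$ partition $\mR^m$ up to the boundary $\partial K$, which is a Lebesgue-null set; hence for any $f \in L^2(\mR^m)$ one has the orthogonal splitting $\norm{f}^2 = \norm{f|_K}^2 + \norm{f|_{\compl K}}^2$.

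First I would apply this splitting to $f = \cD(h)$ and invoke \eqref{eq:FresnelPropUnitary} together with unitarity of the Fourier transform to obtain
\begin{align}
 \Norm{\cD(h)|_K}^2 = \Norm{\cD(h)}^2 - \Norm{\cD(h)|_{\compl K}}^2 = \Norm{h}^2 - \Norm{\cD(h)|_{\compl K}}^2 = \Norm{\cF(h)}^2 - \Norm{\cD(h)|_{\compl K}}^2. \nonumber
\end{align}
Next I would bound the leaked term from above: since $\Omega = \SquareDomMin \Delta$ and $K = \SquareDom$ with $0 < \Delta < \frac12$, both \cref{cor:LeakageSquare} and \cref{thm:LeakageSquareSimple} apply and give $\Norm{\cD(h)|_{\compl K}} \leq \bnorm{\hat p^{\leak}\cdot \cF(h)}$ for the respective choice $\hat p^{\leak} \in \{ \hat p^{\leak}_{\tNF,\tNFa{\Delta}}, \hat p^{\leak}_{\Box,\tNF,\tNFa{\Delta}}\}$. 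Squaring and rewriting the right-hand side as a quadratic form, $\bnorm{\hat p^{\leak}\cdot \cF(h)}^2 = \bip{\cF(h)}{\babs{\hat p^{\leak}}^2\cdot \cF(h)}$, and inserting this into the displayed identity yields
\begin{align}
 \Norm{\cD(h)|_K}^2 \geq \bip{\cF(h)}{\cF(h)} - \bip{\cF(h)}{\babs{\hat p^{\leak}}^2\cdot \cF(h)} = \bip{\cF(h)}{\bparens{1 - \babs{\hat p^{\leak}}^2}\cdot\cF(h)}, \nonumber
\end{align}
which is exactly \eqref{eq:cor-StabilitySquare-1}.

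There is essentially no obstacle here; the only point requiring a word of care is the measure-zero argument for $\partial K$ (so that restriction to $K$ and to $\compl K$ are genuinely orthogonal complements in $L^2$), and the fact that $h \in L^2(\Omega) \subset L^2(\mR^m)$ so that all the invoked identities and the cited leakage bounds are legitimately available; both are immediate for the explicit rectangular geometry at hand.
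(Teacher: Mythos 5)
Your proposal is correct and follows essentially the same route as the paper's own proof: split $\Norm{\cD(h)}^2 = \Norm{\cD(h)|_K}^2 + \Norm{\cD(h)|_{\compl K}}^2$, use unitarity of $\cD$ and $\cF$, and bound the leaked term by the filters from \cref{cor:LeakageSquare,thm:LeakageSquareSimple}. The paper states this in one line; your version merely spells out the same steps.
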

\vspace{.5em}
\begin{proof}
 The claim follows from \cref{thm:LeakageSquareSimple,cor:LeakageSquare} as $  \Norm{  \cD \Parens{ h }|_{ K} }^2 =   \Norm{  \cD \Parens{ h }  }^2 -   \Norm{  \cD \Parens{ h }|_{ \compl K} }^2$ and $\Norm{  \cD \Parens{ h }  }^2 = \Ip{  \cF \Parens{ h }  }{  \cF \Parens{ h }  }$ as $\cD$ and $\cF$ are unitary.
\end{proof}
\vspace{.5em}

 \Cref{cor:StabilitySquare} gives lower- and upper bounds on the contrast on a square detector in terms of \emph{filtering} operations with explicitly known profile in Fourier-space. 
 It is tempting to interpret the bound as the norm of a low-pass-filtered version of $h$:
 \begin{align}
   \bip{\cF(h)}{ \bparens{ 1 - \babs{ \hat p^{\leak} }^2 }  \cdot \cF(h)} \textup{``=''} \bnorm{ \bparens{ 1 - \babs{ \hat p^{\leak} }^2 }^{\frac 1 2}  \cdot \cF(h)}^2
 \end{align}
 However, this is technically not correct because $|\hat p^{\leak}(\bxi)|$ typically attains values greater than 1 at frequencies above the cut-off $|\bxi| \geq \tNF \Delta$, see \cref{fig:Filters}. This means that the bound \eqref{eq:cor-StabilitySquare-1} indeed permits \emph{negative} contrast in certain Fourier-frequencies. While this is certainly counter-intuitive from a physical point-of-view, one has to cope with this peculiarity in order to make sense of the stability estimates.
 
 Since $\abs{ \hat p^{\leak} }$ is typically much smaller than 1 for low frequencies (compare \cref{fig:Filters}), the right-hand side of \eqref{eq:cor-StabilitySquare-1} will be positive for objects $h$ whose Fourier-transform $\cF(h)$ is sufficiently localized at low frequencies. Accordingly, a natural candidate for a class of functions that can be stably recovered from $\cD \Parens{ h }|_{ K}$ would be \emph{band-limited} ones, such that $\cF(h)$ vanishes above a certain maximum frequency, including all parts of the Fourier domain where $1 - \abs{ \hat p^{\leak} }^2$ is negative. Importantly, however, \cref{cor:StabilitySquare} also assumes $h$ to have compact support in real-space 
 so that $\cF(h)$ is an entire function and thus cannot vanish in any open set $U \subset \mR^m$ unless $\cF(h)$, and hence $h$, is identically zero.
 
 \HIGHLIGHT{In general, we see that determining a stable class of objects naturally involves the classical problem of finding functions that are well-localized in real-space and Fourier-space at the same time, governed by so-called \emph{uncertainty principles}. See e.g.\ \cite{Slepian1983FourierCompactSpectrum,Folland1997Uncertainty} for reviews on this topic. 
 As a solution,} we will restrict to objects given by B-splines, which may have a compact support \emph{and} will be shown to be \emph{essentially} band-limited in a suitable quantitative manner. 

\vspace{.5em}
\section{Stability estimates for spline objects} \label{S:Splines}

In the following, we derive stability results for objects given by multi-variate B-splines, which can be regarded as  images of \emph{finite resolution}.
Such a restriction also makes sense from an experimentalist's point-of-view as the finite number of detector-pixels introduce a natural discretization in any real-world \HIGHLIGHT{XPCI} setup.

\vspace{.5em}
\subsection{Multi-variate B-splines} \label{SS:SplinesBasicNotions}

As a model for discretized, i.e.\ \emph{pixelated} images, we consider spaces of $j$-th order multi-variate B-splines: for a fixed \emph{resolution} $1/r$ with $r > 0$ and origin $\bo \in [0;1)^{m}$, we arrange nodes on a uniform Cartesian grid in $\mG_{r, \bo} ^m := \left\{ \bo + r \bj  : \bj \in \mZ^m \right\} \subset \mR^m$:
Now we define objects as linear combinations of basis-splines centered at these nodes:
\begin{subequations}   \label{eq:DefSplineSpace}
\begin{align}
 \BSpace &:= \bigg\{h: \bx \mapsto \sum_{\bj \in \mZ^m } b_{\bj} B_{k}^m(\bx/r - \bj - \bo ): (b_{\bj}) \in \ell^2(\mZ^m)  \bigg\}  \label{eq:DefSplineSpace-a} \\
 B_{k}^m(x_1, \ldots, x_m) &:=   \prod_{j = 1}^m  B_{k} (x_j ), \quad  B_{k}  = \begin{cases} B_{0}  \ast B_{k-1}  &\textup{for } k \in \mN  \\
  \boldsymbol 1 _{[-\frac 1 2 ; \frac 1 2 )} &\textup{for } k = 0   \end{cases}  \label{eq:DefSplineSpace-b}
\end{align}
\end{subequations} 
For details and explicit formulas of B-splines, see for example \cite{UnserEtAl1991FastBSplineTransform,UnserEtAl1992_SplinesToGaussConvergence}. For our purposes here it is sufficient to note that $\supp(B_{k}^m) = [-\frac{ k+1} 2; \frac{ k+1} 2]^m$ and $B_k^m \in \sC^{k-1}(\mR^m)$ for $k \geq 1$. 

\vspace{.5em}
 \subsubsection{Approximation properties} \label{SSS:SplinesApprox} Splines interpolate values assigned on the grid nodes: for any sequence  $(y_{\bj}) \in \ell^2(\mZ^m)$ and $k \in \mN_0$, there exists a unique spline $h \in \BSpace$ such that $h(r\bj + \bo) = y_{\bj}$ for all  $\bj \in \mZ^m$ and the map $(y_{\bj}) \mapsto h$ is continuous from $\ell^2(\mZ^m)$ to $L^2(\mR^m)$.
This is related to the fact that B-splines form a \emph{Riesz sequence} \cite{Christensen2010bFramesOfBSplines}:
\begin{align}
 r^{m/2} \CRiesz{k}^m \Norm{(b_{\bj} )}_{\ell^2(\mZ^m)} \leq \Norm{h} \leq r^{m/2}  \Norm{(b_{\bj} )}_{\ell^2(\mZ^m)}
 \label{eq:SplineRieszSeq}
\end{align}
for some constants $\CRiesz{k} > 0$ and all $h =  \sum_{\bj \in \mZ^m } b_{\bj} B_{k}^m(\cdot/r - \bj - \bo ) \in \BSpace$. The Riesz-sequence-property ensures {stability} of the approximation of functions by B-splines.

\vspace{.5em}
 \subsubsection{Separability} \label{SSS:SplinesSeparable} According to their definition in \eqref{eq:DefSplineSpace}, B-splines exhibit a separable structure: for any $h \in \BSpace$, $1 \leq j \leq m$ and $\bx_{<j} \in \mR^{j-1},\bx_{>j} \in \mR^{m-j}$ fixed, it holds that $h_{(\bx_{<j}, \bx_{>j})}: x_j \mapsto h(\bx_{<j}, x_j, \bx_{>j}) \in \BSpacem 1$. In other words, multi-variate B-splines are one-dimensional B-splines along each coordinate dimension.

 \vspace{.5em}
\subsection{Quasi-band-limitation of B-splines} \label{SS:SplinesBandlimit}


Our interest in B-splines is mainly due to their property of being quasi band-limited. As the following estimate of this quasi-band-limitation is slightly off-topic and lengthy to derive, its proof is given in \cref{App:SplineQuasiBandlimit}.

\vspace{.5em}
\begin{theorem}[Quasi-band-limitation of univariate B-splines] \label{thm:SplineQuasiBandlimit}
 Let $k \in \mN_0$, $r > 0$, $\Xi_ r := [-\frac \pi r ; \frac \pi r]$ and $\nu  \geq 1$. Then it holds that 
 \begin{align}
  \Norm{ \cF(h) |_{\compl{(\nu \Xi_ r)}}}  &\leq \CBand(k, \nu)\Norm{ \cF(h) } \MTEXT{for all} h \in \BSpacem 1, \label{eq:thm-SplineQuasiBandlimit-1}
  \end{align}
  where the constant $\CBand(k, \nu) < 1$ is given by
  \begin{align}
  \CBand(k, \nu)^2 &= \frac{c_\band(k,\nu)}{1 + c_\band(k,\nu) }, \quad c_\band(k,\nu) = c_{\band,0}(k,\nu) + \! \sum_{n =  \ceil{(\nu-1)/2}}^\infty \frac 2 { \Parens{ 2n + 1 }^{2(k+1)}} \nonumber \\
  c_{\band,0}(k,\nu) &= \max \bigg\{  \frac{\max\{\tilde \nu, 0\}^{ 2(k+1)}}{ \Parens{ \nu + 2\tilde \nu } ^{ 2(k+1)} }+   \frac{\max\{\tilde \nu, 0\}^{ 2(k+1)}}{   \nu   ^{ 2(k+1)} } - \frac 1 {\bar \nu ^{ 2(k+1)}} , \; 0  \bigg\} \label{eq:thm-SplineQuasiBandlimit-2}
 \end{align}
 where $\ceil{\cdot}$ is the ``round up''-operation, $\bar \nu := 1+ 2 \ceil{(\nu-1)/2}$ and $\tilde \nu := \bar \nu - \nu - 1$.
 
 Conversely, for any $\nu < 1$, there exists an $h \in \BSpacem 1$ such that  $ \cF(h) |_{\compl{(\nu \Xi_ r)}} = \cF(h)$, i.e. no estimate of the form \eqref{eq:thm-SplineQuasiBandlimit-1} can hold true for any constant $\CBand(k, \nu) < 1$.
\end{theorem}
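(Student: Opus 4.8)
The plan is to move to the Fourier side (where a $B$-spline has an almost explicit transform), fold the relevant integrals over $2\pi$-periods to expose a cancellation, and thereby reduce the claimed $L^2$-ratio to the supremum of a one-parameter ratio of lattice tail-sums; the explicit constant \eqref{eq:thm-SplineQuasiBandlimit-2} then comes out of an elementary (if fiddly) estimate of that supremum. First I would record that, for $h = \sum_{j\in\mZ}b_j B_k(\cdot/r - j - \bo) \in \BSpacem 1$,
\[
  \cF(h)(\xi) = c_1\, r\, \widehat{B_k}(r\xi)\, \E^{-\I r\xi\bo}\, \beta(r\xi), \qquad \widehat{B_k}(\theta) = c_2\, \sinc(\theta/2)^{k+1},
\]
where $\beta(\theta) := \sum_{j\in\mZ}b_j\E^{-\I j\theta}$ is the $2\pi$-periodic symbol of the coefficient sequence ($\beta\in L^2([-\pi,\pi))$, and $\beta\not\equiv 0$ whenever $h\not\equiv 0$), $\sinc(x):=\sin(x)/x$, and $c_1,c_2$ are irrelevant constants; this follows from \eqref{eq:DefSplineSpace} and the convolution theorem. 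Note that $\widehat{B_k}$ has no zeros for $\abs\theta < 2\pi$. Substituting $\theta = r\xi$, the ratio to be bounded becomes a scale-free ratio of weighted $L^2(\mR)$-norms of $\beta$ with weight $w(\theta):=\abs{\sinc(\theta/2)}^{2(k+1)}$ (the inequality being trivial for $h\equiv 0$).

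Next I would fold both weighted integrals onto a single period $[-\pi,\pi)$ using periodicity of $\beta$. The algebraic key is $\sin\bigl((\phi+2\pi n)/2\bigr) = (-1)^n\sin(\phi/2)$, whence $w(\phi+2\pi n) = \bigl(2\abs{\sin(\phi/2)}\bigr)^{2(k+1)}\abs{\phi+2\pi n}^{-2(k+1)}$, so that the factor $\abs{\sin(\phi/2)}^{2(k+1)}\abs{\beta(\phi)}^2$ is common to numerator and denominator. With $\phi = \pi u$, $u \in [-1,1)$, this yields
\[
  \frac{\Norm{\cF(h)|_{\compl{(\nu\Xi_ r)}}}^2}{\Norm{\cF(h)}^2} = \frac{\int_{-1}^{1}G(u)\,R(u)\,\D u}{\int_{-1}^{1}G(u)\,\D u} \leq \sup_{u\in[-1,1)}R(u),
\]
where $G \geq 0$, the denominator is positive and finite unless $h\equiv 0$, and $R(u) = S_{\textup{out}}(u)/S(u) \in [0,1)$ with $S_{\textup{out}}(u):=\sum_{n:\,\abs{2n+u}>\nu}\abs{2n+u}^{-2(k+1)}$, $S(u):=\sum_{n\in\mZ}\abs{2n+u}^{-2(k+1)}$ and $S_{\textup{in}}(u):=S(u)-S_{\textup{out}}(u)$. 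Because $\nu\geq 1$, the $n=0$ term always lies in the ``inner'' part $S_{\textup{in}}(u)$, so $R(u)<1$; and since $x\mapsto x/(1+x)$ is increasing, it suffices to show $S_{\textup{out}}(u)/S_{\textup{in}}(u) \leq c_\band(k,\nu)$ for every $u$, which then gives $R(u) \leq c_\band/(1+c_\band) = \CBand(k,\nu)^2 < 1$ and hence \eqref{eq:thm-SplineQuasiBandlimit-1}.

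The heart of the argument — and the step I expect to be the main obstacle — is this last, purely arithmetic estimate. I would bound $S_{\textup{in}}(u)$ from below by keeping only the lattice points $2n+u$ nearest $0$ and, where present, those lying just inside $\pm\nu$ (an extremal $u$ pushes the inner points as far from the origin as the spacing-$2$ grid permits, which is exactly what selects the worst case), and bound $S_{\textup{out}}(u)$ from above by separating the one or two points of the ``boundary shell'' $\abs{2n+u}\in(\nu,\nu+2)$ from the remaining far points, the latter being dominated term-by-term by the series over the odd integers $\geq\bar\nu$, i.e.\ by $\sum_{n\geq\ceil{(\nu-1)/2}}2(2n+1)^{-2(k+1)}$. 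Which placement of $u$ is extremal depends on where $\nu$ sits between consecutive integers, and this dependence is precisely what $\bar\nu=1+2\ceil{(\nu-1)/2}$ and $\tilde\nu=\bar\nu-\nu-1\in[-1,1)$ encode, the cutoff $\max\{\tilde\nu,0\}$ distinguishing whether the boundary shell carries two lattice points or only one. Assembling these bounds yields $c_\band$ in the form \eqref{eq:thm-SplineQuasiBandlimit-2}; the delicate parts are (i) ruling out every competing choice of $u$ — here a monotonicity estimate for $u\mapsto\abs u/\abs{\sin(\pi u/2)}$ on $(0,1)$ is convenient — and (ii) keeping the comparison with the odd-integer series tight enough that $c_\band$ remains finite, so that $\CBand(k,\nu)<1$ strictly.

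For the converse I would, given any $\nu<1$, exhibit a nonzero spline whose transform already vanishes on $\nu\Xi_ r$. Since $\nu\pi<\pi$, put $\beta:=\boldsymbol 1_{\{\nu\pi<\abs\theta<\pi\}}$ extended $2\pi$-periodically, so that $\beta\in L^2([-\pi,\pi))\setminus\{0\}$ vanishes on $[-\nu\pi,\nu\pi]$; let $(b_j)_{j\in\mZ}\in\ell^2\setminus\{0\}$ be its Fourier coefficients and set $h:=\sum_{j\in\mZ}b_j B_k(\cdot/r-j-\bo)\in\BSpacem 1\setminus\{0\}$. As $\widehat{B_k}$ has no zeros for $\abs\theta<2\pi$ while $\beta(r\xi)=0$ for $\abs\xi\leq\nu\pi/r$, the product $\cF(h)$ vanishes a.e.\ on $\nu\Xi_ r$; hence $\cF(h)|_{\compl{(\nu\Xi_ r)}}=\cF(h)$ in $L^2$, so \eqref{eq:thm-SplineQuasiBandlimit-1} cannot hold with any constant $<1$.
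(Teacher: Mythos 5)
Your plan follows essentially the same route as the paper's proof in \cref{App:SplineQuasiBandlimit}: write $\cF(h)$ as a $2\pi$-periodic symbol times $r\cF(B_k)(r\,\cdot)$, fold the outer integral back onto one period, use the $\pi$-periodicity of $\sin^2$ to collapse the weight ratio to $\xi^{2(k+1)}/(\xi\pm 2\pi n)^{2(k+1)}$, decompose $\compl{(\nu\Xi_r)}$ into the shell $(\bar\nu\Xi_r)\setminus(\nu\Xi_r)$ plus far translates of $\Xi_r$, and finish with $\CBand^2=c_\band/(1+c_\band)$; your converse construction is identical to the paper's. The only organizational difference is that you fold numerator \emph{and} denominator and take a single supremum of the lattice-sum ratio $S_{\textup{out}}(u)/S_{\textup{in}}(u)$ over $u$, whereas the paper bounds each translate separately by $\sup_{\xi\in[-\pi,\pi]}w_{k,n}(\xi)$ and sums the resulting constants $c_{k,n}$ — your version is in principle at least as sharp and reduces to the paper's once you lower-bound $S_{\textup{in}}(u)$ by the $n=0$ term alone, which suffices here.

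The one substantive omission is the step you yourself flag as the main obstacle: verifying that the extremal $u$ gives exactly $c_\band(k,\nu)$ of \eqref{eq:thm-SplineQuasiBandlimit-2}. The paper closes this by observing that each $w_{k,n}(\xi)=\xi^{2(k+1)}/(\xi+2\pi n)^{2(k+1)}+\xi^{2(k+1)}/(\xi-2\pi n)^{2(k+1)}$ is even and convex on $[-\pi,\pi]$, so its supremum sits at $\xi=\pm\pi$, yielding $c_{k,n}=(2n-1)^{-2(k+1)}+(2n+1)^{-2(k+1)}$, whose sum over $n\geq 1+\ceil{(\nu-1)/2}$ reindexes into the odd-integer series of the theorem; the boundary shell then requires the three-interval case analysis in the sign of $\tilde\nu$ that produces $c_{\band,0}$. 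Until that convexity/endpoint argument and the $\tilde\nu$ case split are written out, your proof is a correct and complete \emph{strategy} rather than a proof, but nothing in it would fail.
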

 
 \begin{figure}[hbt!]
 \centering
 \includegraphics[width=.55\textwidth]{./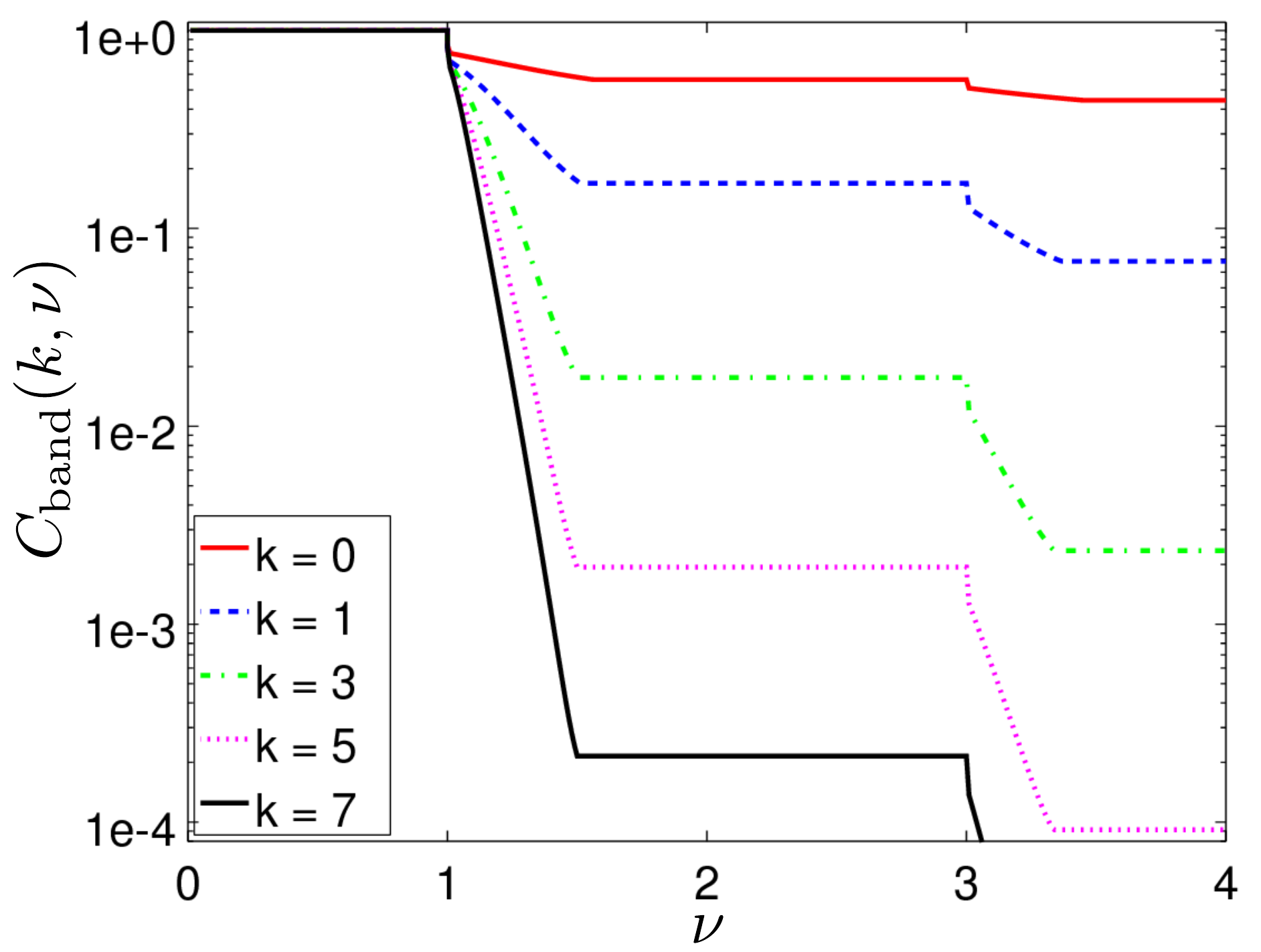} 
 \caption{Semi-logarithmic plot of the quasi-band-limitation constant $\CBand(k, \nu)$ from \cref{thm:SplineQuasiBandlimit} for different spline-orders $k$.}
 \label{fig:CBand}
\end{figure}

 The constant $\CBand(k, \nu)$ in \eqref{eq:thm-SplineQuasiBandlimit-1} may be readily evaluated by computing the infinite series in \eqref{eq:thm-SplineQuasiBandlimit-2} via known analytical formulas. In \cref{fig:CBand}, $\CBand(k, \nu)$ is plotted against $\nu$ for different spline-orders $k = 0,1,3,5,7$. It can be seen that the bound $\CBand(k, \nu)$ drops \emph{discontinuously} from 1 to $\approx 2^{-1/2}$ at $\nu = 1$ and then decreases exponentially until $\nu \approx 1.5$, where the decrease is sharper for higher spline-orders $k$. For $\nu \in [1.5;3]$, the value of  $\CBand(k, \nu)$ stagnates before it continues to decrease within the interval $[3;3.5]$ and so on.
 
 By exploiting the separable structure of B-splines discussed in \secref{SSS:SplinesSeparable}, the 1D-result in \cref{thm:SplineQuasiBandlimit} may be easily generalized to higher dimensions:
\vspace{.5em}
\begin{theorem}[Quasi-band-limitation of multivariate B-splines] \label{thm:SplineQuasiBandlimitND}
 Let $k \in \mN_0$, $r > 0$, $\nu  \geq 1$, $\Xi_ r := [-\frac \pi r ; \frac \pi r]^m$ and $\Xi_{r,j} := \mR^{j-1} \times [-\frac \pi r ; \frac \pi r] \times \mR^{m-j}$. Then it holds that 
 \begin{align}
  \bnorm{ \cF(h) |_{\compl{(\nu \Xi_{r,j})}}}  &\leq \CBand  (k, \nu)\Norm{ \cF(h) } \qquad \qquad \qquad \quad \MTEXT{for all} 1 \leq j \leq m, \label{eq:thm-SplineQuasiBandlimitND-1} \\
  \bnorm{ \cF(h) |_{\compl{(\nu \Xi_ r)}}}  &\leq \Parens{ 1 - \Parens{1 - \CBand(k, \nu)^2 }^m }^{\frac 1 2} \Norm{ \cF(h) } \MTEXT{for all} h \in \BSpace \label{eq:thm-SplineQuasiBandlimitND-2}
  \end{align}
\end{theorem}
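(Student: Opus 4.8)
The plan is to reduce both estimates to the one-dimensional result of \cref{thm:SplineQuasiBandlimit} by exploiting the separable structure of $\BSpace$ recalled in \secref{SSS:SplinesSeparable}, and then to combine the one-dimensional bounds across the $m$ coordinate directions by an iterative argument involving commuting Fourier-space projections.

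First I would prove the single-direction estimate \eqref{eq:thm-SplineQuasiBandlimitND-1}. Fix $j$ and factor $\cF = \cF_{\neq j}\circ \cF_j$, where $\cF_j$ is the one-dimensional Fourier transform in the $j$-th coordinate and $\cF_{\neq j}$ the (unitary) Fourier transform in the remaining $m-1$ coordinates. Since multiplication by $\boldsymbol 1_{\compl{(\nu\Xi_{r,j})}} = \boldsymbol 1_{\{\abs{\xi_j}>\nu\pi/r\}}$ depends only on $\xi_j$ and hence commutes with $\cF_{\neq j}$, unitarity of $\cF_{\neq j}$ gives $\Norm{\cF(h)|_{\compl{(\nu\Xi_{r,j})}}} = \bnorm{\boldsymbol 1_{\{\abs{\xi_j}>\nu\pi/r\}}\cdot \cF_j(h)}$. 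By Fubini, for almost every choice of the remaining coordinates the slice $x_j\mapsto h(\dots,x_j,\dots)$ lies in $\BSpacem 1$ (this is the separability statement of \secref{SSS:SplinesSeparable}; the $\ell^2$-property of its coefficients follows from the Riesz lower bound \eqref{eq:SplineRieszSeq}), and $\cF_j(h)$ restricted to that slice is precisely its one-dimensional Fourier transform. Applying \cref{thm:SplineQuasiBandlimit} to each slice and integrating over the remaining coordinates yields $\bnorm{\boldsymbol 1_{\{\abs{\xi_j}>\nu\pi/r\}}\cdot\cF_j(h)}^2 \le \CBand(k,\nu)^2\Norm{\cF_j(h)}^2 = \CBand(k,\nu)^2\Norm{\cF(h)}^2$, which is \eqref{eq:thm-SplineQuasiBandlimitND-1}.

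For \eqref{eq:thm-SplineQuasiBandlimitND-2}, introduce on $L^2(\mR^m)$ the Fourier multipliers $R_j^{\leq} := \cF^{-1}\bparens{\boldsymbol 1_{\{\abs{\xi_j}\le\nu\pi/r\}}\cdot\cF(\cdot)}$ and $R_j^{>} := I - R_j^{\leq}$; these are commuting orthogonal projections, and $\prod_{j=1}^m R_j^{\leq}$ is exactly the Fourier-restriction to $\nu\Xi_r$, so the Pythagorean identity gives $\Norm{\cF(h)|_{\compl{(\nu\Xi_r)}}}^2 = \Norm{h}^2 - \bnorm{\bparens{\prod_{j=1}^m R_j^{\leq}}h}^2$. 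Now set $f_1 := h$ and $f_{j+1} := R_j^{\leq} f_j$, so $f_{j+1} = \bparens{\prod_{i\le j}R_i^{\leq}}h$. The crucial observation is that $R_i^{\leq}$ and $R_i^{>}$ act only on the $i$-th coordinate, so $f_j$ still has the property that almost all of its slices in the $j$-th coordinate lie in $\BSpacem 1$ (again invoking \eqref{eq:SplineRieszSeq} for the $\ell^2$-coefficients); hence the argument of the previous paragraph applies to $f_j$ in direction $j$ and gives $\Norm{R_j^{>}f_j}\le \CBand(k,\nu)\Norm{f_j}$. Consequently $\Norm{f_{j+1}}^2 = \Norm{f_j}^2 - \Norm{R_j^{>}f_j}^2 \ge \bparens{1-\CBand(k,\nu)^2}\Norm{f_j}^2$, and iterating over $j = 1,\dots,m$ yields $\bnorm{\bparens{\prod_{j=1}^m R_j^{\leq}}h}^2 \ge \bparens{1-\CBand(k,\nu)^2}^m\Norm{h}^2$. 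Inserting this into the Pythagorean identity and using $\Norm{h}=\Norm{\cF(h)}$ gives \eqref{eq:thm-SplineQuasiBandlimitND-2}.

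I expect the only genuine subtlety to be the measure-theoretic bookkeeping in the reduction step: justifying that almost every one-dimensional slice of $h$ — and, more delicately, of the filtered functions $f_j$, which are no longer splines in the other coordinates — is a bona fide element of $\BSpacem 1$, and that the partial-Fourier-transform and Fubini manipulations are legitimate in the $L^2$-sense. Everything else is elementary once the projections $R_j^{\leq}$ are set up; in particular, no explicit estimate of the constant $\CBand$ is needed, since it is simply inherited from \cref{thm:SplineQuasiBandlimit}.
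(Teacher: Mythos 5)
Your proof is correct, and it is precisely the separability argument the paper alludes to but omits (the theorem is stated with only the remark that it follows ``by exploiting the separable structure'' of \secref{SSS:SplinesSeparable}); in particular, your iterated-projection step is the natural way to obtain the stated constant $\bparens{1-(1-\CBand(k,\nu)^2)^m}^{1/2}$ rather than the weaker $m^{1/2}\CBand(k,\nu)$ a naive union bound over the $\compl{(\nu\Xi_{r,j})}$ would give. The slice-regularity subtlety you flag for the filtered functions $f_j$ is real but resolves exactly as you indicate, since $R_i^{\leq}$ for $i\neq j$ preserves the class of functions of the form $\sum_n g_n(\bx_{\neq j})\,B_k(x_j/r-n-o_j)$ with $\sum_n\norm{g_n}^2<\infty$, to which the one-dimensional argument of \cref{thm:SplineQuasiBandlimit} applies slice-wise.
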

\vspace{.5em}
\vspace{.5em}

 \vspace{.5em}
 \subsection{Stability estimates} \label{SS:SplinesStability}
 \HIGHLIGHT{In the language of regularization theory, the transition to finitely sampled B-spline objects corresponds to imposing a (very strong)  \emph{source condition}. Similarly as proven in \cite{AlessandriniEtAl2005LipschitzStabHelmholtzScat,BerettaEtAl2016LipschitzStabHelmholtzScat} for other severely ill-posed problems, such a ``finite-resolution'' source condition enables Lipschitz-stability estimates for image-reconstruction from truncated Fresnel-data. This is seen by combining the quasi-band-limitation results from \secref{SS:SplinesBandlimit} with the leakage estimates from \secref{SS:BoundsComplex-ExplicitFilters}:} 

\vspace{.5em}
\begin{theorem}[Stability estimate for spline-objects] \label{thm:StabSpline}
Let $ K = \SquareDom$ and $\Omega = \SquareDomMin \Delta$ for $0 < \Delta < \frac 1 2$. Let $\tNFa {\Delta} := \Delta^2\tNF$ and $\tNFa r := r^2\tNF$ denote the Fresnel numbers associated with the length-scales $\Delta$ and $r$, respectively (compare \secref{SSS:MathFresnelNumber}). Furthermore, let $\nu \geq 1$ and $\Xi := [-\nu \pi/\tNFa r^{1/2}; \nu \pi/\tNFa r^{1/2}]$. Then it holds that 
 \begin{align}
  \Norm{ \cD(h) | _{K} }  &\geq \CStab (\tNFa {\Delta}, \tNFa r, k, \nu )^m \norm{h } \FA h \in \BSpace \cap L^2(\Omega). \label{eq:thm-StabSpline-1}
 \end{align}
 With $\eFd$ as defined in \cref{thm:LeakageSquareSimple}, the constant is given by 
  \begin{align}
    \CStab (\tNFa {\Delta}, \tNFa r, k, \nu )^2  &= 1 -  C_{\low}^2 - \CBand  (k, \nu )^2 \Parens{ C_{\tot}^2 - C_{\low}^2  }  \label{eq:thm-StabSpline-2}  \\
    C_{\low}  &:= \max_{x \in \Xi} \eFd(x), \qquad C_{\tot} := \max_{x \in \mR}   \eFd(x)  \nonumber
  \end{align}
  \end{theorem}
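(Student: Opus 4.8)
The plan is to reduce the $m$-dimensional estimate to $m$ successive applications of a one-dimensional version, exploiting that $K$, $\Omega$, the leakage-set $P_\leak=[-\Delta;\Delta]^m$ and the Fresnel propagator all factorize along the coordinate axes. The one-dimensional building block is: if $u\in\BSpacem 1$ satisfies $\supp(u)\subset[-\tfrac12+\Delta;\tfrac12-\Delta]$, then $\norm{\cD^{\OneD}(u)|_{[-1/2;1/2]}}\geq\CStab\norm u$, where $\CStab=\CStab(\tNFa\Delta,\tNFa r,k,\nu)$ and $\cD^{\OneD}$ is the $1$D Fresnel propagator. To prove it, I would combine the $m=1$ case of \cref{cor:StabilitySquare} — equivalently \cref{thm:LeakageSquareSimple} — which gives $\norm{\cD^{\OneD}(u)|_{\compl{[-1/2;1/2]}}}^2\leq\int_{\mR}\abs{\eFd(\tNF^{-1/2}\xi)}^2\abs{\cF(u)(\xi)}^2\,\D\xi$, with the pointwise bound $\abs{\eFd(\tNF^{-1/2}\xi)}^2\leq C_{\low}^2+(C_{\tot}^2-C_{\low}^2)\boldsymbol 1_{\{\abs\xi>\nu\pi/r\}}$ (which holds because $\tNF^{-1/2}\xi\in\Xi\Leftrightarrow\abs\xi\leq\nu\pi/r$ and $C_{\tot}\geq C_{\low}$), and the quasi-band-limitation $\norm{\cF(u)|_{\compl{[-\nu\pi/r;\nu\pi/r]}}}\leq\CBand(k,\nu)\norm{\cF(u)}$ from \cref{thm:SplineQuasiBandlimit}. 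This yields $\norm{\cD^{\OneD}(u)|_{\compl{[-1/2;1/2]}}}^2\leq\bparens{C_{\low}^2+(C_{\tot}^2-C_{\low}^2)\CBand(k,\nu)^2}\norm u^2=(1-\CStab^2)\norm u^2$, whence the claim follows since $\norm{\cD^{\OneD}(u)}=\norm u$ by \eqref{eq:FresnelPropUnitary}.

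For the $m$-dimensional statement I would then peel off one coordinate at a time. Let $\cD_j$ denote Fresnel propagation acting on the $j$-th coordinate only (as in \eqref{eq:FresnelPropSeparable}) and $R_j$ the multiplication by $\boldsymbol 1_{\{\abs{x_j}\leq 1/2\}}$. Because operators with different coordinate indices commute and $\cD=\cD_1\cdots\cD_m$, one has $\cD(h)|_K=(R_1\cD_1)(R_2\cD_2)\cdots(R_m\cD_m)h$. Setting $u_m:=h$ and $u_{j-1}:=(R_j\cD_j)u_j$, so that $u_0=\cD(h)|_K$, the decisive observation is that $u_j=(R_{j+1}\cD_{j+1})\cdots(R_m\cD_m)h$ is built from operators acting only on coordinates $>j$, so the dependence of $u_j$ on the $j$-th variable coincides with that of $h$. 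Hence, for $h\in\BSpace\cap L^2(\Omega)$, the function $u_j$ is (for a.e.\ fixed values of the other coordinates) a univariate $\BSpacem 1$-spline in $x_j$ by the separability of $\BSpace$ (\secref{SSS:SplinesSeparable}), and it is supported in $[-\tfrac12+\Delta;\tfrac12-\Delta]$ in $x_j$ since $\Omega=\SquareDomMin\Delta$ projects onto this interval along every axis. The one-dimensional building block, applied in the $j$-th coordinate — i.e.\ tensored with the identity on the remaining coordinates, which is legitimate by Fubini since \cref{cor:StabilitySquare,thm:SplineQuasiBandlimit} are $L^2$-estimates — therefore gives $\norm{u_{j-1}}=\norm{(R_j\cD_j)u_j}\geq\CStab\norm{u_j}$. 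Chaining these inequalities for $j=m,m-1,\dots,1$ yields $\norm{\cD(h)|_K}=\norm{u_0}\geq\CStab^m\norm{u_m}=\CStab^m\norm h$, which is the assertion.

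The part I expect to require the most care is the second step: justifying the reordering $\cD(h)|_K=\prod_{j=1}^m(R_j\cD_j)h$, and — more importantly — verifying that after peeling off the operators on coordinates $j+1,\dots,m$ the function $u_j$ still lies in the spline space and satisfies the support constraint \emph{in the coordinate currently being processed}, which is exactly what keeps the one-dimensional estimate applicable at every stage and is why the separable structure of both $\BSpace$ and $\cD$ is essential. A cruder route — bounding $\norm{\cD(h)|_{\compl K}}^2$ by the sum of the $2m$ half-space leakages, as in the proof of \cref{thm:LeakageSquareSimple} — only delivers the weaker, and for large $m$ possibly vacuous, estimate $\norm{\cD(h)|_K}^2\geq\parens{1-m(1-\CStab^2)}\norm h^2$; the multiplicative constant $\CStab^m$ genuinely requires the coordinatewise argument. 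Equivalently, one may phrase the peeling abstractly: $\BSpace\cap L^2(\Omega)$ is the Hilbert tensor product of $m$ copies of $\BSpacem 1\cap L^2([-\tfrac12+\Delta;\tfrac12-\Delta])$, $\cD(\cdot)|_K=A^{\otimes m}$ with $A:=\cD^{\OneD}(\cdot)|_{[-1/2;1/2]}$, and $A^\ast A\geq\CStab^2\,\mathrm{Id}$ on the one-dimensional space implies $(A^\ast A)^{\otimes m}\geq\CStab^{2m}\,\mathrm{Id}$.
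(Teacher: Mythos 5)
Your proposal is correct and follows essentially the same route as the paper: the one-dimensional case is handled by combining the leakage filter of \cref{thm:LeakageSquareSimple} (split into its values on $\nu\Xi_r$ and its complement, bounded by $C_{\low}$ and $C_{\tot}$) with the quasi-band-limitation of \cref{thm:SplineQuasiBandlimit}, and the $m$-dimensional case is obtained by peeling off one coordinate at a time using the commutation of $\cD_j$ with restrictions in the other coordinates and the fact that the intermediate functions remain univariate splines with the correct support in the coordinate being processed. The only differences are cosmetic (a pointwise majorant instead of the paper's inner-product splitting, the reverse order of peeling, and the tensor-product reformulation), and your remark that the naive union bound over half-spaces would only yield the additive constant $1-m(1-\CStab^2)$ correctly identifies why the coordinatewise argument is needed.
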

\vspace{.5em}
 \begin{proof}
  We first prove the claim for $m = 1$, i.e.\ let $h \in \BSpacem 1 \cap L^2(\Omega)$ for $K = [-\frac 1 2; \frac 1 2]$ and $\Omega = [-\frac 1 2 + \Delta; \frac 1 2- \Delta]$.
  By \cref{thm:LeakageSquareSimple}, it then holds that
  \begin{align}
   \Norm{  \cD \Parens{ h }|_{\compl K} }^2 &\leq  \bip{\cF(h)}{ |\hat p^{\leak}_{\tNF, \tNFa {\Delta}}|^2  \cdot \cF(h)}, \qquad \hat p^{\leak}_{\tNF, \tNFa {\Delta}}(\xi) = \eFd (\xi/\tNF^{\frac 1 2} ), \quad \xi \in \mR\label{eq:thm-StabSpline-pf1}
\end{align}
 Let $\Xi_r := (\tNF^{\frac 1 2} / \nu) \cdot \Xi = [-\pi/r; \pi/r]$. Then we have by definition of the constants $C_\low, C_\tot$
 \begin{align}
  \max_{\xi \in (\nu\Xi_r)} | \hat p^{\leak}_{\tNF, \tNFa {\Delta}}(\xi) | &= \max_{\xi \in (\nu\Xi_r)} \eFd (\xi/\tNF^{\frac 1 2} ) = \max_{x \in \Xi} \eFd (x ) = C_{\low}  \label{eq:thm-StabSpline-pf2} \\
  \max_{\xi \in \mR} | \hat p^{\leak}_{\tNF, \tNFa {\Delta}}(\xi) | &= \max_{x \in \mR} \eFd (x ) = C_{\tot}  \label{eq:thm-StabSpline-pf3}  
 \end{align}
  By combining these bounds with the estimate \eqref{eq:thm-StabSpline-pf1}, we obtain
  \begin{align}
   \bnorm{\cD(h)|_{\compl K}}^2 &- C_{\low}^2 \norm{\cF(h)}^2 \leq \bip{\cF(h)}{ \underbrace{ \bparens{  | p^{\leak}_{\tNF, \tNFa {\Delta}} |^2 -  C_{\low}^2} }_{ \leq 0 \textup{ in } \nu \Xi_r } \cdot \cF(h)} \nnl
   &\leq  \bip{\cF(h)|_{\compl{(\nu\Xi_r)}}}{  \bparens{ | p^{\leak}_{\tNF, \tNFa {\Delta}} | ^2  -  C_{\low}^2}   \cdot \cF(h)|_{\compl{(\nu\Xi_r)}}} \leq   \Parens{ C_{\tot}^2 - C_{\low}^2 } \bnorm{  \cF(h)|_{\compl{(\nu\Xi_r)}}}^2 \label{eq:lem-StabSplineStripe-pf1}
  \end{align}
 Since $h \in \BSpacem 1$ and $\nu \geq 1$, $\bnorm{  \cF(h)|_{\compl{(\nu\Xi_r)}}}$ can be bounded via the quasi-band-limitation \cref{thm:SplineQuasiBandlimit}: $\bnorm{  \cF(h)|_{\compl{(\nu\Xi_r)}}} \leq \CBand (k, \nu) \norm{  \cF(h) }$.
  Inserting this bound into \eqref{eq:lem-StabSplineStripe-pf1} yields 
  \begin{align}
     \Norm{\cD(h)|_{  K}}^2 &= \Norm{\cD(h)}^2 - \bnorm{\cD(h)|_{\compl K}}^2 = \Norm{\cF(h)}^2 - \bnorm{\cD(h)|_{\compl K}}^2 \nnl 
     &= \Norm{\cF(h)}^2  - C_{\low}^2 \norm{\cF(h)}^2 - \bparens{ \bnorm{\cD(h)|_{\compl K}}^2 - C_{\low}^2 \norm{\cF(h)}^2 }  \nnl
     &\geq  \Parens{ 1 - C_{\low}^2 - \CBand (k, \nu)^2 \bparens{ C_{\tot}^2 - C_{\low}^2  } } \Norm{\cF(h)}^2 =  \CStab(\tNFa {\Delta} , \tNFa r, k, \nu)^2  \norm{h }^2. \label{eq:lem-StabSplineStripe-pf3}
  \end{align}

 \vspace{.5em}
\paragraph{Extension to $m > 1$:} The result may be generalized to higher dimensions by exploiting the separability of the Fresnel-propagator \eqref{eq:FresnelPropSeparable}, of multi-variate B-splines and of the domains $\Omega$ and $K$: if we set $K_j := \mR^{j-1} \times [-\frac 1 2; \frac 1 2] \times \mR^{m-j}$ and $\Omega_j :=  \mR^{j-1} \times [-\frac 1 2 + \Delta; \frac 1 2 - \Delta] \times \mR^{m-j}$ and factorize the Fresnel propagator $\cD = \cD_m \ldots \cD_1$, then we have $K = \bigcap_{j=1}^m K_j$, $\Omega = \bigcap_{j=1}^m \Omega_j$ and the restriction to $K_j$ commutes with $\cD_i$ for any $i\neq j$. Thus, 
\begin{align}
 \cD(h) |_K &= \bparens{  \ldots \bparens{ \cD_m \ldots \cD_1(h)|_{K_1}}|_{K_2} \ldots }|_{K_m} = \cD_m\bparens{ \ldots \cD_2\bparens{ \cD_1(h)|_{K_1} }|_{K_2} \ldots }|_{K_m} \nnl
 &=   \cD_m\bparens{ h_{m} }|_{K_m} \MTEXT{with} h_j := \cD_{j-1}\bparens{ \ldots \cD_2\bparens{ \cD_1(h)|_{K_1} }|_{K_2} \ldots }|_{K_{j-1}}.
\end{align}
 Moreover, as the operators $T_j: f \mapsto \cD_{j}(f)|_{K_j}$ act only along the $j$-th coordinate dimension and since $h \in \BSpace$ with $\supp(h) \subset \Omega$, it holds that $\supp(h_j) \subset \Omega_j$ for all $1 \leq j \leq m$ and $h_j$ is a 1D B-spline when restricted to the $j$-th coordinate dimension (compare \secref{SSS:SplinesSeparable}). This implies that we may bound expressions of the form $\cD_j\bparens{ h_{j} }_{K_j}$ using the bound for $m = 1$ dimensions derived above. By recursive application of this argument, we arrive at
 \begin{align}
  \Norm{ \cD(h) |_K } &= \bnorm{ \cD_m\bparens{ h_{m} }|_{K_m}  }  \geq  \CStab (\tNFa {\Delta}, \tNFa r, k, \nu ) \Norm{ h_m  } \nnl
  &= \CStab (\tNFa {\Delta}, \tNFa r, k, \nu ) \bnorm{ \cD_{m-1}\bparens{ h_{m-1} }|_{K_{m-1}}  } \geq \!\ldots \! \geq \CStab (\tNFa {\Delta}, \tNFa r, k, \nu )^m    \Norm{ h }.  
 \end{align}
 \end{proof}

\vspace{.5em}
\subsection{Application: resolution estimates} \label{SS:SplinesResolutionEstimates}

The stability estimate in \cref{thm:StabSpline} can be used to verify that an imaging setup allows for a certain resolution at a realistic noise level within the setting of inverse problem \cref{IP:Complex}(a).
We can address to types of questions:
\vspace{.5em}
\begin{itemize}
 \item[$\boldsymbol 1$] For a fixed (spline-)resolution $1/r$, how stable is the reconstruction within a square object-domain $\Omega$ depending on its distance $\Delta$ to the detector boundary $\partial K$? \vspace{.25em}
 \item[$\boldsymbol 2$] If we require a stability estimate $\norm{\cD(h)|_K} \geq C \norm{h}$ with some minimal contrast $C \in (0;1)$, what resolution $1/r$ can be achieved depending on $d$?
\end{itemize}
\vspace{.5em}
We illustrate this for an exemplary setting in $m = 2$ dimensions with square detector $K = [-\frac 1 2; \frac1 2]^2$, Fresnel number $\tNF =  10^4$ and splines of order $k = 7$. 

For setting $\boldsymbol 1$, let us examine how stably features of size $r = 1/500$ 
can be reconstructed. We compute values of the stability-constant $\CStab(\tNFa {\Delta}, \tNFa r, k, \nu)$ for different $0 < \Delta < \frac 1 2$, $\tNFa {\Delta} = \Delta^2\tNF$ and a suitable $\nu$ (here, we choose $\nu = 1.2$ fixed but note that, in principle, one could optimize over this parameter as the bound \eqref{eq:thm-StabSpline-1} holds for \emph{all} $\nu \geq 1$). For any point $\bx \in K$, we can then express the \emph{local} stability of the reconstruction at a point $\bx$ as
\begin{align}
 c_{\stab, r}(\bx) &:= \sup \left\{C_\stab  ( \tNFa {\Delta}, r ^2 \tNF , k, \nu) : \bx \in [-\sfrac 1 2 + \Delta; \sfrac 1 2- \Delta]^m \right\} \nnl
 &= \CStab \bparens{ \dist(\bx, \partial K)^2 \tNF, r^2 \tNF , k, \nu } \qquad (\partial K\textup{: detector-boundary}) \label{eq:CStabLocal}
\end{align}
The resulting values of $c_{\stab, r}(\bx)$ are plotted in \cref{fig:StabilityResolutionComplex}(a). It can be seen that $c_{\stab, r}(\bx) = 0$, indicating instability, holds true up to  $\dist(\bx, \partial K) \gtrsim \pi / (\tNF r)$ and then increases very quickly to a value close $1$ for larger distances to the detector-boundary.  These results are in very good agreement with the resolution estimates from the analysis of Gaussian wave-packets in \secref{S:GaussObservations}.
 
 \begin{figure}[hbt!]
 \centering
 \includegraphics[width=\textwidth]{./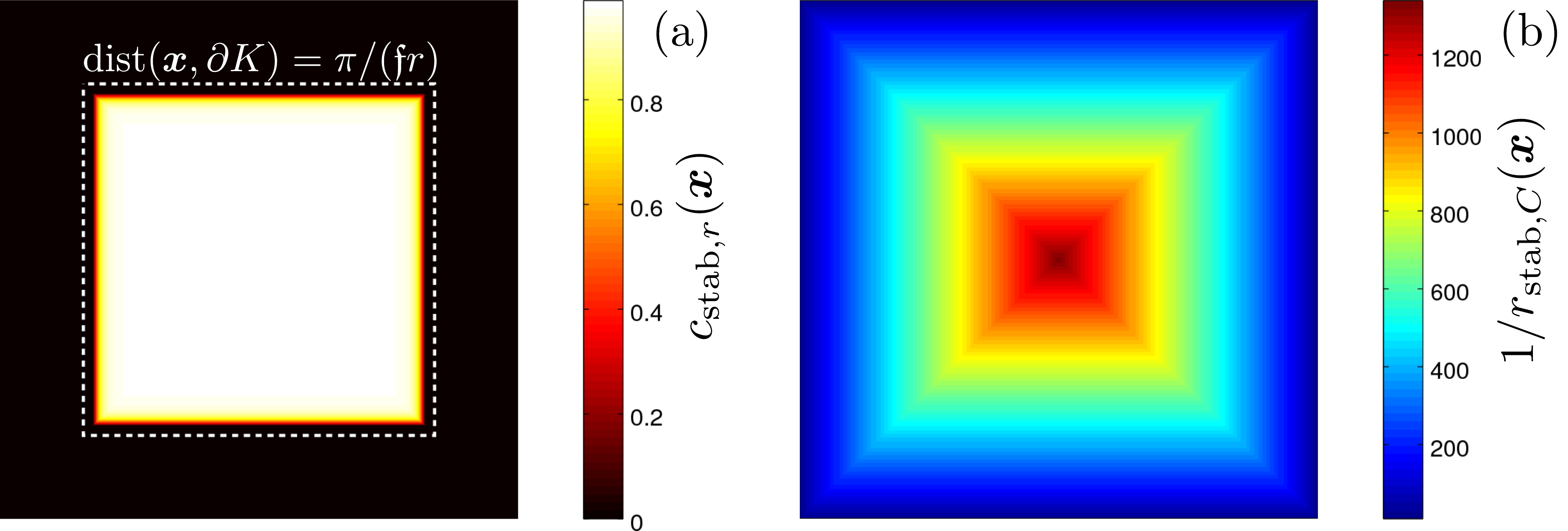} 
 \caption{(a) Local stability of the reconstruction in \cref{IP:Complex}(a) for a square FoV $K = [-\frac 1 2; \frac 1 2]^2$ and $\tNF = 10^4$, according to \eqref{eq:CStabLocal} and \cref{thm:StabSpline} for $k = 7$, $\nu = 1.2$. The dashed line bounds the region that is expected to be stable according to the wave-packet-analysis in \secref{SSS:GaussianResoEstimatesComplex}. (b) Plot of the stably achievable local resolution computed via \eqref{eq:RStabLocal} for $C = 1/4$.}
 \label{fig:StabilityResolutionComplex}
\end{figure}

For problems of the type $\boldsymbol 2$, we can use \eqref{eq:CStabLocal} 
to express the stably achievable resolution:
\begin{align}
 r_{\stab, C}(\bx) := \inf \left\{r > 0 : c_{\stab, r}(\bx) \geq C \right\} \label{eq:RStabLocal}
\end{align}
Numerically computed values of $1/r_{\stab, C}(\bx)$ for $C = 1/4$ are plotted in \cref{fig:StabilityResolutionComplex}(b). The plot turns out to be practically identical to \ref{fig:WavepacketResolution}(a), up to slightly lower resolutions by a global factor of about $1.2$. In other words, the \emph{worst-case} resolution estimates of the present section are very close to the possibly optimistic bounds derived in \secref{SSS:GaussianResoEstimatesComplex}.

\vspace{.5em}
\section{Improved estimates for real-valued objects} \label{S:Real}

\vspace{.5em}
\subsection{Quasi-symmetric propagation principle} \label{SS:RealSymmProp}

In the preceding sections, we have derived locality- and stability estimates for Fresnel-propagation in terms of essentially two ingredients: smoothness, i.e.\ a finite resolution, and distance to the detector-boundary. Moreover, as both best-case- and worst-case-stability has been considered, these ingredients have been shown to be both \emph{necessary and sufficient}!
In \secref{SSS:GaussianResoEstimatesReal}, however, it has been found that the reconstruction of  \emph{real-valued} images  is subject to much less severe resolution limits, based on the observation that real-valued Gaussian wave-packets propagate \emph{symmetrically} upon Fresnel propagation.

Clearly, the observed behavior of wave-packets could  be just a peculiarity of the considered, very special class of functions. Yet, \emph{quasi}-symmetric propagation of real-valued signals turns out to be a general principle, that is closely related to the characteristic symmetry-properties of their Fourier transforms: for any $\varphi: \mR^m \to \mR$, $\cF(\varphi)$ is a \emph{Hermitian} function, i.e.\ it holds that $\cF(\varphi)(-\bxi) = \cc{\cF(\varphi)(\bxi)}$ for all $\bxi \in \mR^m$. We use this property via the following lemma:

\vspace{.5em}
 \begin{lemma} \label{lem:FiltersRealSym}
 Let $\varphi \in L^2(\mR^m,\mR)$ be real-valued and $\hat p \in L^\infty( \mR^m )$. Then it holds that
 \begin{align}
  \Norm{ \hat p \cdot \cF(\varphi)} = \Norm{ \sym\!\Parens{ \hat p } \cdot \cF(\varphi)},
 \end{align}
 where $\sym \Parens{ \hat p }$ is   defined by $\sym \parens{ \hat p }(\bxi) := 2^{- 1/2 } \parens{ | \hat p (\bxi) |^2 + |  \hat p (-\bxi) |^2  } ^{1/2}$ for all $\bxi \in \mR^m$.
\end{lemma}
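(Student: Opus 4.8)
The plan is to reduce the identity to the single fact that $\cF(\varphi)$ has even modulus whenever $\varphi$ is real-valued. First I would abbreviate $u := \cF(\varphi) \in L^2(\mR^m)$ and recall, as noted just before the lemma, that $u$ is \emph{Hermitian}: $u(-\bxi) = \cc{u(\bxi)}$ for almost every $\bxi \in \mR^m$ (the standard conjugation rule for the Fourier transform of a real function, which passes from Schwartz functions to $L^2$ by density). In particular $|u(-\bxi)| = |u(\bxi)|$ almost everywhere. Since $\hat p \in L^\infty(\mR^m)$ and $u \in L^2(\mR^m)$, the functions $|\hat p|^2 |u|^2$ and $|\hat p(-\cdot)|^2 |u|^2$ are integrable, so all integrals below are finite and may be manipulated freely.

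Next I would write both sides as weighted $L^2$-norms of $u$ and split the right-hand one:
\begin{align}
 \Norm{ \sym(\hat p) \cdot \cF(\varphi)}^2 &= \int_{\mR^m} \sfrac{1}{2}\Parens{ |\hat p(\bxi)|^2 + |\hat p(-\bxi)|^2 }\, |u(\bxi)|^2 \, \D \bxi \nnl
 &= \sfrac{1}{2}\int_{\mR^m} |\hat p(\bxi)|^2 |u(\bxi)|^2 \, \D \bxi + \sfrac{1}{2}\int_{\mR^m} |\hat p(-\bxi)|^2 |u(\bxi)|^2 \, \D \bxi.
\end{align}
In the last integral I would substitute $\bxi \mapsto -\bxi$ (Lebesgue measure is reflection-invariant), obtaining $\sfrac{1}{2}\int_{\mR^m} |\hat p(\bxi)|^2 |u(-\bxi)|^2 \, \D \bxi$, and then invoke $|u(-\bxi)| = |u(\bxi)|$ to identify this with the first integral on the right-hand side. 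Adding the two halves yields $\int_{\mR^m} |\hat p(\bxi)|^2 |u(\bxi)|^2 \, \D \bxi = \Norm{\hat p \cdot \cF(\varphi)}^2$, and taking square roots concludes the proof.

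I do not anticipate a genuine obstacle: morally the statement just says that a multiplier acting on a Hermitian signal ``sees'' only its symmetrization $\bxi \mapsto (|\hat p(\bxi)|^2 + |\hat p(-\bxi)|^2)/2$. The only two points that deserve a sentence are the precise almost-everywhere sense of the Hermitian property of $\cF(\varphi)$ and the integrability needed to split the integral and change variables, both of which are immediate from $\hat p \in L^\infty$ and $u \in L^2$. In the sequel this lemma will presumably be applied with $\hat p$ one of the explicit leakage filters $\hat p^{\leak}$ appearing in \cref{thm:PrincipalLeakage}, \cref{cor:LeakageSquare} and \cref{thm:LeakageSquareSimple}, allowing one to replace those filters by their symmetrized (and, as \cref{fig:WavepacketPropSequenceReal} suggests, better-localized) versions in the real-valued setting of \cref{IP:Real}.
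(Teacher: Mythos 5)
Your proof is correct and is essentially the paper's own argument run in the opposite direction: both rest on the Hermitian property $|\cF(\varphi)(-\bxi)|=|\cF(\varphi)(\bxi)|$ combined with the reflection change of variables $\bxi\mapsto-\bxi$ to identify the two weighted integrals. The extra remarks on integrability and the almost-everywhere sense of the symmetry are fine but inessential.
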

\vspace{.5em}
\begin{proof}
 As $\varphi$ is real-valued, $|\cF(\varphi)(-\bxi)| = \babs{ \cc{\cF(\varphi)(\bxi)} } = \abs{ \cF(\varphi)(\bxi) } $ for all $\bxi \in \mR^m$. Thus,
\begin{align}
  2 \norm{ \hat p \cdot \cF \parens{ \varphi }  }^2 &= 2 \int_{\mR^m} |\hat p(\bxi)|^2 | \cF(\varphi)(\bxi)|^2  \, \D \bxi =  \int_{\mR^m} |\hat p(\bxi)|^2 \Parens{ | \cF(\varphi)(\bxi)|^2 + | \cF(\varphi)(-\bxi)|^2 }  \, \D \bxi \nnl
  &=  \int_{\mR^m} |\hat p(\bxi)|^2 | \cF(\varphi)(\bxi)|^2 \, \D \bxi  +  { \int_{\mR^m} |\hat p(\bxi)|^2 | \cF(\varphi)(-\bxi)|^2  \, \D \bxi } \nnl 
  &=  \int_{\mR^m} \bparens{ |\hat p(\bxi)|^2 + |\hat p(-\bxi)|^2 }  | \cF(\varphi)(\bxi)|^2 \, \D \bxi = 2 \int_{\mR^m}  |\sym (\hat p)(\bxi)|^2   | \cF(\varphi)(\bxi)|^2 \, \D \bxi \nnl
  &= 2 \norm{ \sym(\hat p) \cdot \cF \parens{ \varphi }  }^2.  \label{eq:lem-FiltersRealSym-pf1}  
\end{align}
\end{proof}
\vspace{.5em}

Despite its simplicity, \cref{lem:FiltersRealSym} enables us to prove a surprisingly general result on the propagation of real-valued signals:

\vspace{.5em}
 \begin{theorem}[Quasi-symmetric propagation of real-valued signals] \label{thm:SymmetryBound}
For $a \in \mR$ and $\bn \in \mS^{m-1}$, let $K := \{ \bx \in \mR^m: \bn \cdot \bx \geq a \} \subset \mR^m$ be a half-space. Then it holds that
\begin{align}
 \norm{\cD(\varphi)|_{\compl{K}}} \leq \Csym  \norm{\varphi} \MTEXT{for all \emph{real-valued}} \varphi \in L^2(K, \mR) \label{eq:thm-SymmetryBound-1}
\end{align}
with a universal constant $\Csym < 1$, independent of $\tNF$, $m$, $a$, $\bn$ and $\varphi$, that is bounded by
\begin{align}
 \Csym \leq \max_{x \in \mR} \, \sym( \sF ) (x)  \leq 0.837. \label{eq:thm-SymmetryBound-2}
\end{align}

On the contrary, for general, \emph{complex-valued} signals $ \varphi \in L^2(K, \mC)$, no bound of the form \eqref{eq:thm-SymmetryBound-1} may hold true for any $\Csym < 1$.
\end{theorem}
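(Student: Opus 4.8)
The plan is to deduce the bound \eqref{eq:thm-SymmetryBound-1}--\eqref{eq:thm-SymmetryBound-2} directly from the principal leakage estimate \cref{thm:PrincipalLeakage} combined with the symmetrization \cref{lem:FiltersRealSym}, and then to construct an explicit complex-valued counterexample for the sharpness claim. For the main bound, first apply \cref{thm:PrincipalLeakage} with $\tilde K := K = \{\bx: \bn\cdot\bx \geq a\}$ and the leakage set $P_{\leak} := \{\bx : \bn\cdot\bx \geq 0\}$, so that $\Omega + P_{\leak} = K + P_{\leak} \subset K$ (a half-space is stable under adding a positive multiple of $\bn$) and $\compl{P_{\leak}} = \{\bx : \bn\cdot\bx < 0\}$ is again a half-space. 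By the isotropy and translation-invariance \eqref{eq:FresnelPropIsotropy} and the explicit half-space transform \eqref{eq:FresnelPropHalfspace}, we get $\cD(\boldsymbol 1_{\compl{P_{\leak}}})(\bxi/\tNF) = \sF(-\tNF^{-1/2}\,\bn\cdot\bxi)$, hence $p^{\leak}(\bxi) = |\sF(-\tNF^{-1/2}\,\bn\cdot\bxi)|$, which is manifestly bounded (recall $\|\sF\|_\infty \leq 1.171$), so the hypothesis $\cD(\boldsymbol 1_{\compl{P_{\leak}}}) \in L^\infty(\mR^m)$ holds. Thus \cref{thm:PrincipalLeakage} yields $\norm{\cD(\varphi)|_{\compl K}} \leq \bnorm{\hat p^{\leak}\cdot\cF(\varphi)}$ with $\hat p^{\leak}(\bxi) = |\sF(-\tNF^{-1/2}\,\bn\cdot\bxi)|$.

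Next I would invoke \cref{lem:FiltersRealSym}: since $\varphi$ is real-valued, $\bnorm{\hat p^{\leak}\cdot\cF(\varphi)} = \bnorm{\sym(\hat p^{\leak})\cdot\cF(\varphi)}$, and
\begin{align*}
 \sym(\hat p^{\leak})(\bxi) &= 2^{-1/2}\Parens{ |\sF(-\tNF^{-1/2}\,\bn\cdot\bxi)|^2 + |\sF(\tNF^{-1/2}\,\bn\cdot\bxi)|^2 }^{1/2} = \sym(\sF)(\tNF^{-1/2}\,\bn\cdot\bxi),
\end{align*}
using that $\sym(\sF)$ depends only on $|x|$ together with $|{-x}|=|x|$. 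Therefore $\bnorm{\sym(\hat p^{\leak})\cdot\cF(\varphi)} \leq \Parens{\max_{x\in\mR}\sym(\sF)(x)}\norm{\cF(\varphi)} = \Parens{\max_{x\in\mR}\sym(\sF)(x)}\norm{\varphi}$ by unitarity of $\cF$, which gives \eqref{eq:thm-SymmetryBound-1} with $\Csym \leq \max_{x\in\mR}\sym(\sF)(x)$. The numerical bound $\leq 0.837$ then follows by a one-dimensional estimate of $\sym(\sF)(x) = 2^{-1/2}(|\sF(x)|^2 + |\sF(-x)|^2)^{1/2}$; using the Fresnel-integral representation \eqref{eq:FresnelPropEdge} one writes $|\sF(\pm x)|^2 = \tfrac12((\tfrac12 \mp (\mathrm{C}\mp\mathrm{S}))^2 + (\tfrac12\pm(\mathrm{C}\pm\mathrm{S}))^2)$ with $\mathrm{C}=\mathrm{C}(\tNF^{1/2}x/\pi^{1/2})$, and maximizing the resulting smooth bounded function of $x$ over $\mR$ (equivalently, over the values of $(\mathrm{C},\mathrm{S})$ on the Cornu spiral) yields the stated constant; I would just assert this numerical maximization rather than carry it out. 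The key point making $\Csym < 1$ strictly is that $\sF$ never attains modulus $1$ simultaneously at $x$ and $-x$ — indeed $\sF(x)+\sF(-x) \to 1$ as $|x|\to\infty$ is bounded away from the regime where both terms are near their peak.

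For the sharpness assertion (no bound $\Csym<1$ for complex-valued signals on the half-space) I would exhibit a sequence $\varphi_n \in L^2(K,\mC)$ with $\norm{\cD(\varphi_n)|_{\compl K}}/\norm{\varphi_n}\to 1$. The natural choice is a pure Fourier mode $\be_{\bxi_n}$ with $\bxi_n = -t_n\tNF\,\bn$ localized in $K$ by a slowly-varying bump far from $\partial K$: by \cref{lem:FresnelPropFreqShift}, $\cD(\be_{\bxi_n}\cdot f)$ equals $\mF(\bxi_n)\be_{\bxi_n} T_{-\bxi_n/\tNF}(\cD(f))$, i.e. the propagated field is the propagation of the envelope $f$ translated by $-\bxi_n/\tNF = t_n\bn$; choosing $t_n$ large enough pushes essentially all of the mass across $\partial K$ into $\compl K$, so the ratio tends to $1$. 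Equivalently and more in the spirit of the filter estimate, for a complex signal the symmetrization in \cref{lem:FiltersRealSym} is unavailable, so one exploits that $\hat p^{\leak}(\bxi)=|\sF(-\tNF^{-1/2}\bn\cdot\bxi)|$ itself has $\sup = \|\sF\|_\infty > 1$ and that \cref{thm:PrincipalLeakage}'s inequality is essentially tight for suitably concentrated $\cF(\varphi)$; concentrating $\cF(\varphi_n)$ near an argument where $|\sF| \to 1$ (as $x\to+\infty$ along the real axis $\sF(x)\to 1$) forces $\norm{\cD(\varphi_n)|_{\compl K}}\to\norm{\varphi_n}$. The main obstacle I anticipate is the rigorous numerical step — verifying $\max_{x\in\mR}\sym(\sF)(x)\leq 0.837$ with a genuine bound rather than a plot — and, more subtly, making the counterexample fully rigorous despite $\be_{\bxi}\notin L^2$: one must control the error from the slowly-varying cutoff envelope and show it does not spoil the limit, which is exactly the kind of estimate already used (and footnoted) in the Gaussian wave-packet analysis of \secref{S:GaussObservations}.
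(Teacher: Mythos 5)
Your proposal is correct and follows essentially the same route as the paper: apply \cref{thm:PrincipalLeakage} with $P_{\leak}=\{\bx:\bn\cdot\bx\geq 0\}$ and the explicit half-space transform \eqref{eq:FresnelPropHalfspace}, symmetrize the resulting filter via \cref{lem:FiltersRealSym} to reduce the bound to $\max_x \sym(\sF)(x)$ (with the $0.837$ verified numerically, as in the paper), and defeat the complex-valued case by frequency-modulating an $L^2(K)$ envelope so that \cref{lem:FresnelPropFreqShift} translates the propagated field arbitrarily far into $\compl K$. No substantive differences.
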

\vspace{.5em}
\begin{proof}
 Let $\varphi \in L^2(K, \mR)$ be real-valued.
 If we set $P_{\leak} := \{ \bx \in \mR^m: \bn \cdot \bx \geq 0 \}$, \cref{thm:PrincipalLeakage} is applicable and we obtain by \eqref{eq:thm-PrincipalLeakage-1}
   \begin{align}
      \Norm{  \cD \Parens{ \varphi }|_{\compl{K}} } \leq   \bnorm{ \hat p^{\leak}  \cdot  \cF \Parens{ \varphi } }  \label{eq:thm-SymmetryBound-pf1} 
   \end{align}
   where the filter is given by $\hat p^{\leak}  (\bxi) = \cD(  \boldsymbol 1 _{\compl P_{\leak}} ) (\bxi /\tNF ) = \sF(-\bn\cdot \bxi / \tNF^{1/2} )$ by the analytical propagation-formula  \eqref{eq:FresnelPropHalfspace} for half-spaces. By \cref{lem:FiltersRealSym}, it follows that
   \begin{align}
    \bnorm{ \hat p^{\leak}  \cdot  \cF \Parens{ \varphi } } &= \bnorm{ \sym( \hat p^{\leak}  )  \cdot  \cF \Parens{ \varphi } } \leq  \Bparens{ \max_{\bxi\in \mR^m} \, | \sym( \hat p^{\leak}  ) (\bxi) | } \cdot \norm{ \cF \Parens{ \varphi } }  \label{eq:thm-SymmetryBound-pf2}
   \end{align}
   The result can be further simplified by using that $\hat p^{\leak} $ varies only along the axis $\bn$:
   \begin{align}
  \max_{\bxi\in \mR^m} \, | \sym( \hat p^{\leak}  ) (\bxi) |  &= 2^{-\frac 1 2 } \max_{\bxi\in \mR^m} \bparens{  |\sF(-\bn\cdot \bxi / \tNF^{1/2} )|^2 + |\sF(\bn\cdot \bxi / \tNF^{1/2} )|^2  }^{\frac 1 2} \nnl
    &= 2^{-\frac 1 2 }\max_{x \in \mR} \bparens{  |\sF(-x)|^2 + |\sF(x)|^2  }^{\frac 1 2} =   \max_{x \in \mR} \, \sym( \sF ) (x).    \label{eq:thm-SymmetryBound-pf3}
   \end{align}
   Combining \eqref{eq:thm-SymmetryBound-pf1}, \eqref{eq:thm-SymmetryBound-pf2} and \eqref{eq:thm-SymmetryBound-pf3}  yields the first assertion.

Now let us drop the assumption of real-valuedness, i.e.\ let $0 \neq h \in L^2(K(, \mC))$ be arbitrary. By \cref{lem:FresnelPropFreqShift}, the propagated intensity $|\cD(h)|^2$ may be shifted in arbitrary directions and arbitrarily far by replacing $h$ with $\tilde h : \bx \mapsto \exp(\I \bxi \cdot \bx) h(\bx)$ for a suitable $\bxi \in \mR^m$, while one still has $\tilde h \in  L^2(K)$ with $\norm{\tilde h} = \norm{h}$. By this shifting-mechanism, one may thus construct $\tilde h$  for which $|\cD(\tilde h)|^2$ is arbitrarily concentrated in $\compl K$, i.e.\ $\norm{\cD(\tilde h)|_{\compl K}} / \norm{\tilde h}$ may be arbitrarily close to 1. Hence, no non-trivial bound of the form \eqref{eq:thm-SymmetryBound-1} may hold for complex-valued signals.
\end{proof}
\vspace{.5em}

\Cref{thm:SymmetryBound} states that -- independent of any smoothness constraints (!) -- only a limited fraction of a real-valued signal may propagate out of its support in a single direction. As is also stated in the theorem, this situation is unique to the real-valued case. We note that the analytical estimate for the constant $\Csym$ is not optimal:

\vspace{.5em}
\begin{rem}[Optimal value of $\Csym$] \label{rem:Csym}
 Numerical eigenvalue computations (not shown) indicate that the optimal value of the symmetric-propagation constant is given by $\Csym \approx 0.721$. Accordingly, at most a fraction of $0.721^2 \approx 0.52$ of the intensity of a real-valued signal may leak out of the field-of-view along a single direction.
\end{rem}
\vspace{.5em}

\vspace{.5em}
\subsection{Construction of improved leakage bounds}  \label{SS:RealLeakageBounds}

Next, we  extend the quasi-symmetric propagation bound in \cref{thm:SymmetryBound} from half-spaces to the more practically relevant case of square FoVs. In such a setting, the propagated signal may always leak out of the detection domain along two opposite directions so that (quasi-)symmetric propagation alone may not guarantee finite leakage.
Instead, we have to combine the latter principle with the detector-distance-based leakage estimates of the preceding sections.
The idea is simple: along each direction, we can decompose an object-signal into a part with support close to the detector-boundary $\partial K$, to be bounded by exploiting quasi-symmetric propagation, and another part that is concentrated far away from $\partial K$ and which thus can be bounded using the theory from \secref{S:BoundsComplex}. We first prove such a bound for half-spaces:

\vspace{.5em}

\begin{lemma} \label{lem:SymmetryBound} 
 For $\bn \in \mS^{m-1}$, $a \in \mR$ and $\Delta > 0$, let $K := \{ \bx \in \mR^m: \bn \cdot \bx \geq a \}$, $\Omega = K$ and $\Omega_{\leq \Delta} := \Omega \cap \{ \bx \in \mR^m:  \bn \cdot \bx \leq a +\Delta  \}$. Then it holds that
\begin{align}
 \bnorm{\cD(\varphi)|_{\compl{K}}} &\leq 2^{-\frac 1 2} \bnorm{  \hat p^{\leak}_{\tNF, \tNFa {\Delta}}  \bparens{ \bn\cdot(\cdot) }  \cdot \cF(\varphi) } + \Csyma{\tNFa {\Delta}} \bnorm{ \varphi|_{\Omega_{\leq \Delta} } }  \FA \varphi \in L^2(\Omega, \mR)   \label{eq:lem-SymmetryBound-1} 
\end{align}
with $\tNFa {\Delta} = \Delta^2 \tNF$ and $\hat p^{\leak}_{\tNF, \tNFa {\Delta}}$ as in \cref{thm:LeakageSquareSimple} and the constant $\Csyma{\tNFa {\Delta}}$ is given by
\begin{align}
  \Csyma{\tNFa {\Delta}} &:= \max_{x \in \mR} \, \sym\parens{\sF_{  \tNFa {\Delta}} }(x), \quad \sF_{  \tNFa {\Delta}} (x) := \sF(x) - \sF( x- \tNFa {\Delta} ^{\frac 1 2 }).   \label{eq:lem-SymmetryBound-3} 
\end{align}
\end{lemma}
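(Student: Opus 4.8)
Here is how I would prove \cref{lem:SymmetryBound}.

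The plan is to split the Fresnel kernel $\kF$ — not the object $\varphi$ — into three slices in the convolution–offset variable, discard the slice that never reaches $\compl K$, and estimate the remaining two separately: the slice near $\partial K$ will be controlled by quasi‑symmetry \emph{combined with} its spatial localization, the slice far from $\partial K$ by quasi‑symmetry combined with the smoothing produced by the gap of width $\Delta$. By the translation invariance \eqref{eq:FresnelPropIsotropy} of $\cD$ I may assume $a = 0$, so that $K = H_{0,\bn}$, $\Omega = K$ and $\Omega_{\leq\Delta} = \{\bx : 0 \le \bn\cdot\bx \le \Delta\}$; and since both sides of \eqref{eq:lem-SymmetryBound-1} are continuous in $\varphi \in L^2(\Omega,\mR)$ (the restriction $\varphi \mapsto \varphi|_{\Omega_{\leq\Delta}}$ being norm–nonincreasing), it suffices to treat real‑valued $\varphi \in \CcInf{K^\circ}$, which are dense in $L^2(\Omega,\mR)$ and for which \eqref{eq:FresnelPropConvForm} applies and all convolutions below converge absolutely.

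Introduce the slab $S' := \{\bx : -\Delta \le \bn\cdot\bx < 0\}$, so that $\mR^m$ is the disjoint union of $\compl{H_{-\Delta,\bn}}$, $S'$ and $H_{0,\bn}$ and correspondingly $\cD(\varphi) = \kF \ast \varphi = (\boldsymbol 1_{H_{0,\bn}}\kF)\ast\varphi + (\boldsymbol 1_{S'}\kF)\ast\varphi + (\boldsymbol 1_{\compl{H_{-\Delta,\bn}}}\kF)\ast\varphi$. Because $\supp\varphi \subset H_{0,\bn}$ and $H_{0,\bn} + H_{0,\bn} = H_{0,\bn}$, the first term is supported in $H_{0,\bn} = K$ and hence vanishes on $\compl K$ (this is exactly the mechanism in the proof of \cref{thm:PrincipalLeakage}); thus $\cD(\varphi)|_{\compl K}$ is the sum of the restrictions to $\compl K$ of the other two terms, which I bound by the triangle inequality.

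For the far slice I apply the bound \eqref{eq:lem-LocalFresnel-2} with $P = \compl{H_{-\Delta,\bn}}$ and $A = \compl K$ — legitimate since $\cD(\boldsymbol 1_{\compl{H_{-\Delta,\bn}}}) \in L^\infty$ by \eqref{eq:FresnelPropCompl}, \eqref{eq:FresnelPropHalfspace} and $\max\abs{\sF} \le 1.171$ — obtaining $\bnorm{((\boldsymbol 1_{\compl{H_{-\Delta,\bn}}}\kF)\ast\varphi)|_{\compl K}} \le \bnorm{\abs{\cD(\boldsymbol 1_{\compl{H_{-\Delta,\bn}}})(\cdot/\tNF)}\cdot\cF(\varphi)}$, where \eqref{eq:FresnelPropCompl}, \eqref{eq:FresnelPropHalfspace} and the identity $\sF(z) = 1 - \sF(-z)$ (a consequence of $\cD(1)=1$ and isotropy \eqref{eq:FresnelPropIsotropy}) give $\cD(\boldsymbol 1_{\compl{H_{-\Delta,\bn}}})(\bxi/\tNF) = \sF(-\tNF^{-1/2}\bn\cdot\bxi - \tNFa{\Delta}^{1/2})$. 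Since $\varphi$ is real, \cref{lem:FiltersRealSym} replaces this filter by its symmetrization, which by the definition \eqref{eq:thm-LeakageSquareSimple-2} of $\eFd$ equals $2^{-1/2}\eFd(\tNF^{-1/2}\bn\cdot\bxi) = 2^{-1/2}\hat p^{\leak}_{\tNF, \tNFa {\Delta}}(\bn\cdot\bxi)$; this is the first term of \eqref{eq:lem-SymmetryBound-1}, and it acts on all of $\cF(\varphi)$. For the near slice the crucial observation is that whenever $\bx \in \compl K$ and $\by \in S'$, the requirement $\bx - \by \in \supp\varphi \subset K$ forces $\bn\cdot(\bx-\by) \in [0,\Delta]$, i.e.\ $\bx - \by \in \Omega_{\leq\Delta}$; hence $((\boldsymbol 1_{S'}\kF)\ast\varphi)|_{\compl K} = ((\boldsymbol 1_{S'}\kF)\ast(\varphi|_{\Omega_{\leq\Delta}}))|_{\compl K}$. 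Applying \eqref{eq:lem-LocalFresnel-2} with $P = S'$ (again $\cD(\boldsymbol 1_{S'}) \in L^\infty$, being a difference of two bounded half‑space transforms) and $h = \varphi|_{\Omega_{\leq\Delta}}$ gives the bound $\bnorm{\abs{\cD(\boldsymbol 1_{S'})(\cdot/\tNF)}\cdot\cF(\varphi|_{\Omega_{\leq\Delta}})}$; writing $\boldsymbol 1_{S'} = \boldsymbol 1_{H_{-\Delta,\bn}} - \boldsymbol 1_{H_{0,\bn}}$ and using \eqref{eq:FresnelPropHalfspace} together with $\sF(z) = 1 - \sF(-z)$ yields $\cD(\boldsymbol 1_{S'})(\bxi/\tNF) = \sF_{\tNFa{\Delta}}(-\tNF^{-1/2}\bn\cdot\bxi)$ with $\sF_{\tNFa{\Delta}}$ as in \eqref{eq:lem-SymmetryBound-3}; as $\varphi|_{\Omega_{\leq\Delta}}$ is real, \cref{lem:FiltersRealSym} replaces this by $\sym(\sF_{\tNFa{\Delta}})(\tNF^{-1/2}\bn\cdot(\cdot))$, whose supremum over $\mR^m$ is $\Csyma{\tNFa{\Delta}}$ by \eqref{eq:lem-SymmetryBound-3}. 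Adding the two estimates gives \eqref{eq:lem-SymmetryBound-1}.

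The genuine content is the localization identity $((\boldsymbol 1_{S'}\kF)\ast\varphi)|_{\compl K} = ((\boldsymbol 1_{S'}\kF)\ast(\varphi|_{\Omega_{\leq\Delta}}))|_{\compl K}$: it is what lets the non‑decaying constant $\Csyma{\tNFa{\Delta}}$ be charged only to the boundary layer $\varphi|_{\Omega_{\leq\Delta}}$ rather than to all of $\varphi$, which will matter when the half‑space bound is later summed over the faces of a square detector. Everything else is bookkeeping with the explicit half‑space and slab transforms of \secref{SS:BoundsComplex-ExplicitFilters}, the elementary identity $\sF(z) = 1 - \sF(-z)$, the density reduction making the three‑fold kernel splitting and the support‑of‑convolution estimate unambiguous, and a few applications of \cref{lem:FiltersRealSym}. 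I expect the only step demanding care is tracking the $\sF$‑identities and scalings so that, under $\sym$, $\cD(\boldsymbol 1_{S'})$ and $\cD(\boldsymbol 1_{\compl{H_{-\Delta,\bn}}})$ collapse onto the single profiles $\sym(\sF_{\tNFa{\Delta}})$ and $2^{-1/2}\eFd$ respectively, independently of the sign conventions.
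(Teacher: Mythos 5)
Your proposal is correct and follows essentially the same route as the paper: the identical three-fold splitting of the Fresnel kernel along $\bn$, discarding the slice supported in $K$ via the support-of-convolutions argument, bounding the far slice on all of $\varphi$ and the boundary slice only on $\varphi|_{\Omega_{\leq\Delta}}$ via \cref{lem:LocalFresnel}, and symmetrizing both filters with \cref{lem:FiltersRealSym}. The only (immaterial) differences are that you work directly with half-spaces in $\mR^m$ rather than first reducing to $m=1$ by separability, and that you localize the near-slice contribution to $\varphi|_{\Omega_{\leq\Delta}}$ by a pointwise argument on the convolution integrand instead of splitting $\varphi = \varphi_\L + \varphi_\R$ and applying the support estimate to $\varphi_\R$ — these are logically equivalent.
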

\vspace{.5em}
\begin{proof}
 By separability \eqref{eq:FresnelPropSeparable} and isotropy \eqref{eq:FresnelPropIsotropy}, it is sufficient to prove the claim for the 1D-setting $m = 1$, $\bn = 1$, $a = 0$, $K = \Omega = [0; \infty)$ and $\Omega_{\leq \Delta} = [0; \Delta]$. 
 
 Thus, let $\varphi \in L^2(K,\mR)$ be arbitrary.
We follow a similar approach as in \cref{thm:PrincipalLeakage}:
using the convolution-form of the Fresnel-propagator \eqref{eq:FresnelPropConvForm}, we obtain
 \begin{align}
  \cD(\varphi)|_{\compl K} &= \bparens{ \kF \ast \varphi }|_{\compl K}  = \bparens{ \bparens{ \kF|_{(-\infty; -\Delta)} + \kF|_{[-\Delta;0]} + \kF|_{K}  } \ast \varphi }|_{\compl K}  \nnl
  &= \bparens{\kF|_{(-\infty; -\Delta)} \ast \varphi }|_{\compl K} + \bparens{\kF|_{[-\Delta;0]} \ast \varphi }|_{\compl K} + \bparens{\kF|_{K  }  \ast \varphi }|_{\compl K}  \label{eq:lem-SymmetryBound-pf2}
 \end{align}
  Now we decompose $\varphi$ into left-hand- and right-hand parts, $\varphi = \varphi_\L + \varphi_\R$ with $\varphi_\L:=\varphi|_{\Omega_{\leq \Delta}}$, $\varphi_\R:=\varphi|_{\compl {\Omega_{\leq \Delta}}}$.
  By standard results on the support of convolutions, we then have
 \begin{align}
  \supp(\kF|_{K} \ast \varphi) &\subset \supp(\kF|_{K}) + \supp(\varphi) \subset K + K = K \nnl
  \supp(\kF|_{[-\Delta;0]} \ast \varphi_\R) &\subset \supp(\kF|_{[-\Delta;0]}) + \supp(\varphi_\R) \subset [-\Delta;0] + [\Delta; \infty) = K \nonumber
 \end{align}
 Together with \eqref{eq:lem-SymmetryBound-pf2}, this implies that $ \cD(\varphi)|_{\compl K} = \bparens{\kF|_{(-\infty;-\Delta)} \ast \varphi }|_{\compl K} + \bparens{\kF|_{[-\Delta; 0]} \ast \varphi_\L }|_{\compl K} $.
  An application of the triangle inequality and \cref{lem:LocalFresnel} thus yields
   \begin{align}
  \Norm{ \cD(\varphi)|_{\compl K} } &\leq  \Norm{ \bparens{ \kF|_{(-\infty;-\Delta)} \ast \varphi }|_{\compl K} } + \Norm{ \bparens{\kF|_{[-\Delta; 0]} \ast \varphi_\L }|_{\compl K} } \nnl
  &\leq  \Norm{    \cD( \boldsymbol 1_ {(-\infty;-\Delta)} )(\cdot/\tNF) \cdot \cF \parens{ \varphi }  } + \Norm{ \cD( \boldsymbol 1_ {[-\Delta; 0]} )(\cdot/\tNF) \cdot \cF \parens{ \varphi_\L }  } \label{eq:lem-SymmetryBound-pf4}
 \end{align}
Using the exact propagation-formulas from \secref{SS:BoundsComplex-ExplicitFilters}, we get
\begin{align}
 \cD( \boldsymbol 1_ {(-\infty;-\Delta)} )(\xi/\tNF) &= \sF \bparens{ - \xi  / \tNF^{\frac 1 2} - \tNFa {\Delta}^{\frac 1 2} }  \nnl 
 \cD( \boldsymbol 1_ {[-\Delta; 0]} )(\xi/\tNF) &=  \cD( \boldsymbol 1_ {(-\infty;0)} )(\xi/\tNF) - \cD( \boldsymbol 1_ {(-\infty;-\Delta)} )(\xi/\tNF)  \nnl
 &= \sF \bparens{ - \xi  / \tNF^{\frac 1 2}  } - \sF \bparens{ - \xi  / \tNF^{\frac 1 2} - \tNFa {\Delta} ^{\frac 1 2} }  = \sF_{\tNFa {\Delta}} (- \xi / \tNF^{\frac 1 2})  \label{eq:lem-SymmetryBound-pf5}
\end{align}
Moreover, since $\varphi$ and thus $\varphi_\L$ are real-valued, \cref{lem:FiltersRealSym} is applicable. Thus,
\begin{align}
  \Norm{    \cD( \boldsymbol 1_ {(-\infty;-\Delta)} )(\cdot/\tNF) \cdot \cF \parens{ \varphi }  } &= \bnorm{  \sym \bparens{ \sF \bparens{ (- \cdot)  / \tNF^{\frac 1 2} - \tNFa {\Delta}^{\frac 1 2} } } \cdot \cF \parens{ \varphi }  }   \stackrel{\eqref{eq:thm-LeakageSquareSimple-1}}= 2^{-\frac 1 2} \Norm{   \hat p^{\leak}_{\tNF, \tNFa {\Delta}}  \cdot \cF(\varphi) } \nnl
   \Norm{ \cD( \boldsymbol 1_ {[-\Delta; 0]} )(\cdot/\tNF) \cdot \cF \parens{ \varphi_\L }  }  &=  \bnorm{ \sF_{\tNFa {\Delta}} (-\cdot /\tNF^{\frac 1 2})\cdot \cF \parens{ \varphi_\L }  } = \bnorm{ \sym( \sF_{\tNFa {\Delta}}  )(\cdot /\tNF^{\frac 1 2})\cdot \cF \parens{ \varphi_\L }  } \nnl
   &\leq \Bparens{ \max_{x\in \mR} \sym( \sF_{\tNFa {\Delta}}  ) (x) } \Norm{ \varphi_\L } = \Csyma {\tNFa {\Delta}} \Norm{ \varphi_\L }  \label{eq:lem-SymmetryBound-pf6}
\end{align}
Substituting \eqref{eq:lem-SymmetryBound-pf6} into \eqref{eq:lem-SymmetryBound-pf4} now yields the assertion.
\end{proof}
\vspace{.5em}

 Note that the constant $\Csyma{\tNFa {\Delta}}$ in \eqref{eq:lem-SymmetryBound-1}  attains almost the same values as $\Csym$ within the relevant regime $\tNFa {\Delta} \gg 1$.
 Next, we extend \cref{lem:SymmetryBound} to square detectors $K = \SquareDom$ by decomposing $\compl K$ into half-spaces. By far the strongest result is obtained for a 1D-case:

 \vspace{.5em}
\begin{theorem}[Leakage estimate for real-valued objects in 1D-intervals] \label{thm:LeakageRealInterval}
 Let $m = 1$, and $\Omega = K = [-\frac 1 2; \frac 1 2]$. Then it holds that
 \begin{align}
  \Norm{\cD(\varphi)|_{\compl K}} \! &\leq \bnorm{  \hat p^{\leak}_{\tNF, \tNF/4} \cdot \cF(\varphi) }\! + \Csyma{\tNF/4}\norm{\varphi} \MTEXT{for all} \varphi \in L^2(\Omega, \mR).  \label{eq:thm-LeakageRealInterval-1}
 \end{align}
\end{theorem}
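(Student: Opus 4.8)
The plan is to deduce the claim from two applications of \cref{lem:SymmetryBound}, one for the leakage across each endpoint of the interval $K = [-\frac 1 2; \frac 1 2]$, and then to combine the two estimates by an elementary $\ell^2$-triangle inequality. First I would write $\compl K$ as a union of two half-lines: with $H_+ := (-\infty; \frac 1 2]$ and $H_- := [-\frac 1 2; \infty)$ one has $\compl{H_+} = (\frac 1 2; \infty)$ and $\compl{H_-} = (-\infty; -\frac 1 2)$, which are disjoint up to the Lebesgue-null set $\{\pm \frac 1 2\}$, so that
\begin{align*}
 \Norm{\cD(\varphi)|_{\compl K}}^2 = \Norm{\cD(\varphi)|_{\compl{H_+}}}^2 + \Norm{\cD(\varphi)|_{\compl{H_-}}}^2 .
\end{align*}

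Next I would apply \cref{lem:SymmetryBound} to each half-line with the choice $\Delta = \frac 1 2$, so that $\tNFa{\Delta} = \Delta^2 \tNF = \tNF/4$ as in the assertion. For $H_+$ one takes $\bn = -1$, $a = -\frac 1 2$; since $\supp(\varphi) \subset [-\frac 1 2; \frac 1 2] \subset H_+$, extending $\varphi$ by zero makes it a legitimate element of $L^2(H_+, \mR)$, and the boundary strip in \cref{lem:SymmetryBound} becomes $\Omega_{\leq \Delta} = H_+ \cap \{ x \geq a + \Delta \} = [0; \frac 1 2]$. Symmetrically, for $H_-$ one takes $\bn = 1$, $a = -\frac 1 2$, giving the strip $[-\frac 1 2; 0]$. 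Two points make the bound come out in the stated sharp form. First, in one dimension the filter $\hat p^{\leak}_{\tNF, \tNF/4}$ equals $\abs{\eFd(\cdot/\tNF^{1/2})}$ with $\eFd$ an \emph{even} function (directly from its definition in \cref{thm:LeakageSquareSimple}), so the factor $\hat p^{\leak}_{\tNF, \tNF/4}(\bn \cdot (\cdot))$ appearing in \eqref{eq:lem-SymmetryBound-1} is the same, namely $\hat p^{\leak}_{\tNF, \tNF/4}(\cdot)$, for both choices $\bn = \pm 1$. Second, the two strips $[-\frac 1 2; 0]$ and $[0; \frac 1 2]$ \emph{partition} $\supp(\varphi)$, so $\norm{\varphi|_{[0;1/2]}}^2 + \norm{\varphi|_{[-1/2;0]}}^2 = \norm{\varphi}^2$. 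Writing $b := 2^{-1/2}\bnorm{\hat p^{\leak}_{\tNF, \tNF/4}\cdot\cF(\varphi)}$, $c_+ := \Csyma{\tNF/4}\norm{\varphi|_{[0;1/2]}}$ and $c_- := \Csyma{\tNF/4}\norm{\varphi|_{[-1/2;0]}}$, \cref{lem:SymmetryBound} then gives $\norm{\cD(\varphi)|_{\compl{H_\pm}}} \leq b + c_\pm$.

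Finally I would combine the two estimates: using that $t \mapsto t^2$ is increasing on $[0;\infty)$ and then Minkowski's inequality in $\ell^2(\{+,-\})$ (i.e.\ $(b,b) + (c_+, c_-)$),
\begin{align*}
 \Norm{\cD(\varphi)|_{\compl K}} \leq \bparens{ (b+c_+)^2 + (b+c_-)^2 }^{1/2} \leq \sqrt 2 \, b + \bparens{ c_+^2 + c_-^2 }^{1/2} .
\end{align*}
Here $\sqrt 2 \, b = \sqrt 2 \cdot 2^{-1/2}\bnorm{\hat p^{\leak}_{\tNF, \tNF/4}\cdot\cF(\varphi)} = \bnorm{\hat p^{\leak}_{\tNF, \tNF/4}\cdot\cF(\varphi)}$, and by the partition property $\bparens{ c_+^2 + c_-^2 }^{1/2} = \Csyma{\tNF/4}\norm{\varphi}$, which is precisely \eqref{eq:thm-LeakageRealInterval-1}.

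I do not expect a genuine obstacle: the analytic content is carried entirely by \cref{lem:SymmetryBound}, and what remains is the geometric splitting together with the two elementary observations above. The one place that needs care is the choice $\Delta = \frac 1 2$ and the accompanying check that the resulting strips $[-\frac 1 2; 0]$ and $[0; \frac 1 2]$ tile $\supp(\varphi)$ exactly: a smaller $\Delta$ leaves the middle portion of $\varphi$ unaccounted for by either symmetry term, while a larger $\Delta$ violates $\Omega_{\leq\Delta} \subset [-\frac 1 2; \frac 1 2]$, so $\Delta = \frac 1 2$ (hence the Fresnel number $\tNF/4$) is forced, and it is exactly this that lets the two $\Csym$-contributions collapse into a single $\Csyma{\tNF/4}\norm{\varphi}$ rather than twice that.
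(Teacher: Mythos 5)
Your proposal is correct and follows essentially the same route as the paper: split $\compl K$ into the two half-lines beyond $\pm\frac 1 2$, apply \cref{lem:SymmetryBound} to each with $\Delta = \frac 1 2$ (hence $\tNF/4$), use evenness of $\hat p^{\leak}_{\tNF,\tNF/4}$ and the $L^2$-orthogonality of $\varphi|_{[-1/2;0]}$ and $\varphi|_{[0;1/2]}$, and combine. Your final Minkowski-in-$\ell^2$ step is just a repackaging of the paper's expansion of the squares together with $\norm{\varphi_\L}+\norm{\varphi_\R}\leq 2^{1/2}\norm{\varphi}$, so there is no substantive difference.
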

\vspace{.5em}
\begin{proof}
Let $\varphi \in L^2(\Omega, \mR)$ be arbitrary.
 We decompose $\compl K$ into a left-hand and a right-hand part: $\compl K = L \cup R$ with $L = (-\infty; -\frac 1 2)$, $R = -L$. Then it holds that
 \begin{align}
  \bnorm{\cD(\varphi)|_{\compl K}}^2 = \bnorm{\cD(\varphi)|_{L}}^2 + \bnorm{\cD(\varphi)|_{R}}^2 \label{eq:thm-LeakageRealInterval-pf1}
 \end{align}
 If we set $a = -\frac 1 2$, $d= \frac 1 2$, $\bn = 1$, $K = \compl L$ and $\Omega_{\leq \Delta} = [-\frac 1 2; 0]$, it can be seen that the assumptions of \cref{lem:SymmetryBound} are satisfied. Thus, we obtain
 \begin{align}
   \bnorm{\cD(\varphi)|_{L}} &\leq  2^{-\frac 1 2} \bnorm{ \hat p^{\leak}_{\tNF, \tNF/4} \cdot \cF(\varphi) } + \Csyma{\tNF/4} \bnorm{ \varphi_\L }, \label{eq:thm-LeakageRealInterval-pf2}
 \end{align}
 where $\varphi_\L := \varphi|_{[-\frac 1 2; 0]} $ denotes the left-hand part of $\varphi$. If we define $\varphi_\R := \varphi|_{[0; \frac 1 2]} $, an analogous estimate can be obtained for the right-hand domain $R$:
 \begin{align}
   \bnorm{\cD(\varphi)|_{R}} &\leq  2^{-\frac 1 2} \bnorm{  \hat p^{\leak}_{\tNF, \tNF/4} (-\cdot) \cdot \cF(\varphi) } + \Csyma{\tNF/4} \bnorm{ \varphi_\R } \nnl
   &= 2^{-\frac 1 2} \bnorm{  \hat p^{\leak}_{\tNF, \tNF/4} \cdot \cF(\varphi) } + \Csyma{\tNF/4} \bnorm{ \varphi_\R }, \label{eq:thm-LeakageRealInterval-pf3}
 \end{align} 
 where it has been exploited that $\hat p _{\leak,  \tNF/4 }^{\real}$ is an even function by definition.
 
 Now we apply the estimates \eqref{eq:thm-LeakageRealInterval-pf2} and \eqref{eq:thm-LeakageRealInterval-pf3} to \eqref{eq:thm-LeakageRealInterval-pf1} and exploit that $\varphi_\L$ and $\varphi_\R$ are $L^2$-orthogonal so that $\norm{\varphi_\L}^2 + \norm{\varphi_\R}^2= \norm{\varphi}^2$ and $\norm{\varphi_\L} + \norm{\varphi_\R} \leq 2^{1/2} \norm{\varphi}$:
 \begin{align}
   \bnorm{\cD(\varphi)|_{\compl K}}^2 &\leq  \bnorm{ \hat p^{\leak}_{\tNF, \tNF/4} \cdot \cF(\varphi) }^2 +   \Csyma{\tNF/4}^2 \Parens{  \bnorm{ \varphi_\L }^2 + \bnorm{ \varphi_\R }^2 } \nnl
   &\quad + 2^{\frac 1 2} \Csyma{\tNF/4} \bnorm{ \hat p^{\leak}_{\tNF, \tNF/4} \cdot \cF(\varphi) } \Parens{  \bnorm{ \varphi_\L } + \bnorm{ \varphi_\R } } \nnl
   &\leq \bparens{ \bnorm{  \hat p^{\leak}_{\tNF, \tNF/4} \cdot \cF(\varphi) } +  \Csyma{\tNF/4}  \bnorm{ \varphi  }  }^2.  \nonumber 
 \end{align}
\end{proof}
\vspace{.5em}

 As seen in \secref{SS:BoundsComplex-ExplicitFilters}, $\hat p^{\leak}_{\tNF, \tNF/4}$ acts as high-pass filter with cutoff-frequency $|\xi| \approx \tNF/2$.  Provided that $\Csyma{\tNF/4} < 1$ is small enough, \eqref{eq:thm-LeakageRealInterval-1} thus guarantees positive contrast $\norm{\cD(\varphi)|_K} \geq ( 1 - \Csyma{\tNF/4}^2 - \varepsilon^2 )^{1/2} \norm{\varphi}$ for some small $\varepsilon > 0$ if $\cF(\varphi)$ is concentrated within the interval $[-\tNF/2;\tNF/2]$. This indicates that image-reconstruction is stable down to features of size $r \gtrsim 2 \pi / \tNF$, which is already the \emph{upper} limit for the achievable resolution by \secref{SSS:GaussianResoEstimatesComplex}. Moreover, as the object-domain is $\Omega = K$ in \cref{thm:LeakageRealInterval}, this optimal resolution can be obtained in the entire FoV!
 
 However, the surprisingly strong 1D-result does \emph{not} carry over to higher dimensions 
 because square detectors $K = \SquareDom$ for $m > 1$ have \emph{corners}, close to which image-reconstruction is unstable down to low spatial frequencies as found in \secref{SSS:GaussianResoEstimatesReal}. We have to exclude the considered objects from having support in these unstable regions: 
 
 \vspace{.5em}
\begin{theorem}[Leakage estimate for real-valued objects in square domains] \label{thm:LeakageRealSquare}
Let $K = [-\frac 1 2 ; \frac 1 2]^m$ and  $\Omega  := \bigcup_{j = 1}^m S_{\Delta,j}$ with $S_{\Delta,j} := \mR^{j-1} \times [-\frac 1 2 + \Delta ; \frac 1 2 - \Delta] \times \mR^{m-j}$ for $0 < \Delta < \frac 1 2$. Then it holds that
 \begin{align}
  \Norm{\cD(\varphi)|_{\compl K}} &\leq \bnorm{ \hat p^{\leak}_{ \tNF, \tNFa {\Delta} } \cdot \cF(\varphi) } + \Csyma{\tNFa {\Delta}}\norm{\varphi|_{\Omega_{\leq \Delta} } } \MTEXT{for all} \varphi \in L^2(\Omega, \mR),
   \end{align}
   where $\Omega_{\leq \Delta} := \Omega \setminus (-\frac 1 2 + \Delta ; \frac 1 2 - \Delta)^m$ denotes the part of $\Omega$ with distance less than $\Delta$ to $\partial K$. 
\end{theorem}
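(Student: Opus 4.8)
The plan is to mimic the one-dimensional argument of \cref{thm:LeakageRealInterval}: reduce leakage through $\compl K$ to leakage through the $2m$ faces of the cube $K=\SquareDom$ and control each by the half-space estimate \cref{lem:SymmetryBound}. Let $H_{j,+}:=\{\bx:x_j\ge-\tfrac12\}$ and $H_{j,-}:=\{\bx:x_j\le\tfrac12\}$ be the half-spaces bounded by the two faces of $K$ orthogonal to the $j$-th axis. Then $\compl K=\bigcup_{j=1}^m(\compl{H_{j,+}}\cup\compl{H_{j,-}})$, whence $\Norm{\cD(\varphi)|_{\compl K}}^2\le\sum_{j,\pm}\Norm{\cD(\varphi)|_{\compl{H_{j,\pm}}}}^2$. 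Since $\varphi$ is real-valued and $\supp(\varphi)\subset\Omega\subset K\subset H_{j,\pm}$, \cref{lem:SymmetryBound} applies to each of these half-spaces (with its ``$\Omega$'' taken as $H_{j,\pm}$, $\bn=\pm\be_j$, $a=-\tfrac12$ and the given $\Delta$); its near-boundary term only sees $\supp(\varphi)\subset\Omega$, the relevant one-dimensional filter equals $\bxi\mapsto\eFd(\tNF^{-1/2}\be_j\cdot\bxi)$, and $\eFd$ is even. For a suitable pairing of signs one thus gets, for every $j$,
\[
 \Norm{\cD(\varphi)|_{\compl{H_{j,\pm}}}}\le 2^{-1/2}\bnorm{\eFd\bparens{\tNF^{-1/2}\,\be_j\cdot(\cdot)}\cdot\cF(\varphi)}+\Csyma{\tNFa{\Delta}}\bnorm{\varphi|_{\Omega_{j,\pm}}},\qquad \Omega_{j,\pm}:=\Omega\cap\{\bx:\pm x_j\ge\tfrac12-\Delta\}.
\]

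Next I would assemble these $2m$ inequalities. Put $A_j:=\bnorm{\eFd(\tNF^{-1/2}\be_j\cdot(\cdot))\cdot\cF(\varphi)}$, so $\sum_jA_j^2=\bnorm{\hat p^{\leak}_{\tNF,\tNFa{\Delta}}\cdot\cF(\varphi)}^2=:A^2$ by the definition of $\hat p^{\leak}_{\tNF,\tNFa{\Delta}}$ in \cref{thm:LeakageSquareSimple}, and $b_{j,\pm}:=\norm{\varphi|_{\Omega_{j,\pm}}}$, $C:=\Csyma{\tNFa{\Delta}}$. Squaring and summing yields
\[
 \Norm{\cD(\varphi)|_{\compl K}}^2\le\sum_{j=1}^m\Bparens{\bparens{2^{-1/2}A_j+Cb_{j,-}}^2+\bparens{2^{-1/2}A_j+Cb_{j,+}}^2}=A^2+\sqrt2\,C\sum_jA_j(b_{j,-}+b_{j,+})+C^2\!\!\sum_{j,\pm}b_{j,\pm}^2.
\]
The cross-shaped geometry of $\Omega=\bigcup_jS_{\Delta,j}$ is what makes this collapse: a point of $\Omega$ within distance $\Delta$ of a face orthogonal to the $j$-th axis cannot lie in $S_{\Delta,j}$, hence lies in a transverse slab $S_{\Delta,i}$, $i\ne j$; together with $\Omega_{\le\Delta}=\Omega\setminus(-\tfrac12+\Delta;\tfrac12-\Delta)^m$ this gives $\bigcup_{j,\pm}\Omega_{j,\pm}=\Omega_{\le\Delta}$ with the $\Omega_{j,\pm}$ essentially pairwise disjoint, so $\sum_{j,\pm}b_{j,\pm}^2=\norm{\varphi|_{\Omega_{\le\Delta}}}^2=:\tilde B^2$. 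A Cauchy--Schwarz step, $\sum_jA_j(b_{j,-}+b_{j,+})\le A\bparens{\sum_j(b_{j,-}+b_{j,+})^2}^{1/2}\le A\bparens{2\sum_{j,\pm}b_{j,\pm}^2}^{1/2}=\sqrt2\,A\tilde B$, then reduces the bound to $A^2+2AC\tilde B+C^2\tilde B^2=(A+C\tilde B)^2$, and taking square roots gives the claim.

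I expect the delicate point to be exactly the multiplicity bookkeeping in the middle paragraph --- keeping the symmetric-propagation coefficient equal to $\Csyma{\tNFa{\Delta}}$ rather than letting it degrade by a dimensional factor. For $m\le2$ this is automatic, since in the cross no point can be within $\Delta$ of two distinct faces. For $m\ge3$ a point of $\Omega$ can be close to faces orthogonal to two different axes (it then lies in an ``edge'' region of $K$ that the cross does not excise), so the $\Omega_{j,\pm}$ genuinely overlap and the naive sum over-counts. To repair this I would not black-box \cref{lem:SymmetryBound} but instead partition $\Omega_{\le\Delta}=\bigsqcup_{j,\pm}\tilde\Omega_{j,\pm}$ by assigning each $\varphi$-mass to a single face via a fixed ordering, and then redo the kernel-splitting from the proof of \cref{lem:SymmetryBound} directly on the cube so that the near-boundary convolution responsible for face $(j,\pm)$ only involves $\varphi|_{\tilde\Omega_{j,\pm}}$ (the relevant slice of the Fresnel kernel still has its $x_j$-support in $[-\Delta,0]$, so the real-valued symmetrization of \cref{lem:FiltersRealSym} goes through verbatim). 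Everything else --- the half-space reduction, the computation of the half-space filters from \eqref{eq:FresnelPropHalfspace}, and the Cauchy--Schwarz algebra --- is a direct transcription of the proofs of \cref{lem:SymmetryBound} and \cref{thm:LeakageRealInterval}.
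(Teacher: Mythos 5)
Your argument reproduces the paper's own proof essentially verbatim: the same decomposition of $\compl K$ into the $2m$ half-space complements $\compl{K_{j,\pm}}$, the same application of \cref{lem:SymmetryBound} with $a=-\tfrac 1 2$, $\bn = \pm\be_j$, the same identification $\sum_j A_j^2 = \bnorm{\hat p^{\leak}_{\tNF,\tNFa{\Delta}}\cdot\cF(\varphi)}^2$ via \eqref{eq:thm-LeakageSquareSimple-1}, and the same Cauchy--Schwarz assembly collapsing the sum of squares to $(A+C\tilde B)^2$. For $m\le 2$ (in particular the practically relevant case $m=2$) the $\Omega_{j,\pm}$ are indeed pairwise disjoint up to null sets, exactly as you argue, and your proof is complete and coincides with the paper's.

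Your reservation about $m\ge 3$ is not merely a stylistic worry: the paper's proof asserts that ``the choice of $\Omega$ ensures that the sub-domains $\{\Omega_{j,\pm}\}$ are mutually disjoint (up to intersections of measure zero)'', and for $m\ge 3$ this is false --- e.g.\ for $m=3$ the positive-measure set $\{x_1,x_2\in[-\tfrac 1 2;-\tfrac 1 2+\Delta],\,x_3\in[-\tfrac 1 2+\Delta;\tfrac 1 2-\Delta]\}$ lies in $S_{\Delta,3}\subset\Omega$ and in both $\Omega_{1,+}$ and $\Omega_{2,+}$. The resulting overcount gives $\sum_{j,\pm}\norm{\varphi_{j,\pm}}^2\ge\norm{\varphi|_{\Omega_{\le\Delta}}}^2$, which is the wrong direction, so the last step does not close; you have located a genuine gap that the paper itself glosses over. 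However, your sketched repair is not sound as stated: if the near-boundary piece assigned to face $(j,\pm)$ is shrunk to some $\tilde\Omega_{j,\pm}\subsetneq\Omega_{j,\pm}$, then the complementary ``far'' part $\varphi_{\R}$ contains mass at $x_j$-distance less than $\Delta$ from that face, and the support argument in the proof of \cref{lem:SymmetryBound} --- which needs $\supp(\kF|_{[-\Delta;0]}\ast\varphi_\R)\subset K$, i.e.\ $\varphi_\R$ supported at distance at least $\Delta$ from the face --- breaks down; the symmetrization of \cref{lem:FiltersRealSym} was never the obstruction. A correct (if lossier) fix for $m\ge 3$ is a multiplicity bound: every point of $\Omega$ lies in at most $m-1$ of the $\Omega_{j,\pm}$, so $\sum_{j,\pm}\norm{\varphi_{j,\pm}}^2\le(m-1)\norm{\varphi|_{\Omega_{\le\Delta}}}^2$ and the theorem holds with $\Csyma{\tNFa{\Delta}}$ replaced by $(m-1)^{1/2}\Csyma{\tNFa{\Delta}}$.
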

\vspace{.5em}
\begin{proof}
  Let $\varphi \in L^2(\Omega, \mR)$.
  If we define the half-spaces $K_{j,\pm} :=  \mR^{j-1} \times \pm[-\frac 1 2 ; \infty) \times \mR^{m-j}$, then it holds that $\compl K = \bigcup_{j = 1, \pm }^m \compl{K_{j,\pm}}$. Thus, we have
  \begin{align}
   \bnorm{\cD(\varphi)|_{\compl K}}^2 \leq \sum_{j = 1, \pm }^m \bnorm{\cD(\varphi)|_{\compl{K_{j,\pm}}}}^2. \label{eq:thm-LeakageRealSquare-pf1}
  \end{align}
 By construction, each of the squared norms on the right-hand side can be estimated via \cref{lem:SymmetryBound} (with parameters $a = -\frac 1 2$, $\bn = \pm \be_j$), yielding
   \begin{align}
    \bnorm{\cD(\varphi)|_{\compl{K_{j,\pm}}}} \leq  2^{-\frac 1 2} \bnorm{   \hat p ^{\leak}_{\tNF, \tNFa {\Delta} , j }  \cdot \cF(\varphi) } + \Csyma{\tNFa {\Delta}} \bnorm{ \varphi_{j,\pm} }, \label{eq:thm-LeakageRealSquare-pf2}
  \end{align}
  where we have defined $\hat p ^{\leak}_{\tNF, \tNFa {\Delta} , j }(\bxi) := \hat p^{\leak}_{\tNF, \tNFa {\Delta}}(\be_j \cdot \bxi) = \hat p^{\leak}_{\tNF, \tNFa {\Delta}}(-\be_j \cdot \bxi)$ and $\varphi_{j, \pm} := \varphi|_{ \Omega_{j, \pm} }$ with $\Omega_{j, \pm} := \Omega \cap \parens{ \mR^{j-1} \times \pm[-\frac 1 2 ; -\frac 1 2 + \Delta] \times \mR^{m-j} }$. Inserting \eqref{eq:thm-LeakageRealSquare-pf2} into \eqref{eq:thm-LeakageRealSquare-pf1} yields
  \begin{align}
   \bnorm{\cD(\varphi)|_{\compl K}}^2 &\leq   \frac 1 2 \sum_{j = 1, \pm }^m \bnorm{  \hat p ^{\leak}_{\tNF, \tNFa {\Delta} , j } \cdot \cF(\varphi) }^2  + \;  \Csyma{\tNFa {\Delta}}^2 \bbparens{ \sum_{j = 1, \pm }^m  \bnorm{ \varphi_{j,\pm} }^2 } \nnl
   &\quad + 2^{\frac 1 2} \Csyma{\tNFa {\Delta}} \sum_{j = 1, \pm }^m  \bnorm{  \hat p ^{\leak}_{\tNF, \tNFa {\Delta} , j } \cdot \cF(\varphi) }\bnorm{ \varphi_{j,\pm} } . \label{eq:thm-LeakageRealSquare-pf3}  
  \end{align}
  The last summand on the right-hand side of \eqref{eq:thm-LeakageRealSquare-pf3} can be regarded as a euclidean inner product in $\mR^{2m}$. By applying  Cauchy-Schwarz' inequality to this term and using that $ \sum_{j = 1, \pm }^m \norm{  \hat p ^{\leak}_{\tNF, \tNFa {\Delta} , j } \cdot \cF(\varphi) }^2 = 2 \norm{   \hat p ^{\leak}_{\tNF, \tNFa {\Delta}  }  \cdot \cF(\varphi) }^2 $ by \eqref{eq:thm-LeakageSquareSimple-1}, \eqref{eq:thm-LeakageRealSquare-pf3} becomes
  \begin{align}
       \bnorm{\cD(\varphi)|_{\compl K}}^2 &\leq  \bbparens{ \bnorm{   \hat p ^{\leak}_{\tNF, \tNFa {\Delta}  }  \cdot \cF(\varphi) }  +  \Csyma{\tNFa {\Delta}}  \bbparens{ \sum_{j = 1, \pm }^m  \bnorm{ \varphi_{j,\pm} }^2 }^{\frac 1 2} }^2 
  \end{align}

  Now the  choice of  $\Omega$ ensures that the sub-domains $\{ \Omega_{j, \pm} \}$ are mutually disjoint (up to intersections of measure zero). Hence, the $\{ \varphi_{j,\pm} \}$ are mutually $L^2$-orthogonal, which implies 
    \begin{align}
       \bbparens{ \sum_{j = 1, \pm }^m \bnorm{ \varphi_{j,\pm} }^2 }^{\frac 1 2}  &=  \bnorm{ \ssum_{j = 1, \pm }^m  \varphi_{j,\pm} } =  \bnorm{  \varphi|_{(\bigcup_{j,\pm} \Omega_{j,\pm})} }  =  \bnorm{  \varphi|_{\Omega_{\leq \Delta}} } . 
    \end{align}
  \end{proof}
\vspace{.5em}

\vspace{.5em}
\subsection{Stability estimates for spline objects}  \label{SS:RealStabSpline}

The leakage estimates from the preceding section may be used to derive stability estimates for spline objects analogously as in  \secref{SS:SplinesStability}. 

 \vspace{.5em}
\begin{theorem}[Stability estimate for real-valued splines in intervals] \label{thm:StabRealInterval}
 Let $\Omega = K = [-\frac 1 2; \frac 1 2]$, $k \in \mN_0$, $r > 0$, $\nu \geq 1$, $\tNFa r = r^2 \tNF$ and $\Xi  := [-\nu \pi/\tNFa r^{1/2}; \nu \pi/\tNFa r^{1/2}]$. Then it holds that
 \begin{align}
  \Norm{\cD(\varphi)|_{K}} &\geq C_\stab^{\real, \OneD} ( \tNF, \tNFa r, k, \nu) \Norm{ \varphi }   \MTEXT{for all} \varphi \in \BSpacem 1 \cap  L^2(\Omega, \mR)  \label{eq:thm-StabRealInterval-1}
 \end{align}
 where the constant $C_\stab^{\real, \OneD}  ( \tNF, \tNFa r, k, \nu)$ is given by
 \begin{align}
  &C_\stab^{\real, \OneD} ( \tNF, \tNFa r, k, \nu)^2   =   1 - \Bparens{ \Csyma {\tNF/4}  + \Parens{ C_{\low}^2 + \CBand(k, \nu)^2  \Parens{ C_{\tot}^2 - C_{\low}^2 } }^{\frac 1 2} }^2    \label{eq:thm-StabRealInterval-2} \\
  &C_{\low} := \max_{x\in \Xi } \eFa{\tNF/4}(x)   = \max_{\xi\in \tNF^{1/2} \Xi }  \hat p^{\leak}_{\tNF, \tNF/4 } (\xi),  \qquad C_{\tot} := \max_{x\in \mR } \eFa{\tNF/4}(x) = \max_{\xi\in \mR }  \hat p^{\leak}_{\tNF, \tNF/4 } (\xi) \nonumber
 \end{align}
\end{theorem}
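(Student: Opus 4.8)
The plan is to follow the proof of \cref{thm:StabSpline} almost verbatim, substituting the sharper real-valued leakage bound \cref{thm:LeakageRealInterval} for the generic one \cref{thm:LeakageSquareSimple} and then absorbing the extra quasi-symmetric-propagation contribution by a triangle inequality. First I would fix an arbitrary $\varphi \in \BSpacem 1 \cap L^2(\Omega, \mR)$ and put $\Xi_r := [-\pi/r;\pi/r]$, observing that $\tNF^{1/2}\Xi = \nu\Xi_r$ since $\tNFa r = r^2\tNF$. By \cref{thm:LeakageRealInterval},
\begin{align}
 \Norm{\cD(\varphi)|_{\compl K}} \leq \bnorm{ \hat p^{\leak}_{\tNF, \tNF/4} \cdot \cF(\varphi) } + \Csyma{\tNF/4}\norm{\varphi},
\end{align}
with $\hat p^{\leak}_{\tNF, \tNF/4}(\xi) = \eFa{\tNF/4}(\xi/\tNF^{1/2})$; the task reduces to bounding the filtered-norm term using the fact that $\varphi$ is a B-spline.

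For that, I would estimate the filter on the band $\nu\Xi_r$ by $C_\low = \max_{x\in\Xi}\eFa{\tNF/4}(x)$ and globally by $C_\tot = \max_{x\in\mR}\eFa{\tNF/4}(x)$. Exactly as in the chain of inequalities in the proof of \cref{thm:StabSpline}, one splits the Fourier integral defining $\bnorm{\hat p^{\leak}_{\tNF, \tNF/4}\cdot\cF(\varphi)}^2$ at $\partial(\nu\Xi_r)$, subtracts $C_\low^2\norm{\cF(\varphi)}^2$, uses that the remaining weight is $\leq C_\tot^2 - C_\low^2$ outside the band, and then invokes the quasi-band-limitation \cref{thm:SplineQuasiBandlimit} (applicable since $\varphi\in\BSpacem 1$ and $\nu\geq 1$) together with $\norm{\cF(\varphi)} = \norm{\varphi}$ to arrive at
\begin{align}
 \bnorm{ \hat p^{\leak}_{\tNF, \tNF/4} \cdot \cF(\varphi) } \leq \Parens{ C_\low^2 + \CBand(k,\nu)^2\Parens{C_\tot^2 - C_\low^2} }^{1/2}\norm{\varphi}.
\end{align}

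Plugging this back into the leakage bound and using unitarity of $\cD$ in the form $\Norm{\cD(\varphi)|_K}^2 = \norm{\varphi}^2 - \Norm{\cD(\varphi)|_{\compl K}}^2$ then yields exactly the stated inequality with $C_\stab^{\real,\OneD}(\tNF,\tNFa r,k,\nu)$ as in \eqref{eq:thm-StabRealInterval-2}. I expect the one step that is not purely mechanical to be the treatment of the additive $\Csyma{\tNF/4}$-term: in contrast to the complex-valued \cref{thm:StabSpline}, the leakage bound here is no longer a single weighted $L^2$-norm, so the two contributions cannot be merged into one quadratic form and must be combined by the triangle inequality, producing the cross-term responsible both for the square appearing in \eqref{eq:thm-StabRealInterval-2} and for the fact that $C_\stab^{\real,\OneD}$ — and hence the estimate — is non-trivial only when $\Csyma{\tNF/4}$ is small enough and $\nu$, $\tNFa r$ are chosen so that $C_\low$ stays safely below $1$.
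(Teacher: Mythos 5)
Your proposal is correct and follows essentially the same route as the paper: apply \cref{thm:LeakageRealInterval}, bound the filtered term by splitting the Fourier integral at $\partial(\nu\Xi_r)$ and invoking \cref{thm:SplineQuasiBandlimit} exactly as in the proof of \cref{thm:StabSpline}, then combine with the $\Csyma{\tNF/4}$-term by the triangle inequality and conclude via $\Norm{\cD(\varphi)|_{K}}^2 = \norm{\varphi}^2 - \norm{\cD(\varphi)|_{\compl K}}^2$. You also correctly identify the one genuinely new step, namely that the additive quasi-symmetric contribution forces the squared sum in \eqref{eq:thm-StabRealInterval-2}.
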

\vspace{.5em}
\begin{proof}
 The proof is similar to that of \cref{thm:StabSpline}: the setting matches the assumptions of \cref{thm:LeakageRealInterval}. With $\Xi_r := (\tNF^{1/2}/\nu) \cdot \Xi = [-\pi/ r; \pi/ r]$, the leakage bound \eqref{eq:thm-LeakageRealInterval-1} yields
\begin{align}
   \bparens{  \Norm{\cD(\varphi)|_{\compl K}}   &- \Csyma {\tNF/4} \norm{\varphi} }^2-  C_{\low}^2 \Norm{\varphi}^2   \leq \bip{ \cF(\varphi)  }{ \bparens{  \abs{ \hat p ^{\leak}_{\tNF, \tNFa {\Delta}  } }^2 - C_{\low}^2 } \cdot \cF(\varphi) } \nnl
   &\leq \bip{ \cF(\varphi)|_{\compl{(\tNF^{1/2} \Xi )}}  }{ \bparens{ \abs{ \hat p ^{\leak}_{\tNF, \tNFa {\Delta}  } }^2  - C_{\low}^2 } \cdot \cF(\varphi)|_{\compl{(\tNF^{1/2} \Xi )}} } \nnl
   &\leq \bparens{ C_{\tot}^2 - C_{\low}^2 } \Norm{ \cF(\varphi)|_{\compl{(\nu\Xi_r)}} }^2  \leq \CBand(k, \nu)^2  \Parens{ C_{\tot}^2 - C_{\low}^2 } \Norm{\varphi}^2  \label{eq:thm-StabRealInterval-pf1}
   \end{align}
   for any $\varphi \in \BSpacem 1 \cap  L^2(\Omega, \mR) $, where the quasi-band-limitation \cref{thm:SplineQuasiBandlimit} has been applied in the final step. Rearranging \eqref{eq:thm-StabRealInterval-pf1} yields
   \begin{align}
   \Norm{\cD(\varphi)|_{\compl K}} &= \Csyma {\tNF/4} \norm{\varphi}  + \Parens{  \Norm{\cD(\varphi)|_{\compl K}}  - \Csyma {\tNF/4} \norm{\varphi} } \nnl 
   &\leq  \Parens{ \Csyma {\tNF/4}  +  \Parens{ C_{\low}^2 + \CBand(k, \nu)^2   \Parens{ C_{\tot}^2 - C_{\low}^2 } }^{\frac 1 2} } \Norm{\varphi} \nnl
   &= \bparens{ 1 -  C_\stab^{\real, \OneD} ( \tNF, \tNFa r, k, \nu)^2 }^{\frac 1 2}  \Norm{\varphi} \label{eq:thm-StabRealInterval-pf2}
\end{align}
 Since $ \Norm{\cD(\varphi)|_{K}} = ( \Norm{\varphi}^2 - \norm{\cD(\varphi)|_{\compl K}}^2 )^{1/2}$, \eqref{eq:thm-StabRealInterval-pf2} proves the assertion.
 \end{proof}
\vspace{.5em}

Once more, the remarkable aspect of the 1D stability result in \cref{thm:StabRealInterval} is that does not require any distance between the object-domain $\Omega$ and the boundary of $K$.
Analogously, we can obtain a stability estimate for the higher-dimensional case:

 \vspace{.5em}
\begin{theorem}[Stability estimate for real-valued splines in square domains] \label{thm:StabRealSquare}
 Within the setting of \cref{thm:LeakageRealSquare}, let  $\tNFa {\Delta} = \Delta^2 \tNF$, $\tNFa r = r^2 \tNF$ and $\Xi  := [-\nu \pi/\tNFa r^{1/2}; \nu \pi/\tNFa r^{1/2}]$. Then it holds that
 \begin{align}
  \Norm{\cD(\varphi)|_{K}} &\geq C_\stab^{\real,m} ( \tNFa {\Delta}, \tNFa r, k, \nu) \Norm{ \varphi }   \MTEXT{for all} \varphi \in \BSpacem 1 \cap  L^2(\Omega, \mR)  \label{eq:thm-StabRealSquare-1}
 \end{align}
 where the constant $ C_\stab^{\real,m} ( \tNFa {\Delta}, \tNFa r, k, \nu)$ is given by
 \begin{align}
  C_\stab^{\real,m} ( \tNFa {\Delta}, \tNFa r, k, \nu)^2   &=   1 - \! \Bparens{ \Csyma {\tNFa {\Delta}}  + m^{\frac 1 2} \Parens{ C_{\low}^2 + \CBand(k, \nu)^2  \Parens{ C_{\tot}^2 - C_{\low}^2 } }^{\frac 1 2}\! }^2  \label{eq:thm-StabRealSquare-2} \\ 
  C_{\low} &:=  \max_{\xi\in  \Xi  } \eFd (x) , \qquad C_{\tot} :=  \max_{\xi\in \mR} \eFd (x) \nonumber 
 \end{align}
\end{theorem}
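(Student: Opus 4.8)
The proof will follow the same pattern as that of \cref{thm:StabRealInterval}: start from the leakage estimate of \cref{thm:LeakageRealSquare}, control the two terms on its right-hand side via the quasi-band-limitation of B-splines, and then convert the resulting upper bound on the leaked wave-field into a lower bound on the contrast within $K$ by unitarity of $\cD$. So fix $\varphi \in \BSpace \cap L^2(\Omega,\mR)$. By \cref{thm:LeakageRealSquare}, and since restriction to a subset is non-increasing in the $L^2$-norm,
\begin{align}
 \Norm{\cD(\varphi)|_{\compl K}} \leq \bnorm{ \hat p^{\leak}_{\tNF, \tNFa {\Delta}} \cdot \cF(\varphi) } + \Csyma{\tNFa {\Delta}}\bnorm{ \varphi|_{\Omega_{\leq \Delta}} } \leq \bnorm{ \hat p^{\leak}_{\tNF, \tNFa {\Delta}} \cdot \cF(\varphi) } + \Csyma{\tNFa {\Delta}}\norm{\varphi}. \label{eq:planA}
\end{align}
Thus the whole argument reduces to showing $\bnorm{ \hat p^{\leak}_{\tNF, \tNFa {\Delta}} \cdot \cF(\varphi) } \leq m^{1/2}\bparens{ C_{\low}^2 + \CBand(k,\nu)^2(C_{\tot}^2 - C_{\low}^2) }^{1/2}\norm{\varphi}$.

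For this I would use that, by \cref{thm:LeakageSquareSimple}, $\babs{\hat p^{\leak}_{\tNF, \tNFa {\Delta}}(\bxi)}^2 = \sum_{j=1}^m \babs{\eFd(\tNF^{-1/2}\be_j\cdot\bxi)}^2$ is a \emph{sum} of one-dimensional high-pass responses along the coordinate axes, so that by Parseval
\begin{align}
 \bnorm{ \hat p^{\leak}_{\tNF, \tNFa {\Delta}} \cdot \cF(\varphi) }^2 = \sum_{j=1}^m \int_{\mR^m} \babs{\eFd(\tNF^{-1/2}\be_j\cdot\bxi)}^2\, \babs{\cF(\varphi)(\bxi)}^2\, \D\bxi. \label{eq:planB}
\end{align}
For each $j$ I would split the integral over the slab $\nu\Xi_{r,j} = \mR^{j-1}\times[-\nu\pi/r;\nu\pi/r]\times\mR^{m-j}$ (the notation of \cref{thm:SplineQuasiBandlimitND}) and its complement. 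Since $\tNFa r = r^2\tNF$, on $\nu\Xi_{r,j}$ the argument $\tNF^{-1/2}\be_j\cdot\bxi$ lies in $\Xi$, so $\abs{\eFd(\tNF^{-1/2}\be_j\cdot\bxi)}\leq C_{\low}$ there, while $\abs{\eFd(\cdot)}\leq C_{\tot}$ everywhere. Writing $\babs{\eFd}^2 = C_{\low}^2 + (\babs{\eFd}^2 - C_{\low}^2)$ and noting that the bracketed term is $\leq 0$ on $\nu\Xi_{r,j}$ and $\leq C_{\tot}^2 - C_{\low}^2$ globally, the $j$-th term of \eqref{eq:planB} is at most $C_{\low}^2\norm{\cF(\varphi)}^2 + (C_{\tot}^2 - C_{\low}^2)\bnorm{\cF(\varphi)|_{\compl{(\nu\Xi_{r,j})}}}^2$; the slab estimate \eqref{eq:thm-SplineQuasiBandlimitND-1} of \cref{thm:SplineQuasiBandlimitND} then bounds this by $\bparens{ C_{\low}^2 + \CBand(k,\nu)^2(C_{\tot}^2 - C_{\low}^2) }\norm{\varphi}^2$. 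Summing over $j$ in \eqref{eq:planB} gives the required estimate.

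Combining with \eqref{eq:planA} yields $\Norm{\cD(\varphi)|_{\compl K}} \leq \bparens{ \Csyma{\tNFa {\Delta}} + m^{1/2}(C_{\low}^2 + \CBand(k,\nu)^2(C_{\tot}^2 - C_{\low}^2))^{1/2} }\norm{\varphi} = \bparens{1 - C_\stab^{\real,m}(\tNFa {\Delta}, \tNFa r, k, \nu)^2}^{1/2}\norm{\varphi}$, and since $\cD$ is unitary, $\Norm{\cD(\varphi)|_K}^2 = \norm{\varphi}^2 - \Norm{\cD(\varphi)|_{\compl K}}^2 \geq C_\stab^{\real,m}(\tNFa {\Delta}, \tNFa r, k, \nu)^2\norm{\varphi}^2$, which is the assertion. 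Everything here is routine once \cref{thm:LeakageRealSquare,thm:SplineQuasiBandlimitND} are in hand; the two points that need a moment's care, and which I expect to be the main (minor) obstacle, are (i) applying the \emph{slab} version \eqref{eq:thm-SplineQuasiBandlimitND-1} of the quasi-band-limitation rather than the box version \eqref{eq:thm-SplineQuasiBandlimitND-2} — this is forced by the sum-structure of $\babs{\hat p^{\leak}_{\tNF, \tNFa {\Delta}}}^2$ along the coordinate axes — and (ii) performing the $C_{\low}^2$-subtraction \emph{before} invoking band-limitation, so as to recover the sharp prefactor $C_{\tot}^2 - C_{\low}^2$ instead of the cruder $C_{\tot}^2$.
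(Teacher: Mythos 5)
Your proposal is correct and follows essentially the same route as the paper's own proof: reduce to \cref{thm:LeakageRealSquare} with $\norm{\varphi|_{\Omega_{\leq\Delta}}}\leq\norm{\varphi}$, exploit the sum-of-squares structure of $\hat p^{\leak}_{\tNF,\tNFa{\Delta}}$ to treat each coordinate slab separately, subtract $C_{\low}^2$ before invoking the slab version \eqref{eq:thm-SplineQuasiBandlimitND-1} of the quasi-band-limitation, and finish by unitarity of $\cD$. The two "points of care" you flag are exactly the steps the paper takes.
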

\vspace{.5em}
\begin{proof}
 Let $\varphi \in \BSpacem 1 \cap  L^2(\Omega, \mR)$. Since $\norm{ \varphi|_{\Omega_{< d}}} \leq \norm{ \varphi }^2$, we then have by \cref{thm:LeakageRealSquare}:
 \begin{align}
   \Norm{\cD(\varphi)|_{\compl K}} - \Csyma {\tNFa {\Delta}}  \Norm{\varphi}   &\leq \bnorm{ \hat p^{\leak}_{ \tNF, \tNFa {\Delta} } \cdot \cF(\varphi) }  = \bbparens{ \sum_{j = 1}^m  \bip{ \cF(\varphi) }{ \babs{ \hat p^{\leak}_{ \tNF, \tNFa {\Delta}, j }}^2 \cdot \cF(\varphi) } }^{\frac 1 2} \label{eq:thm-StabRealSquare-pf1}
 \end{align}
 with  quasi-1D functions $\hat p^{\leak}_{ \tNF, \tNFa {\Delta}, j }(\bxi) = \hat p^{\leak}_{ \tNF, \tNFa {\Delta} }( \be_j \cdot \bxi )$ as defined in the proof of \cref{thm:LeakageRealSquare}. Let $\Xi_j := \mR^{j-1} \times \Xi \times \mR^{m-j}$ and $\Xi_{r,j} := (\tNF^{1/2}/\nu) \cdot \Xi_j =  \mR^{j-1} \times [- \pi/ r; \pi/ r ] \times \mR^{m-j}$ for $1\leq j \leq m$. Then it holds that $\max_{\bxi \in (\nu\Xi_{r,j})} | \hat p^{\leak}_{ \tNF, \tNFa {\Delta}, j }(\bxi)|  = C_{\low}$ and $\max_{\bxi \in \mR^m} | \hat p^{\leak}_{ \tNF, \tNFa {\Delta}, j }(\bxi)|  = C_{\tot}$ and hence, by a derivation completely analogously as in \eqref{eq:thm-StabRealInterval-pf1},
 \begin{align}
  \bip{ \cF(\varphi) }{ \abs{ \hat p^{\leak}_{ \tNF, \tNFa {\Delta}, j } }^2 \cdot \cF(\varphi) } - C_{\low}^2 &\norm{\varphi}^2 \leq (C_{\tot}^2 - C_{\low}^2)  \bnorm{ \cF(\varphi)|_{\compl{(\nu\Xi_{r,j})}} }^2 \nnl
   &\leq \CBand(k, \nu)^2 (C_{\tot}^2 - C_{\low}^2)  \norm{ \varphi }^2  \label{eq:thm-StabRealSquare-pf2}
 \end{align}
 for all $1 \leq j \leq m$, where \cref{thm:SplineQuasiBandlimitND} has been applied. Bounding the right-hand side of \eqref{eq:thm-StabRealSquare-pf1} via \eqref{eq:thm-StabRealSquare-pf2} and rearranging as in the proof of \cref{thm:StabRealInterval} yields the assertion.
 \end{proof}
\vspace{.5em}

 \begin{figure}[hbt!]
 \centering
 \includegraphics[width=\textwidth]{./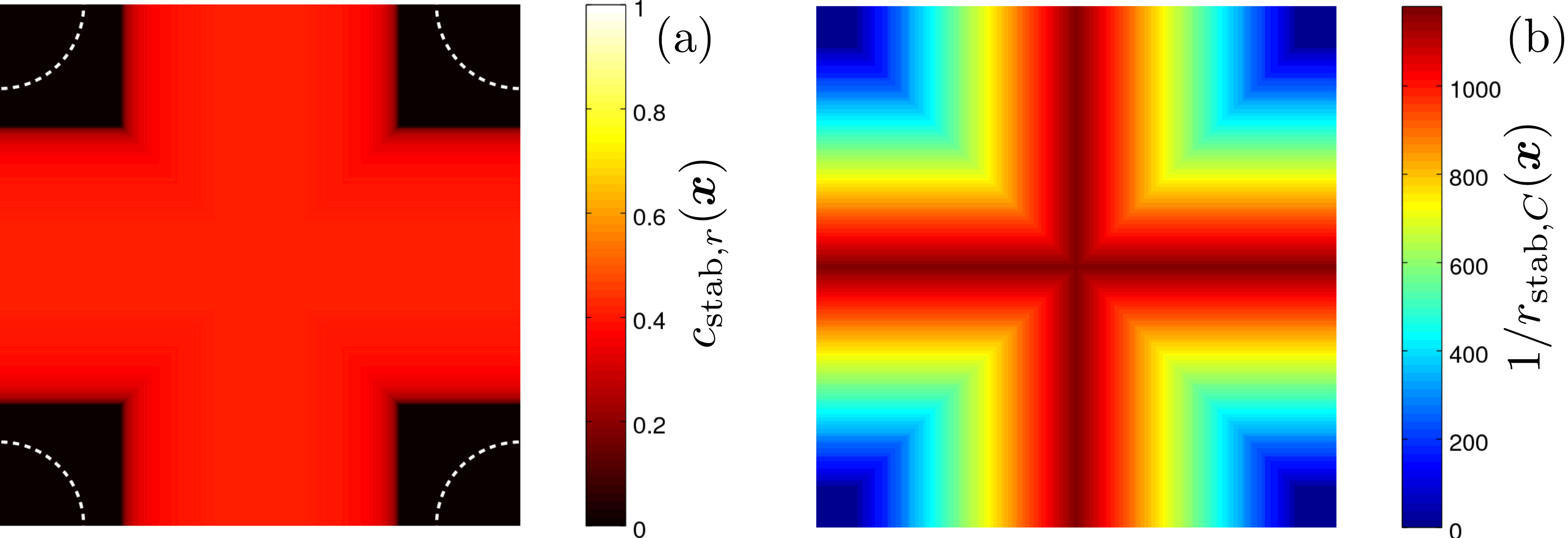}
 \caption{Same plot as \cref{fig:StabilityResolutionComplex}, yet for the \emph{real-valued} setting of \cref{IP:Real}(a). Local stability constant (a) and -resolution (b) have been computed according to \cref{thm:StabRealSquare} via \eqref{eq:LocalStabReal} and \eqref{eq:LocalResoReal}.}
 \label{fig:StabilityResolutionReal}
\end{figure}

\vspace{.5em}
\subsection{Application: resolution estimates} \label{SS:RealResolutionEstimates}

Analogously as for the complex-valued case in \secref{SS:SplinesResolutionEstimates}, we can use \cref{thm:StabRealSquare} to assess the resolution within the \emph{real-valued} setting of \cref{IP:Real}(a).

For illustration, we consider exactly the same setting as in \secref{SS:SplinesResolutionEstimates}, 
but express the local stability constant and resolution via the improved bound \eqref{eq:thm-StabRealInterval-1}, exploiting real-valuedness:
\begin{align}
 c_{\stab, r}(\bx) &:= \sup \big\{C_\stab^{\real,m} ( \tNFa {\Delta}, \tNFa r, k, \nu) : \bx \in {\textstyle \bigcup_{j = 1}^m} S_{d,j} \big\} \label{eq:LocalStabReal} \\
 r_{\stab, C}(\bx) &:= \inf \left\{r > 0 : c_{\stab, r}(\bx) \geq C \right\} \label{eq:LocalResoReal}
\end{align}
with $S_{d,j}$ as defined in \cref{thm:LeakageRealSquare}. $c_{\stab, r}$ and $1/r_{\stab, C}$ are plotted in \cref{fig:StabilityResolutionReal}(a),(b).
 
According to \cref{fig:StabilityResolutionReal}(a), stable reconstruction is guaranteed within the entire FoV except for square-shaped neighborhoods around the corners of $K$. The width of the unstable region is about 1.5 times $\pi / (\tNF r)$ -- the value that is to be expected from the analysis \secref{SSS:GaussianResoEstimatesReal}. Likewise, the local resolutions in \cref{fig:StabilityResolutionReal}(b) are qualitatively in good agreement with the results from the wave-packet-analysis in \secref{SSS:GaussianResoEstimatesReal}, compare \cref{fig:WavepacketResolution}(b).

\vspace{.5em}
\section{Extension to the phaseless case: application to linearized \HIGHLIGHT{XPCI}} \label{S:Phaseless}

So far, the analysis has been limited to the case where the full complex-valued propagated wave field $\cD(h)(\bx)$ -- including the phase -- is measured at each point $\bx\in K$ of the FoV. In the following, we outline how the results can be extended to the case of phaseless data. We   consider the inverse problems \cref{IP:Complex}(b) and \cref{IP:Real}(b) that model image-reconstruction in \HIGHLIGHT{XPCI} within the linear CTF-regime. On the contrary, analyzing the \emph{nonlinear} problems \cref{IP:Complex}(c) and \cref{IP:Real}(c) is beyond reach as stability is an open problem for these even in the case of a full FoV $K = \mR^m$.

\vspace{.5em}
\subsection{Leakage estimates}

As a first step, we aim to bound the amount of data that is leaked outside a square field-of-view within the setting of \cref{IP:Complex}(b) and \cref{IP:Real}(b). This is fairly simple as the measured data, $\sim 2\Re( \cD(h) )$, relates to Fresnel-propagation simply by the \emph{pointwise} real-part  and $|\Re(z)| \leq |z|$ for all $z\in\mC$. This yields the following bound:

\vspace{.5em}
\begin{theorem}[Leakage bound for linearized \HIGHLIGHT{XPCI} data] \label{thm:LeakagePhaseless}
 Let $K \subset \mR^m$ be measurable and $\cT, \cS_\alpha$ be the forward maps from \cref{IP:Complex}(b) and \cref{IP:Real}(b). Then it holds that 
 \begin{align}
   \Norm{ \cT(h)|_{\compl K} } &\leq 2 \Norm{ \cD(h)|_{\compl K} } \MTEXT{for all} h \in L^2(\mR^m),  \label{eq:thm-LeakagePhaseless-1}  \\
   \Norm{ \cS_\alpha(\varphi)|_{\compl K} } &\leq 2 \Norm{ \cD(\varphi)|_{\compl K} } \MTEXT{for all} \varphi \in L^2(\mR^m, \mR).  \label{eq:thm-LeakagePhaseless-2}
 \end{align}
\end{theorem}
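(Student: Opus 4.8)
The plan is to observe that, in the linearized CTF-regime, the forward operators $\cT$ and $\cS_\alpha$ differ from the Fresnel-propagator $\cD$ only by the \emph{pointwise} operation of taking a real part (up to a unit-modulus phase factor), so that the elementary scalar inequality $|\Re(z)| \le |z|$ for $z \in \mC$ transfers directly into an $L^2$-estimate on any fixed measurable set. Since the restriction operator $f \mapsto f|_{\compl K}$ acts pointwise and the $L^2$-norm is monotone under pointwise domination, no spectral or Fourier-side reasoning is needed here.

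Concretely, for \eqref{eq:thm-LeakagePhaseless-1} I would start from the definition $\cT(h) = 2\Re(\cD(h))$ in \eqref{eq:LinModel}, in which $\Re$ acts pointwise, and deduce $|\cT(h)(\bx)| = 2\,|\Re(\cD(h)(\bx))| \le 2\,|\cD(h)(\bx)|$ for every $\bx \in \mR^m$. Squaring, restricting to $\compl K$, and integrating over $\compl K$ then gives $\Norm{\cT(h)|_{\compl K}}^2 = \int_{\compl K} |\cT(h)|^2 \le 4\int_{\compl K}|\cD(h)|^2 = 4\,\Norm{\cD(h)|_{\compl K}}^2$, and taking square roots yields the first estimate. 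This part involves no obstacle at all — it is a one-line pointwise bound followed by monotone integration.

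For \eqref{eq:thm-LeakagePhaseless-2} I would use the identity $\cS_\alpha(\varphi) = \cT(h)$ with $h := -\I\E^{-\I\alpha}\varphi$ recorded in \secref{SSS:MathForwardModels}. Writing $c := -\I\E^{-\I\alpha}$, which has modulus $1$, linearity of $\cD$ gives $\cD(h) = c\,\cD(\varphi)$, hence $|\cD(h)(\bx)| = |\cD(\varphi)(\bx)|$ pointwise and $\Norm{\cD(h)|_{\compl K}} = \Norm{\cD(\varphi)|_{\compl K}}$; applying the already-established bound \eqref{eq:thm-LeakagePhaseless-1} to this $h$ then chains to $\Norm{\cS_\alpha(\varphi)|_{\compl K}} = \Norm{\cT(h)|_{\compl K}} \le 2\,\Norm{\cD(h)|_{\compl K}} = 2\,\Norm{\cD(\varphi)|_{\compl K}}$. (Alternatively one can bypass the first part by noting directly that $\cS_\alpha(\varphi)(\bx) = 2\Re(c\,\cD(\varphi)(\bx))$ and invoking $|\Re(\cdot)| \le |\cdot|$ once more.) The only point needing a moment's care is the bookkeeping of the unit-modulus phase $c$, which is harmless; I expect no genuine difficulty, the factor $2$ simply reflecting the operator bounds $\norm{\cT}, \norm{\cS_\alpha} \le 2$ already noted after \eqref{eq:FresnelPropUnitary}.
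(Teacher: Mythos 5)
Your proposal is correct and matches the paper's own (one-line, unformalized) argument: the paper justifies the theorem exactly by the pointwise bound $|\Re(z)|\leq|z|$ applied to $\cT(h)=2\Re(\cD(h))$, and your reduction of the $\cS_\alpha$ case via $\cS_\alpha(\varphi)=\cT(-\I\E^{-\I\alpha}\varphi)$ together with $|\cD(-\I\E^{-\I\alpha}\varphi)|=|\cD(\varphi)|$ is the intended route. No issues.
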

\vspace{.5em}

The gist of \cref{thm:LeakagePhaseless} is simple: it states that the leaked part of \HIGHLIGHT{XPCI} data, $\cT(h)|_{\compl K}$, cannot contain more information than the corresponding phased Fresnel-data $\cD(h)|_{\compl K}$. Despite its simplicity, however, this result has important consequences:
by  \cref{thm:LeakagePhaseless}, literally \emph{any} of the leakage estimate of the preceding sections induces a bound for the phaseless case.

\vspace{.5em}
\subsection{Stability estimates} 

Using the simple insight from \cref{thm:LeakagePhaseless}, we may derive stability estimates for \HIGHLIGHT{ phase contrast imaging} with finite detectors. To this end, we combine leakage estimates with the stability results for \HIGHLIGHT{XPCI} with \emph{infinite} FoVs from \cref{thm:WellPosed}:

\vspace{.5em}
\begin{theorem}[Stability estimate for linearized \HIGHLIGHT{XPCI} with square detector] \label{thm:StabilityPhaseless}
 Let $K = \SquareDom$ and $\Omega \subset \SquareDomMin \Delta $ for some $0 < \Delta < \frac 1 2$. Let $T \in \{ \cT, \cS_\alpha \}$ and $h \in L^2(\Omega)$, where $h$ is assumed to be real-valued if $T= \cS_\alpha$. Furthermore, let $C_{\textup{stab}}^{\textup{IP$\ast$}}(\Omega,\tNF,(\alpha))$ denote the stability constant of $T$ for a full FoV from \cref{thm:WellPosed}.
 Then it holds that
 \begin{align}
   \Norm{ T(h)|_{K} }^2  &\geq \Norm{ T(h)  }^2  - 4 \bnorm{ \hat p^{\leak}_{\tNF, \tNFa {\Delta}}  \cdot \cF(h)}^2  \label{eq:thm-StabilityPhaseless-1} \\
   &\geq C_{\textup{stab}}^{\textup{IP$\ast$}}(\Omega,\tNF,(\alpha))^2 \bnorm{ h  }^2 - 4 \bnorm{ \hat p^{\leak}_{\tNF, \tNFa {\Delta}}  \cdot \cF(h)}^2. \label{eq:thm-StabilityPhaseless-2} 
 \end{align}
 If $h \in \BSpace$ is moreover a B-spline and $\nu \geq 1$, then \eqref{eq:thm-StabilityPhaseless-2} further implies that
  \begin{align}
   \Norm{ T(h)|_{K} }^2  
   &\geq  \Parens{ C_{\textup{stab}}^{\textup{IP$\ast$}}(\Omega,\tNF,(\alpha))^2 - 4 \bparens{ 1 - \CStab  (\tNFa {\Delta}, \tNFa r, k, \nu )^{2m} }  } \Norm{ h  }^2  \label{eq:thm-StabilityPhaseless-4}
 \end{align}
 where the notation is as in \cref{thm:StabSpline}.
\end{theorem}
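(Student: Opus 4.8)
The plan is to combine the Pythagorean splitting of the on-detector norm with the elementary phaseless leakage bound of \cref{thm:LeakagePhaseless}, and then feed in the locality and stability results of the earlier sections. Since $K$ and $\compl K$ partition $\mR^m$ and $T(h)\in L^2(\mR^m)$ (for $\cT(h)=2\Re(\cD(h))$ and for $\cS_\alpha$ this is clear from $\Norm{T(h)}\le 2\norm{h}$, which follows from the isometry \eqref{eq:FresnelPropUnitary}), we have $\Norm{T(h)|_K}^2=\Norm{T(h)}^2-\Norm{T(h)|_{\compl K}}^2$. By \cref{thm:LeakagePhaseless} -- using \eqref{eq:thm-LeakagePhaseless-1} when $T=\cT$ and \eqref{eq:thm-LeakagePhaseless-2} when $T=\cS_\alpha$ -- the leaked phaseless data never exceeds twice the leaked complex field, $\Norm{T(h)|_{\compl K}}\le 2\Norm{\cD(h)|_{\compl K}}$. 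Everything then reduces to bounding $\Norm{\cD(h)|_{\compl K}}$, for which the machinery of \secref{S:BoundsComplex} and \secref{S:Splines} is available because $\supp(h)\subset\Omega\subset\SquareDomMin\Delta$.

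For \eqref{eq:thm-StabilityPhaseless-1}: \cref{thm:LeakageSquareSimple} gives $\Norm{\cD(h)|_{\compl K}}\le\bnorm{\hat p^{\leak}_{\tNF,\tNFa{\Delta}}\cdot\cF(h)}$; squaring and substituting into the Pythagorean identity yields the first inequality. For \eqref{eq:thm-StabilityPhaseless-2} I simply insert the infinite-FoV stability bound $\Norm{T(h)}\ge C_{\textup{stab}}^{\textup{IP$\ast$}}(\Omega,\tNF,(\alpha))\norm{h}$ from \cref{thm:WellPosed}, which applies since $\Omega$ is bounded and $T$ is exactly the forward operator of \cref{IP:Complex}(b), resp.\ \cref{IP:Real}(b).

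For the B-spline estimate \eqref{eq:thm-StabilityPhaseless-4} the right move is \emph{not} to try to bound the term $\bnorm{\hat p^{\leak}_{\tNF,\tNFa{\Delta}}\cdot\cF(h)}^2$ appearing in \eqref{eq:thm-StabilityPhaseless-2} -- in dimension $m>1$ that quantity can in fact exceed $(1-\CStab(\tNFa{\Delta},\tNFa r,k,\nu)^{2m})\norm{h}^2$, because the half-space decomposition of $\compl K$ underlying \cref{thm:LeakageSquareSimple} over-counts the corners -- but rather to re-route the leakage bound through \cref{thm:StabSpline}. That theorem gives $\Norm{\cD(h)|_K}\ge\CStab(\tNFa{\Delta},\tNFa r,k,\nu)^m\norm{h}$ for $h\in\BSpace\cap L^2(\Omega)$ (real splines are a special case, which covers the $\cS_\alpha$ setting); combining this with the isometry \eqref{eq:FresnelPropUnitary} and the orthogonal splitting $\norm{h}^2=\Norm{\cD(h)}^2=\Norm{\cD(h)|_K}^2+\Norm{\cD(h)|_{\compl K}}^2$ yields $\Norm{\cD(h)|_{\compl K}}^2\le(1-\CStab(\tNFa{\Delta},\tNFa r,k,\nu)^{2m})\norm{h}^2$. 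Passing this through the factor-$2$ bound of \cref{thm:LeakagePhaseless} and combining with \cref{thm:WellPosed} in the Pythagorean identity gives \eqref{eq:thm-StabilityPhaseless-4}. None of the steps is technically deep; the main obstacle is precisely this last one -- recognizing that the sharper $m>1$ constant must come from the separable recursion inside \cref{thm:StabSpline} rather than from the filter response, so that the phrase ``\eqref{eq:thm-StabilityPhaseless-2} further implies'' has to be read in that slightly indirect sense.
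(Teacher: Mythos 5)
Your proposal is correct and follows essentially the same route as the paper: Pythagorean splitting of $\Norm{T(h)}^2$, the factor-$2$ phaseless bound of \cref{thm:LeakagePhaseless} combined with \cref{thm:LeakageSquareSimple} for \eqref{eq:thm-StabilityPhaseless-1}, \cref{thm:WellPosed} for \eqref{eq:thm-StabilityPhaseless-2}, and the re-routing of the spline case through $\norm{\cD(h)|_{\compl K}}^2 = \norm{h}^2 - \norm{\cD(h)|_K}^2$ and \cref{thm:StabSpline} for \eqref{eq:thm-StabilityPhaseless-4}. Your observation that the last bound must bypass the filter term of \eqref{eq:thm-StabilityPhaseless-2} is exactly what the paper does (``estimated via \cref{thm:StabSpline} instead of \cref{thm:LeakageSquareSimple}'').
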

\vspace{.5em}
\begin{proof}
 The first inequality, \eqref{eq:thm-StabilityPhaseless-1}, is obtained by bounding $\Norm{ T(h)|_{\compl K} }$ via \cref{thm:LeakagePhaseless,thm:LeakageSquareSimple} and using that $\Norm{ T(h)|_{K} }^2 = \Norm{ T(h) }^2 - \Norm{ T(h)|_{\compl K} }^2 $. \eqref{eq:thm-StabilityPhaseless-2} then follows from \eqref{eq:thm-StabilityPhaseless-1} by estimating $\Norm{ T(h) }$ via \cref{thm:WellPosed}. The bound
 \eqref{eq:thm-StabilityPhaseless-4} is obtained analogously if $\norm{ \cD(h)|_{\compl K} }^2 =  \norm{ h }^2 - \norm{ \cD(h)|_{K} }^2$ is estimated via \cref{thm:StabSpline} instead of \cref{thm:LeakageSquareSimple}.
\end{proof}
\vspace{.5em}

While the right-hand side of \eqref{eq:thm-StabilityPhaseless-4} is clearly the simplest of all bounds in  \cref{thm:StabilityPhaseless}, it is also the most pessimistic. The reason is that both the full-FoV-contrast $\norm{T(h)}$ \emph{and} the leaked part $\norm{T(h)|_{\compl K} }$ are bounded via \emph{worst-case} estimates. Hence, the bound \eqref{eq:thm-StabilityPhaseless-4} is likely to be far from sharp since, otherwise, some $h \in \BSpace \cap L^2(\Omega)$ would have to both  {minimize}  $\norm{T(h)}$ and  {maximize}  $\norm{T(h)|_{\compl K} } $. However, as shown in \cite{MaretzkeHohage2017SIAM},  $\norm{T(h)}$ is minimized by \emph{low-frequency} modes, whereas the leakage estimates are in terms of \emph{high-pass} filters.

Despite its lossiness, we demonstrate that the bound \eqref{eq:thm-StabilityPhaseless-4} may indeed guarantee stability in practically relevant settings. To this end, the required stability constant for an infinite FoV $C_{\textup{stab}}^{\textup{IP}\ast}$ is approximated numerically, which can be done to high accuracy for ball- or square-domains $\Omega$.
Let us first consider \cref{IP:Complex}(b). This problem is excessively ill-conditioned \cite{MaretzkeHohage2017SIAM} even for a full FoV, except for settings with very small object-domains. For such a case, we show that stability also holds with finite detectors:

\vspace{.5em}
\begin{example}[Stability estimate for \HIGHLIGHT{XPCI} of weak objects (\cref{IP:Complex}(b))] \label{ex:1}
 Let $\tNF = 2\cdot 10^3$ and $K = [-\frac 1 2 ; \frac 1 2]^2$. Let  $h \in \BSpace \cap L^2(\Omega)$ with support  $\Omega = [-\frac 1 {20}; \frac 1 {20}]^2$, resolution $1/r = 190$ and spline order $k = 7$. Then the bound \eqref{eq:thm-StabilityPhaseless-4} guarantees stability with
 \begin{align}
  \Norm{ \cT(h)|_{K} } \geq 0.12  \norm{h} \quad (C_{\textup{stab}}^{\textup{IP1(b)}}(\Omega,\tNF) \geq 0.328, \; \CStab  (\tNFa {\Delta}, \tNFa r, k, 1.2 ) \geq 0.988).
 \end{align}
 By \cref{res:ComplexReso}, 
 an \emph{upper} bound for the resolution is given by $1/r \lesssim 0.45\tNF/\pi \approx 290$.
\end{example}
\vspace{.5em}
\noindent Unfortunately, as $C_{\textup{stab}}^{\textup{IP1(b)}}(\Omega,\tNF)$ decays exponentially with the Fresnel number associated with the size of $\Omega$ \cite{MaretzkeHohage2017SIAM}, stability cannot be guaranteed for larger object-domains $\Omega$ or $\tNF$.

The situation is better for \cref{IP:Real}(b), i.e.\ for the reconstruction of \emph{homogeneous objects} as introduced in \secref{SSS:MathForwardModels}. Of particular  relevance are non-absorbing, pure  {phase objects}: 


\vspace{.5em}
\begin{example}[Stability estimate for \HIGHLIGHT{XPCI} of weak phase objects (\cref{IP:Real}(b): $\alpha = 0$)]  \label{ex:2}
 Let $\tNF = 5\cdot 10^3$ and $K = [-\frac 1 2 ; \frac 1 2]^2$. Let  $\varphi \in \BSpace \cap L^2(\Omega, \mR)$ with   $\Omega = \{ \bx \in \mR^2: |\bx| \leq \frac 1 {10} \}$, resolution $1/r = 350$ and $k = 7$. Then the bound \eqref{eq:thm-StabilityPhaseless-4} guarantees stability with 
 \begin{align}
  \Norm{ \cS_0 ( \varphi ) |_{K} } \geq 0.05 \norm{\varphi} \quad (C_{\textup{stab}}^{\textup{IP2(b)}}(\Omega,\tNF, 0) \geq 0.151, \; \CStab  (\tNFa {\Delta}, \tNFa r, k, 1.25 ) \geq 0.997).
 \end{align}
  By \cref{res:RealReso}, 
 an \emph{upper} bound for the resolution is given by $1/r \lesssim 0.5\tNF/\pi \approx 800$.
\end{example}
\vspace{.5em}

Yet, the full-FoV stability constant $C_{\textup{stab}}^{\textup{IP2(b)}}(\Omega,\tNF, 0)$ decays like $\tNF^{-1}$ for $\tNF\to\infty$, which is still too fast for \eqref{eq:thm-StabilityPhaseless-4} to guarantee stability at larger Fresnel numbers. This is different when the imaged sample is also known to be slightly absorbing,
in which case the asymptotics improve to $C_{\textup{stab}}^{\textup{IP2(b)}}(\Omega,\tNF, \alpha) \gtrsim \tNF^{-1/2}$ \cite{MaretzkeHohage2017SIAM}. This enables stability guarantees for reconstructions at \emph{optical} resolutions as fine as the native resolution of typical detectors. In such a setting, the finite FoV is no longer a limiting factor for the performance of the imaging setup. We consider an example for a sample satisfying $\mu = 0.1\phi$, i.e.\ for $10\,\%$ absorption (see \secref{SSS:MathForwardModels}):

\vspace{.5em}
\begin{example}[Stability estimate for \HIGHLIGHT{XPCI} of  homogeneous objects (\cref{IP:Real}(b): $\alpha = \arctan(\frac 1 {10})$)]  \label{ex:3}
 Let $\tNF = 4\cdot 10^4$ and $K = [-\frac 1 2 ; \frac 1 2]^2$.  Let  $\varphi \in \BSpace \cap L^2(\Omega, \mR)$ with  $\Omega = \{ \bx \in \mR^2: |\bx| \leq \frac 1 {4} \}$, resolution $1/r = 2000$ and $k = 7$. Then the bound \eqref{eq:thm-StabilityPhaseless-4} guarantees stability with 
 \begin{align}
  \Norm{ \cS_\alpha ( \varphi ) |_{K} } \geq 0.08 \norm{\varphi} \quad (C_{\textup{stab}}^{\textup{IP2(b)}}(\Omega,\tNF, \alpha) \geq 0.147, \; \CStab  (\tNFa {\Delta}, \tNFa r, k, 1.25 ) \geq 0.998).
 \end{align}
    By \cref{res:RealReso}, an \emph{upper} bound for the resolution is given by $1/r \lesssim 2^{-3/2}\tNF/\pi \approx 4500$.
\end{example}
\vspace{.5em}


\vspace{.5em}
\subsection{Improved estimates for real-valued objects}

In principle, the improved leakage bounds for the real-valued setting from \secref{S:Real} apply to the CTF-based reconstruction of homogeneous objects, \cref{IP:Real}(b). Unfortunately, the derived bounds are too pessimistic in this setting to enable stability estimates for practically relevant Fresnel numbers. However, note that numerical simulations (not shown) indicate that the larger stability regions for the real-valued case, shown in \cref{fig:WavepacketResolution,fig:StabilityResolutionReal}, indeed seem to carry over to the phaseless \HIGHLIGHT{XPCI}-setting.

\vspace{.5em}
\section{Conclusions} \label{S:Conclusions}

We have studied locality of wave-propagation in the Fresnel- (or paraxial) regime in order to quantify the effects of a finite detector on the stability of X-ray \HIGHLIGHT{ phase contrast imaging} (\HIGHLIGHT{XPCI}). The analysis shows that locality depends on spatial frequencies, i.e.\ the finer the features of some object $h$ the more delocalized it is upon Fresnel-propagation $h \mapsto \cD(h)$. As a consequence, \emph{truncated} diffraction-data, as measured by any real-world detector, introduces a spatially varying resolution limit within the field-of-view: features of the imaged object finer than some limiting length-scale $r_{\stab}$ may induce a signal in the diffraction-pattern that essentially \emph{leaks} out the detection-domain $K $ upon propagation and thus
cannot be stably reconstructed from the data. On the contrary, Lipschitz-stability estimates hold for images that comply with the resolution limit, as has been proven for multi-variate B-splines. \HIGHLIGHT{The decisive property of B-splines for this result is that they are \emph{quasi band-limited} functions. Notably, the obtained estimates on their concentration in Fourier-space (\cref{thm:SplineQuasiBandlimit,thm:SplineQuasiBandlimitND}) may be of interest beyond the specific inverse problems considered this work.}

The stability results do not only hold for the (hypothetical) case where full complex-valued Fresnel-data $\cD(h)|_K$ is measured, but have also been extended to the \emph{phaseless} setting of \HIGHLIGHT{XPCI} in the linear CTF-regime. However, as the (possibly complicated) interplay between the instabilities due to a finite FoV  and those due to the missing phase  is not taken into account, the derived estimates for the phaseless case are expected to be highly non-optimal.

The maximum resolution for a square detector is found to be $1/ r_{\stab} \approx \F$, \HIGHLIGHT{in accordance with the numerical aperture of the lensless imaging setup \cite{Nugent2010coherent,LatychenskaiaEtAl2012_HoloMeetsCDI(withHoloResolution)},} where $\F = b^2 /(\lambda d)$ is the  Fresnel number associated with the detector's aspect-length~$b$ ($\lambda$: wavelength, $d$: propagation-distance). Hence, if $\F$ is smaller than the number of detector-pixels along one dimension, the finite FoV bottlenecks the achievable resolution. For complex-valued images to be reconstructed, the optimal resolution is moreover attained only in the very center of the FoV. Interestingly, this situation is much worse than for the standard \HIGHLIGHT{XPCI} case of homogeneous objects, that boils down to reconstructing a \emph{real-valued} image. In the latter case, maximum resolution $\approx\! \F$ can be achieved in large parts of the FoV, except for the detector-corners. 

The analysis of this work may be readily extended. For once, all results can be adjusted to non-square object- and detection-domains at the cost of a more involved notation. Moreover, it is straightforward to extend the derived locality-bounds to multiple diffraction-patterns acquired at different Fresnel numbers $\tNF_1, \tNF_2, \ldots$, which is a typical setting in \HIGHLIGHT{XPCI}. However, the larger amount of data is not too useful in view of a finite detector because, according to this work's analysis, features that leak outside the FoV for the largest Fresnel number are lost in \emph{all} diffraction patterns.
Finally, the estimates obtained within the Fresnel-regime may be generalized to propagation within the full Helmholtz equation, by combining them with bounds on the deviation from the paraxial limit. Thereby, the results might be applied to a large range of scattering experiments that give rise to \emph{approximately} paraxial wave-fields.

\vspace{.5em}
 \section*{Acknowlegdments}

The author thanks Johannes Hagemann, Malte Vassholz, Thorsten Hohage and Tim Salditt for inspiring discussions.
Financial support by Deutsche Forschungsgemeinschaft DFG through SFB 755 - Nanoscale
Photonic Imaging is gratefully acknowledged.

%
%

\vspace{1em}
\appendix

\section{Fresnel-propagation and frequency shifts} \label{App:FresnelPropFreqShift}

\begin{proof}[Proof of \cref{lem:FresnelPropFreqShift}]
 By the alternate form of the Fresnel propagator \eqref{eq:FresnelPropAltForm}, we have
 \begin{align}
   \E^{\I m \pi /4} \tNF ^{-\frac m 2 }  \cD( \be_{\ba} \cdot f )  = \nF \cdot \cF( \nF \cdot \be_{\ba} \cdot f ) ( \tNF (\cdot)). \label{eq:lem-FresnelPropFreqShift-1} 
 \end{align}
 Moreover, it holds  for all $\bx \in \mR^m$
 \begin{align} 
  \nF \cdot \be_{\ba} (\bx) &= \exp\left( \I \left( \frac{ \tNF \bx^2 } 2  + \ba \cdot \bx \right) \right) = \exp\left(  - \frac{ \I    \ba^2  }{ 2 \tNF }   \right) \exp\left( \frac{ \I    \tNF (\bx + \ba / \tNF)^2  } 2   \right) \nnl
  &= \mF(\ba)  \cdot \nF \left( \bx + \ba / \tNF \right) = \mF(\ba)  \cdot T_{\ba/\tNF}( \nF )(\bx), \label{eq:lem-FresnelPropFreqShift-2} 
 \end{align}
 Since $(T_{\bt})^{-1} = T_{-\bt}$ and $\cF\big( T_{\bt} (g) \big) = \be_{\bt} \cdot \cF (g)$ for any $\bt \in \mR^m$, $g \in L^2(\mR^m)$, we thus have
 \begin{align}
 \nF \cdot \cF( \nF \cdot \be_{\ba} \cdot f ) ( \tNF (\cdot)) &=  \mF(\ba) \cdot \nF   \cdot \cF \left( T_{\ba/\tNF}(\nF) \cdot f \right) ( \tNF (\cdot)) \nnl
  &=  \mF(\ba) \cdot \nF   \cdot \cF \left( T_{\ba/\tNF}\left( \nF \cdot T_{-\ba/\tNF}(f) \right) \right) ( \tNF (\cdot)) \nnl
  &=  \mF(\ba) \cdot \nF   \cdot \be_{\ba/\tNF}  ( \tNF (\cdot)) \cdot  \cF \left( \nF \cdot T_{-\ba/\tNF}(f)  \right) ( \tNF (\cdot))  \nnl
  &=    \mF(\ba) \cdot \be_{\ba}  \cdot \left( \nF \cdot  \cF \left( \nF \cdot T_{-\ba/\tNF}(f)  \right) ( \tNF (\cdot)) \right)  \nnl
  &=    \mF(\ba) \cdot \be_{\ba}  \cdot \E^{\I m \pi /4} \tNF ^{-\frac m 2 } \cD \left( T_{-\ba/\tNF}(f)  \right) \label{eq:lem-FresnelPropFreqShift-3} 
 \end{align}
 By comparing to \eqref{eq:lem-FresnelPropFreqShift-1} and exploiting that $\cD$ commutes with translations as a convolution operator, we finally obtain 
 \begin{align*}
 \cD( \be_{\ba} \cdot f ) &=    \mF(\ba) \cdot \be_{\ba}  \cdot \cD \left( T_{-\ba/\tNF}(f)  \right)   =    \mF(\ba) \cdot \be_{\ba}  \cdot T_{-\ba/\tNF}\left( \cD  (f)  \right).  
 \end{align*}
\end{proof}
\vspace{1em}

\section{Quasi-band-limitation of B-splines} \label{App:SplineQuasiBandlimit}

\begin{proof}[Proof of \cref{thm:SplineQuasiBandlimit}]
 We prove the estimate \eqref{eq:thm-SplineQuasiBandlimit-1} for $h = \sum_{j \in \mZ} b_j B_{k}(\cdot /r - j - \bo ) \in \BSpacem 1$ with coefficients that vanish for all but finitely many entries, i.e.\ $(b_j) \in \ell^0(\mZ) := \{ (c_j)_{j \in \mZ} \subset \mC : \exists J \subset \mZ \textup{ finite s.t.\ } c_l = 0 \textup{ for } l \in \mZ \setminus J $. This is sufficient since such splines form an $L^2$-dense subspace of $\BSpacem 1$ (by denseness of $\ell^0(\mZ)$ in $ \ell^2(\mZ)$ and the Riesz-sequence property \eqref{eq:SplineRieszSeq}) and both sides of  \eqref{eq:thm-SplineQuasiBandlimit-1} are $L^2$-continuous in $h$.
 
 For the considered $h$, all sums of the form $\sum_{j\in\mZ} b_j (\ldots)$ are finite. By linearity and the behavior of the Fourier-transform under translations and dilations, this implies that
 \begin{align}
  \cF(h)(\xi)  &= \cF \bbparens {\sum_{j \in \mZ } b_{j} B_k(\cdot /r - j - \bo ) }(\xi)  = \sum_{j \in \mZ } b_{j} \cF \Parens { B_k(\cdot /r - j - \bo ) }(\xi) \nnl
  &= \underbrace{ \bbparens{ \exp \Parens{ - \I r \xi  \bo }  \sum_{j \in \mZ} b_{j} \exp \Parens{ - \I r \xi  j }   } }_{=: \hat h_{\textup{per}}(r\xi)}  r \cF(B_k)(r\xi) \MTEXT{for all} \xi \in \mR. \label{eq:thm-SplineQuasiBandlimit-pf1} 
 \end{align}
 From \eqref{eq:thm-SplineQuasiBandlimit-pf1}, it can be readily seen that the function $\hat h_{\textup{per}}$ is $2\pi$-periodic, i.e.\ $\hat h_{\textup{per}}(\xi + 2\pi l) = \hat h_{\textup{per}}(\xi)$ for all $\xi \in \mR, l \in \mZ$.
 
 In order to prove the estimate \eqref{eq:thm-SplineQuasiBandlimit-1}, we decompose the Fourier-domain: with $\bar \nu := 1+ 2 \ceil{(\nu-1)/2}$ as defined in the assumptions, it holds that
 \begin{align}
  \compl{(\nu \Xi_ r)} = \bparens{ (\bar \nu \Xi_ r) \setminus (\nu \Xi_ r) } \cup \bigcup_{n = 1 +  \ceil{(\nu-1)/2}}^\infty \Parens{ \Xi_ r + \frac{2\pi } r n} \cup  \Parens{ \Xi_ r - \frac{2\pi} r  n}  , 
 \end{align}
 where the union is mutually disjoint except for intersections of Lebesgue-measure zero. Accordingly, the squared $L^2$-norm over $\compl{(\nu \Xi_ r)}$ can be written as a sum
  \begin{align}
  \Norm{ \cF(h) |_{\compl{(\nu \Xi_ r)}}}^2 &= \Norm{ \cF(h) |_{(\bar \nu \Xi_ r) \setminus (\nu \Xi_ r)}}^2  + \sum_{n = 1 +  \ceil{(\nu-1)/2} }^\infty  \Parens{ \bnorm{\cF(h)|_{\Xi_ r + \frac{ 2 \pi} r n }}^2 + \bnorm{\cF(h)|_{\Xi_ r - \frac{2 \pi} r n }}^2   }   \label{eq:thm-SplineQuasiBandlimit-pf2} 
 \end{align}
 
 We first consider the squared norms in the second summand on the right-hand-side of \eqref{eq:thm-SplineQuasiBandlimit-pf2}. By the $2\pi$-periodicity of $\hat h_{\textup{per}}$, we have
 \begin{align}
  r^{-1} \bnorm{\cF(h)|_{\Xi_ r + \frac{ 2\pi} r l }}^2 \! &= r \! \int_{\Xi_ r + \frac{2\pi} r l} \Abs{ \hat h_{\textup{per}}(r\xi) }^2 \! \Abs{ \cF(B_k)(r\xi)}^2 \D \xi = \! \int_{(2\pi l-1)\pi}^{(2\pi l+1)\pi}  \! \Abs{ \hat h_{\textup{per}}( \xi) }^2 \! \Abs{ \cF(B_k)( \xi)}^2 \D \xi \nnl
  &=  \int_{-\pi}^{\pi}  \Abs{ \hat h_{\textup{per}}( \xi) }^2 \Abs{ \cF(B_k)( \xi + 2\pi l)}^2 \D \xi \label{eq:thm-SplineQuasiBandlimit-pf3} 
 \end{align}
for all $l \in \mZ$. Hence, we obtain for all $n \in \mN$
   \begin{align}
    \!\bnorm{\cF(h)|_{(\Xi_ r + \frac{ 2 \pi n} r ) \cup (\Xi_ r - \frac{ 2 \pi n} r )}}^2 &=  r\! \int_{-\pi}^\pi \! \Babs{ \hat h_{\textup{per}}(\xi)  }^2 \! \Parens{ \Abs{ \cF(B_k)( \xi + 2\pi n)}^2 \! + \! \Abs{ \cF(B_k)( \xi - 2\pi n)}^2 } \D \xi  \nnl 
    &\leq r c_{k, n}  \int_{-\pi}^\pi \Babs{ \hat h_{\textup{per}}(\xi)  }^2   \Abs{ \cF(B_k)( \xi )}^2\, \D \xi =  c_{k, n} \Norm{\cF(h)|_{\Xi_ r  }}^2   \label{eq:thm-SplineQuasiBandlimit-pf4}  \\ 
    c_{k, n} := \sup_{\xi \in [-\pi;\pi]} w_{k, n}(\xi), &\qquad w_{k, n}(\xi) :=  \frac{ \Abs{ \cF(B_k)( \xi + 2\pi n)}^2 + \Abs{ \cF(B_k)( \xi - 2\pi n)}^2 }{\Abs{ \cF(B_k)( \xi )}^2}. \nonumber %
  \end{align}
  
 We aim to explicitly compute the coefficients $c_{k, n}$. To this end, we use the known Fourier transform of $B_k$, $\cF(B_k)(\xi) = (2\pi)^{-1/2} \sinc(\xi/2)^{k+1}$ for all $\xi \in \mR$ where $\sinc(x) := \sin(x)/x$. 
 As the function $\sin^2$ is $\pi$-periodic, it holds that
  \begin{align}
  \frac{ \Abs{ \cF(B_k)( \xi + 2\pi l)}^2 }{ \Abs{ \cF(B_k)( \xi ) }^2 } &=  \frac{ \sin(\xi/2 + \pi l)^{2(k+1)}}{\sin(\xi/2)^{2(k+1)}}\cdot \frac{(\xi/2)^{2(k+1)}}{ (\xi/2 + \pi l)^{2(k+1)}} = \frac{\xi^{2(k+1)}}{ (\xi + 2\pi l)^{2(k+1)}}  \label{eq:thm-SplineQuasiBandlimit-pf5}  
 \end{align}
 for all $\xi\in [-\pi; \pi]$, $l \in \mZ$. 
  Accordingly, the weight-function $w_{k, n}$ is given by
  \begin{align}
   w_{k, n}(\xi) &= \frac{\xi^{2(k+1)}}{ (\xi + 2\pi n)^{2(k+1)}}  + \frac{\xi^{2(k+1)}}{ (\xi - 2\pi n)^{2(k+1)}}  \nnl
   \Rightarrow \; c_{k, n} &= \sup_{\xi \in [-\pi;\pi]} w_{k, n}(\xi) = w_{k, n}(\pm \pi) = \frac 1 {(2n-1)^{2(k+1)}} + \frac 1 {(2n+1)^{2(k+1)}}, \label{eq:thm-SplineQuasiBandlimit-pf6}  
  \end{align}
  for all $n \in \mN$, where the second line follows from the fact that $w_{k, n}: [-\pi; \pi] \to \mR$ is even and attains its maximum at the boundary as a convex function.
 
 Now it remains to bound the first term on the right-hand side of \eqref{eq:thm-SplineQuasiBandlimit-pf2}. By definition, it holds that $\bar \nu \geq \nu$, where equality holds if and only if $\nu \in 2\mN - 1$, in which case the considered term vanishes. Hence, we restrict to $\bar \nu > \nu$.  By transforming the integration variable and exploiting periodicity analogously as in \eqref{eq:thm-SplineQuasiBandlimit-pf3}, we obtain
 \begin{align}
    r^{-1}  \Norm{\cF(h)|_{(\bar\nu \Xi_ r) \setminus (\nu \Xi_r) }}^2 &= \bbparens{  \int_{-\bar \nu \pi}^{-\nu\pi} + \int_{\nu\pi}^{\bar \nu \pi}  }   \Abs{ \hat h_{\textup{per}}( \xi) }^2 \Abs{ \cF(B_k)( \xi)}^2 \D \xi \nnl
    &=  \int_{-\pi}^{\tilde \nu \pi}  \Abs{ \hat h_{\textup{per}}( \xi -(\bar \nu -1)\pi) }^2 \Abs{ \cF(B_k)( \xi)}^2 \D \xi \nnl
    &\qquad + \int_{- \tilde \nu \pi}^\pi  \Abs{ \hat h_{\textup{per}}( \xi) }^2 \Abs{ \cF(B_k)( \xi+ (\bar \nu -1)\pi))}^2 \D \xi \nnl
    &\leq  c_{k,0} r^{-1} \Norm{\cF(h)|_{\Xi_ r }}^2, \qquad c_{k,0} := \!\sup_{\xi \in [-\pi; \pi]} w_{k, 0}(\xi) \label{eq:thm-SplineQuasiBandlimit-pf7} \\ 
   w_{k, 0}(\xi) &:=  \begin{cases}
                                                                                   \frac{ \Abs{ \cF(B_k)( \xi - (\bar \nu -1)\pi))}^2 }{ \Abs{ \cF(B_k)( \xi)}^2 }  &\textup{for } \xi \leq - |\tilde \nu| \pi \\
                                                                                   \frac{ \Abs{ \cF(B_k)( \xi - (\bar \nu -1)\pi))}^2 }{ \Abs{ \cF(B_k)( \xi)}^2 } + \frac{ \Abs{ \cF(B_k)( \xi + (\bar \nu -1)\pi))}^2 }{ \Abs{ \cF(B_k)( \xi)}^2 }  &\textup{for } -\tilde \nu < \xi < \tilde \nu  \\
                                                                                   \frac{ \Abs{ \cF(B_k)( \xi + (\bar \nu -1)\pi))}^2 }{ \Abs{ \cF(B_k)( \xi)}^2 }   &\textup{for } \xi > |\tilde \nu| \pi \\
                                                                                   0 &\textup{else}
                                                                                   \end{cases},
 \nonumber
 \end{align}
 where $\tilde \nu = \bar \nu - \nu - 1 \in (-1 ; 1)$ has been inserted. Since $(\bar \nu -1)\pi$ is necessarily an integer-multiple of $2\pi$, we may again use the relation \eqref{eq:thm-SplineQuasiBandlimit-pf5}  to simplify $w_{k, 0}$:
 \begin{align}
  w_{k, 0}(\xi) &:=  \begin{cases}
                      \frac{\xi^{2(k+1)}}{ (\xi - (\bar \nu -1)\pi)^{2(k+1)}}  &\textup{for } \xi \leq - |\tilde \nu| \pi \\
                      \frac{\xi^{2(k+1)}}{ (\xi - (\bar \nu -1)\pi)^{2(k+1)}} + \frac{\xi^{2(k+1)}}{ (\xi + (\bar \nu -1)\pi)^{2(k+1)}} &\textup{for } -\tilde \nu < \xi < \tilde \nu  \\
                      \frac{\xi^{2(k+1)}}{ (\xi + (\bar \nu -1)\pi)^{2(k+1)}}   &\textup{for } \xi > |\tilde \nu| \pi \\
                      0 &\textup{else}
                     \end{cases}, \label{eq:thm-SplineQuasiBandlimit-pf8}  
 \end{align}
  The function $w_{k, 0}$ can be readily seen to be smooth and convex on each of the intervals $[-\pi; -|\tilde \nu| \pi)$, $(-\tilde \nu \pi; \tilde \nu  \pi)$ and $(|\tilde \nu| \pi; \pi]$. Consequently, the supremum over $[-\pi; \pi]$ is attained at one of the six boundary points of these intervals. By the symmetry $w_{k, 0}(-\xi) = w_{k, 0}(\xi)$, it is furthermore sufficient to consider non-negative values of $\xi$.
  
  We first consider the case $\tilde \nu \in (-1;0]$. Then the interval $(-\tilde \nu \pi; \tilde \nu  \pi)$ is empty and the definition of $w_{k,0}$ simplifies accordingly so that $c_{k,0}$ can be computed as
  \begin{align}
   c_{k,0} &= \max \left\{  \lim_{\xi \searrow |\tilde \nu| \pi} w_{k, 0}(\xi) , \;  w_{k, 0}(\pi) \right\}  = \max \left\{  \frac{|\tilde \nu|}{ \Parens{ (\bar\nu-1) + |\tilde \nu| }   } , \; \frac 1 {\bar \nu }  \right\}^{ 2(k+1)}  = \frac 1 {\bar \nu ^{ 2(k+1)}}   \label{eq:thm-SplineQuasiBandlimit-pf9}  
  \end{align}
  for all $ \bar \nu \in 2\mN -1$.
 On the other hand, if $\tilde \nu \in (0;1)$, then also the interior domain-part $(-\tilde \nu \pi; \tilde \nu  \pi)$ has to be considered in the computation of the supremum:
  \begin{align}
   &c_{k,0} = \max \left\{  \lim_{\xi \nearrow \tilde \nu \pi} w_{k, 0}(\xi), \; \lim_{\xi \searrow \tilde \nu \pi} w_{k, 0}(\xi) , \;  w_{k, 0}(\pi) \right\}  \nnl
   &= \max \left\{  \frac{\tilde \nu^{ 2(k+1)}}{ \Parens{ (\bar\nu-1) + \tilde \nu } ^{ 2(k+1)} }+   \frac{\tilde \nu^{ 2(k+1)}}{ \Parens{ (\bar\nu-1) - \tilde \nu } ^{ 2(k+1)} } , \; \frac{\tilde \nu^{ 2(k+1)}}{ \Parens{ (\bar\nu-1) + \tilde \nu } ^{ 2(k+1)} } , \; \frac 1 {\bar \nu ^{ 2(k+1)}}  \right\} \nnl
   &\stackrel{(\nu = \bar \nu - 1 - \tilde \nu)}= \frac 1 {\bar \nu ^{ 2(k+1)}} +  \underbrace{ \max \left\{  \frac{\max\{\tilde \nu, 0\}^{ 2(k+1)}}{ \Parens{ \nu + 2\tilde \nu } ^{ 2(k+1)} }+   \frac{\max\{\tilde \nu, 0\}^{ 2(k+1)}}{   \nu   ^{ 2(k+1)} } - \frac 1 {\bar \nu ^{ 2(k+1)}} , \; 0  \right\} }_{c_{\band,0} ( k, \nu )}.  \label{eq:thm-SplineQuasiBandlimit-pf10}  
  \end{align}
  By comparing to \eqref{eq:thm-SplineQuasiBandlimit-pf9}, it can be seen that equality between the left-hand side and the bottom line of \eqref{eq:thm-SplineQuasiBandlimit-pf10} remains valid for $\tilde \nu \in (-1;0]$, i.e.\ holds true in general.

  By inserting \eqref{eq:thm-SplineQuasiBandlimit-pf4}, \eqref{eq:thm-SplineQuasiBandlimit-pf6}, \eqref{eq:thm-SplineQuasiBandlimit-pf7} and \eqref{eq:thm-SplineQuasiBandlimit-pf10} into \eqref{eq:thm-SplineQuasiBandlimit-pf2}, we finally arrive at
 \begin{align}
     \Norm{\cF(h)|_{\compl{(\nu\Xi_ r)}}}^2 &\leq \bbparens{c_{\band,0} ( k, \nu ) + \frac 1 {\bar \nu ^{ 2(k+1)}} +  \sum_{n = 1 +  \ceil{(\nu-1)/2}}^\infty c_{k,n}  } \Norm{\cF(h)|_{ \Xi_ r }}^2 \nnl
     &=   \bbparens{  c_{\band, 0} ( k, \nu )  + \! \sum_{n =  \ceil{(\nu-1)/2}}^\infty \frac 2 { \Parens{ 2n + 1 }^{2(k+1)} }  }   \Norm{\cF(h)|_{ \Xi_ r }}^2 \nnl
     &=   c_{\band} ( k, \nu )  \Norm{\cF(h)|_{ \Xi_ r }}^2 \stackrel{ \Xi_ r \subset \nu \Xi_ r}\leq  c_{\band} ( k, \nu )^2    \Norm{\cF(h)|_{ \nu \Xi_ r }}^2 \label{eq:thm-SplineQuasiBandlimit-pf11}  
 \end{align}
 The assertion now follows by exploiting that $\Norm{\cF(h)}^2 = \Norm{\cF(h)|_{\compl{(\nu\Xi_ r)}}}^2 + \Norm{\cF(h)|_{\nu\Xi_ r}}^2$:
  \begin{align}
    \CBand(k,\nu)^2 \Norm{\cF(h)}^2 &= \CBand(k,\nu)^2 \Norm{\cF(h)|_{ \nu\Xi_ r}}^2 + \CBand(k,\nu)^2  \Norm{\cF(h)|_{\compl{(\nu\Xi_ r)}}}^2 \nnl
    &\stackrel{\eqref{eq:thm-SplineQuasiBandlimit-pf11}}\geq \CBand(k,\nu)^2 \Parens{ \frac 1 {c_{\band} ( k, \nu )} + 1 } \Norm{\cF(h)|_{\compl{(\nu\Xi_ r)}}}^2  = \Norm{\cF(h)|_{\compl{(\nu\Xi_ r)}}}^2 
 \end{align}
 \vspace{.5em}
 
 \paragraph{Negative result for $\nu < 1$:} Now let $\nu < 1$. Then, by the theory of Fourier series, there exists a sequence $(b_j)_{j \in \mZ}$ such that
 \begin{align}
  \hat b_\nu (\xi) :=  \exp \Parens{ - \I r \xi  \bo }  \sum_{j \in \mZ} b_{j} \exp \Parens{ - \I r \xi  j } = \begin{cases}
                                                                               1 &\textup{if } \xi \in \Xi_r \setminus (\nu \Xi_r) \\
                                                                               0 &\textup{if } \xi \in \nu \Xi_r
                                                                              \end{cases}, \quad \xi \in [-\pi; \pi] 
 \end{align}
 in an $L^2$-sense. If we define $h := \sum_{j\in \mZ} b_j B_k(\cdot/r - j - \bo)$ as the corresponding B-spline, then the periodic part of $\cF(h)$  in \eqref{eq:thm-PrincipalLeakage-1} is given by $\hat h _{\textup{per}} = \hat b_\nu$. Hence, it follows that $\supp(\cF(h)) \cap [-\pi;\pi] = \supp(\hat b_\nu) \subset \compl{(\nu \Xi_r)}$, i.e.\ $\cF(h) = \cF(h)|_{\compl{(\nu \Xi_r)}}$.
 The constructed example shows that no non-trivial bound of the form \eqref{eq:thm-SplineQuasiBandlimit-1} may hold true for $\nu < 1$.
 \end{proof}
\vspace{1em}

\bibliographystyle{siamplain}
\bibliography{/home/simon/SharedObjects/literature.bib}

\end{document}